\newtheorem{thm}{Theorem}[section]
\newtheorem{lem}[thm]{Lemma}
\newtheorem{prop}[thm]{Proposition}
\newtheorem{cor}[thm]{Corollary}
\theoremstyle{definition}\newtheorem{df}[thm]{Definition}
\theoremstyle{definition}\newtheorem{rem}[thm]{Remark}
\theoremstyle{definition}
\renewcommand{\phi}{\varphi}
\newcommand{\N}{\mathbb{N}}
\newcommand{\Z}{\mathbb{Z}}
\newcommand{\R}{\mathbb{R}}
\newcommand{\C}{\mathbb{C}}
\newcommand{\T}{\mathbb{T}}
\newcommand{\hm}{homomorphism}
\newcommand{\dt}{\delta}
\newcommand{\ep}{\epsilon}
\newcommand{\andeqn}{\,\,\,{\rm and}\,\,\,}
\newcommand{\rforal}{\,\,\,{\rm for\,\,\,all}\,\,\,}
\newcommand{\CA}{$C^*$-algebra}
\newcommand{\SCA}{$C^*$-subalgebra}
\newcommand{\af}{{\alpha}}
\newcommand{\bt}{{\beta}}
\newcommand{\beq}{\begin{eqnarray}}
\newcommand{\eneq}{\end{eqnarray}}
\newcommand{\tforal}{\,\,\,\text{for\,\,\,all}\,\,\,}
\newcommand{\tand}{\,\,\,\text{and}\,\,\,}
\title{Approximately Diagonalizing Matrices Over $C(Y)$
}
\author{Huaxin Lin
 }
\date{}
\begin{document}

\maketitle

\begin{abstract}
Let $X$ be a compact metric space which is locally absolutely
retract and let $\phi: C(X)\to C(Y, M_n)$ be a unital \hm, where $Y$
is a compact metric space with ${\rm dim}Y\le 2.$ It is proved that
there exists a sequence of $n$ continuous maps $\af_{i,m}: Y\to X$
($i=1,2,...,n$) and a sequence of sets of mutually orthogonal rank
one projections $\{p_{1, m}, p_{2,m},...,p_{n,m}\}\subset C(Y, M_n)$
such that
$$
\lim_{m\to\infty} \sum_{i=1}^n f(\af_{i,m})p_{i,m}=\phi(f) \rforal f\in C(X).
$$
 This is closely related to the Kadison diagonal matrix question.
It is also shown that this approximate diagonalization could not
hold in general when ${\rm dim}Y\ge 3.$
\end{abstract}

\section{Introduction}
Over two decades ago, Richard Kadison proved that a normal element
in $M_n({\cal N}),$ where ${\cal N}$ is a von-Neumann algebra, can
be diagonalized (\cite{RK1} and \cite{RK2})).  He showed that this
cannot be done if ${\cal N}$ is replaced by a unital \CA\, in
general. He then asked  what topological properties of a compact
metric space $Y$ will guarantee that
 every normal element in $M_n(C(Y))$ can always be diagonalized.
Karsten Grove and Gert K. Pedersen \cite{GP} showed that this could
not go very far. They demonstrated that $Y$ has to be sub-Stonean
and ${\rm dim}Y\le 2$ if every self-adjoint element can be
diagonalized in $M_n(C(Y)). $ Furthermore, they showed that, even
for sub-Stonean spaces $Y$ with ${\rm dim}Y\le 2,$ one still could
not diagonalize a normal element in general. In fact, they showed
that in order to have every normal element in $M_n(C(Y))$ to be
diagonalized, one must have that every finite covering space over
each closed subset of $Y$ is trivial and every complex line bundle
over each closed subset of $Y$ is trivial, in addition to  the
requirements that $X$ is sub-Stonean and ${\rm dim} Y\le 2.$ So not
every sub-Stonean space $X$ with dimension at most two has the
property that every normal element can be diagonalized. Since
sub-Stonean spaces are not the every-day topological space with
dimension at most two, it seems that the question of diagonalizing
normal elements in $M_n(C(Y))$ has a rather negative answer.

However, in the decades after the original question was raised and
answered, it seems that approximately diagonalizing some normal
elements or some commutative \SCA s in $M_n(C(Y)),$ where $Y$ is a
lower dimensional nice topological space, becomes quite useful and
important. In this paper, instead of considering exact
diaogonalization of commutative \SCA s in $M_n(C(Y)),$ we study the
problem whether a unital \hm\, $\phi: C(X)\to M_n(C(Y))$ can be
approximately diagonalized. To be precise, we formulate as follows:
Let $\ep>0$ and a compact set ${\cal F}\subset C(X)$ be given. Are
there continuous maps $\af_i: Y\to X$ ($1\le i\le n$) and mutually
orthogonal rank one projections $p_1, p_2,...,p_n\in M_n(C(Y))$ such
that
\beq\label{I1}
\|\phi(f)-\sum_{i=1}^n f(\af_i)p_i\|<\ep\tforal f\in {\cal F}?
\eneq
Here rank one projections are those projections in $M_n(C(Y))$ for
which every fiber has rank one. Note that we do not insist that
$p_1, p_2,...,p_n$ are equivalent. Note also that if $a\in
M_n(C(Y))$ is a normal element and if $sp(a)=X,$ a compact subset of
the plane, then $a$ induces a unital  \hm\, $\phi: C(X)\to
M_n(C(Y))$ by defining $\phi(f)=f(a)$ for all $f\in C(X).$ However,
we study the general case that  $X$ is a compact metric space.

It has been known that those unital \hm s which can be approximately
diagnalized are very useful, for example, in the study of inductive
limits of homogeneous \CA s, a subject has profound impact in the
program of classification of amenable \CA s, otherwise known as the
Elliott program.

The main result that we report here is that the answer to (\ref{I1})
is affirmative for any compact metric space $X$ which is locally
absolutely retract (see \ref{LAR}) and any compact metric space $Y$
with ${\rm dim} Y\le 2.$ Moreover, we show that, the answer is
negative for general compact metric space $Y$ with ${\rm dim}\ge 3.$
In fact, a unitary in $M_2(C(S^3))$ may not be approximately
diagonalized. We also show that if ${\rm dim} Y>3,$ then there are
self-adjoint elements with spectrum $[0,1]$ in $M_n(C(Y))$ which can
not be  approximately diagonalized. For more general compact metric
space $X,$ we show that, for any $\ep>0,$ any compact subset ${\cal
F}\subset C(X),$ there is a unital commutative diagonal \SCA\,
$B\subset M_n(C(Y))$ such that
$$
{\rm dist}(\phi(f), B)<\ep\tforal f\in {\cal F},
$$
provided that ${\rm dim Y}\le 2$
(see \ref{M1}).

As expected, when one studies unital \hm s from $C(X)\to M_n(C(Y)),$
one often  needs to make some perturbation. The trouble  arises when
one tries to approximate an `almost \hm' by a \hm. This is
already problematic when $Y$ is just a point. Suppose that $X$ is a
compact subset of the plane and $L: C(X)\to M_n$ is unital positive
linear map which is almost multiplicative. It was first proved by
W.A.J. Luxembourg and F.R. Taylor \cite{LT}, using non-standard
analysis, that such maps can be approximated by \hm s. Theorem
\ref{d0semi} generalizes this to the case that $X$ is any compact
metric space. Theorem \ref{d1semi} shows that the same statement
holds when $M_n$ is replaced by $C([0,1], M_n).$

The paper is organized as follows. In section two, we present a
theorem which generalizes the early result of Luxembourg and Taylor
mentioned above. In section three, we collect some easy facts. Lemma
\ref{d0uniq} serves as a uniqueness theorem in finite dimensional
\CA s and Lemma \ref{frombk} may be viewed as an elementary version
of the so-called Basic Homotopy Lemma (see \cite{BEEK} and
\cite{Lnmemoir}). In section four, we provide a uniqueness theorem
for \hm s from $C(X)$ into $C([0,1], M_n)$ which extends Lemma
\ref{d0uniq}. In section five, using the results in the previous
sections, we first show that an ``almost \hm" from $C(X)$ into
$C([0,1], M_n)$ is close to a true \hm, further generalizing the
theorem of Luxembourg and Taylor. Then we present a version of Basic
Homotopy Lemma in $C([0,1], M_n).$  Section six contains the main
result which based on the previous sections, in particular, the
uniqueness theorem in section four and the Basic Homotopy Lemma in
section five. Finally, in section seven, we show that one should not
expect same results when ${\rm dim Y}\ge 3.$

{\bf Acknowledgements}: Most of this work was done when the author
was in East China Normal University in the summer 2009. This work was
partially supported by a NSF grant, Changjiang Professorship from
ECNU and Shanghai Priority Academic Disciplines.

\section{Approximate \hm s}

\begin{lem}\label{d0finite}
Let $X$ be a compact metric space and let $n\ge 1$ be an integer.
Let $H: C(X)\to C^b({\mathbb N},M_n)/C_0({\mathbb N}, M_n)$ be a unital \hm.
Then there exists an infinite  subsequence $S\subset  \N$ such that
the induced \hm\, $H': C(X)\to C^b(S,M_n)/C_0(S,M_n)$ has finite spectrum.

\end{lem}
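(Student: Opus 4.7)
The plan is to lift $H$ to a sequence of completely positive maps into $M_n$ and then extract a subsequence whose limit is a genuine \hm. Since $C(X)$ is nuclear, the Choi--Effros lifting theorem provides a unital completely positive map $\widetilde{H}: C(X)\to C^b(\N,M_n)$ with $\pi\circ\widetilde{H}=H,$ where $\pi$ is the quotient map. Writing $\widetilde{H}(f)=(\phi_k(f))_{k\in\N}$ produces unital completely positive maps $\phi_k: C(X)\to M_n,$ and the fact that $H$ is multiplicative translates into the asymptotic multiplicativity
$$
\lim_{k\to\infty}\|\phi_k(fg)-\phi_k(f)\phi_k(g)\|=0 \rforal f,g\in C(X).
$$

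Next I would exploit the finite-dimensionality of $M_n.$ Fix a countable dense subset $\{f_j\}\subset C(X).$ For each $j,$ the sequence $(\phi_k(f_j))_{k\in\N}$ lies in the norm-compact ball of radius $\|f_j\|$ inside $M_n,$ so a standard diagonal argument yields an infinite subsequence $S\subset\N$ such that $\lim_{k\in S}\phi_k(f_j)$ exists in norm for every $j.$ Because each $\phi_k$ is contractive, this convergence extends to every $f\in C(X),$ and we may define $\phi(f):=\lim_{k\in S}\phi_k(f).$

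The limit map $\phi: C(X)\to M_n$ is unital and completely positive (as a pointwise norm limit of u.c.p.\ maps), and the asymptotic multiplicativity together with joint norm-continuity of multiplication in $M_n$ forces $\phi(fg)=\phi(f)\phi(g).$ Hence $\phi$ is a unital $*$-\hm, so its image is a commutative unital \SCA\ of $M_n,$ which is automatically finite dimensional; equivalently $\phi$ factors through $C(F)$ for some finite set $F\subset X$ with $|F|\le n.$

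Finally, the induced \hm\ $H': C(X)\to C^b(S,M_n)/C_0(S,M_n)$ sends $f$ to the class of $(\phi_k(f))_{k\in S},$ which by construction differs from the constant sequence $(\phi(f))_{k\in S}$ by an element of $C_0(S,M_n).$ Consequently $H'$ factors as $C(X)\xrightarrow{\phi} M_n\hookrightarrow C^b(S,M_n)/C_0(S,M_n)$ (the second arrow embedding as constant sequences), and the spectrum of $H'$ coincides with that of $\phi,$ namely the finite set $F.$ The only delicate ingredient is the initial Choi--Effros lift; once it is in place, everything else is a soft compactness/diagonalization made available by the finite-dimensionality of the fibre $M_n.$
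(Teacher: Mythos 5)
Your proposal is correct, and it takes a genuinely different route from the paper's. The paper's argument is an inductive construction in the corona algebra $A_0=C^b(\mathbb N,M_n)/C_0(\mathbb N,M_n)$: one picks a point $\xi_1$ in the spectrum of $H$, applies Pedersen's theorem on corona algebras (Th.\ 15 of \cite{P}) to get a nonzero annihilator of $\overline{H(I_1)A_0H(I_1)}$, uses real rank zero of $A_0$ to extract a projection $p_1$ there acting as a point mass at $\xi_1$, passes to the infinite subset where a lift $\{p_1(m)\}$ is nonzero, and iterates; the process terminates after at most $n$ steps because $M_n$ cannot contain more than $n$ mutually orthogonal nonzero projections. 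Your argument instead lifts $H$ once via Choi--Effros to $(\phi_k)_{k\in\mathbb N}$, uses norm-compactness of bounded balls in $M_n$ together with separability of $C(X)$ and a diagonal extraction to produce a subsequence $S$ along which $\phi_k$ converges pointwise in norm to a unital $*$-\hm\, $\phi:C(X)\to M_n$, and then observes $H'=\iota\circ\phi$ where $\iota$ is the unital embedding of $M_n$ as constant sequences, so $\ker H'=\ker\phi$ and the spectrum of $H'$ is the (finite) spectrum of $\phi$. Both proofs lean on the finite-dimensionality of $M_n$, but in different places: yours through norm-compactness of the fibre, the paper's through the bound on orthogonal projections. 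Yours is shorter and avoids the corona/real-rank-zero machinery; moreover it yields for free a canonical lifting of the spectral projections (constant sequences $\iota(p_j)$), which the paper separately has to arrange when Lemma 2.1 is invoked in the proof of Theorem 2.2. The paper's method is more explicit about how the spectral points and projections are built one at a time and is closer in flavour to arguments used elsewhere in that circle of ideas, but for the statement at hand your compactness argument is a clean alternative.
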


\begin{proof}
Put $A_0=C^b({\mathbb N},M_n)/C_0({\mathbb N}, M_n).$
Denote by $\pi: C^b({\mathbb N},M_n)\to A_0$  the quotient map.
Let $\xi_1\in X$ be a point in the spectrum of $H.$
Let
$$
I_1=\{f\in C(X): f(\xi_1)=0\}.
$$
Then $\overline{H(I_1)A_0H(I_1)}$ is a $\sigma$-unital hereditary
\SCA\, and it is not $A_0,$ since $\xi_1$ is in the spectrum of $H$
and $H(I_1)$ is a proper closed ideal of $H(C(X)).$ Note that $A_0$
is the corona algebra of the separable \CA\, $C_0({\mathbb N},
M_n).$ It follows from a theorem of G.Pedersen (Th.15 of \cite{P})
that $\overline{H(I_1)A_0H(I_1)}^{\perp}\not=\{0\}.$ Since
$\overline{H(I_1)A_0H(I_1)}^{\perp}$ is a hereditary \SCA\, of $A_0$
and $A_0$ has real rank zero, there is a non-zero projection $p_1\in
\overline{H(I_1)A_0H(I_1)}^{\perp}.$ It follows that
\beq\label{d0finite-1}
H(f)p_1=f(\xi_1)p_1\tforal f\in C(X).
\eneq
It is standard that there exists a sequence of projections $\{p_1(m)\}\subset M_n$ such that
$\pi(\{p_{1}(m)\})=p_{1}.$ Let $S_1\subset \mathbb N$ be the subsequence so that $p_{1}(m)\not=0$ for all $m\in S_1.$
Note that $S_1$ must be infinite.
Let $A_1=A_0/J_1,$ where
$$
J_1=\{\{a_m\}\in C^b({\mathbb N}, M_n): a_m=0\rforal m\in S_1\}.
$$
One also has that $A_1\cong C^b(S_1, M_n)/C_0(S_1, M_n).$
Let $\Phi_1: A_0\to A_1$ be the quotient map and define $H_1=\Phi_1\circ H.$
If $\xi_1$ is the only point in the spectrum of $H_1,$ the lemma follows. Otherwise, let $\xi_2\not=\xi_1$ be another point
in the spectrum of $H_1.$
Let
$$
I_2=\{f\in C(X): f(\xi_2)=0\}.
$$
From the above argument, one obtains a nonzero projection $p_2\in
\overline{H_1(I_2)A_1H_1(I_2)}^{\perp}.$ Then $\Phi_1(p_1)p_2=0$ and
\beq\label{d0finite-2}
H_1(f)p_2=f(\xi_2)p_2\rforal f\in C(X).
\eneq
There exists a projection $\{p_2(m)\}\in C^b(S_1, M_n)$
such that $\pi_1(\{p_2(m)\})=p_2$ and
\beq\label{d0finite-3}
p_2(m)p_1(m)=0
\eneq
for all $m,$  where $\pi_1: C^b(S_1,M_n)\to A_1$ is the
quotient map. Let $S_2\subset S_1$ be such that $p_2(m)\not=0$ for
all $m\in S_2.$ Then $S_2$ is an infinite subset. Let
$$
J_2=\{\{a_m\}\in C^b(S_1,M_n): a_m=0\rforal m\in S_2\}.
$$
Put $A_2=A_1/J_2$ and let $\Phi_2: A_1\to A_2$ be the quotient map.
Note that $A_2\cong C^b(S_2,M_n)/C_0(S_2,M_n).$ Moreover,
\beq\label{d0finite-4}
p_1(m)\not=0\andeqn p_2(m)\not=0 \tforal m\in S_2.
\eneq
Define $H_2=\Phi_2\circ H_1.$ Then $\xi_1, \xi_2$ are in the
spectrum of $H_2.$ If the spectrum of $H_2$ contains only these two
points, the lemma follows. Otherwise, we continue. However, since
there can be no more than $n$ mutually orthogonal non-zero
projections in $M_n,$ from (\ref{d0finite-3}) and
(\ref{d0finite-4}), this process has to stop at the stage $n$ or
earlier. At that point, one obtains an infinite subset $S\subset
\mathbb N,$ for which $H': C(X)\to C^b(S,M_n)/C_0(S,M_n)$ has finite
spectrum.

\end{proof}

\begin{thm}\label{d0semi}
Let $X$ be a compact metric space, let $n\ge 1$ be an integer and
let $M>0.$  For any $\ep>0$ and any finite subset ${\cal F}\subset
C(X),$ there exists $\dt>0$ and a finite subset ${\cal G}\subset
C(X)$ satisfying the following: for any unital map  $\phi: C(X)\to
M_n$  with $\|\phi(f)\|\le M$ for all $ f\in C(X)$ with $\|f\|\le 1$
such that
\beq\label{d0semi-1}
\|\phi(\lambda_1 x+ \lambda_2 y)-(\lambda_1\phi(x)+\lambda_2\phi(y))\|<\dt,\\
\|\phi(xy)-\phi(x)\phi(y)\|<\dt\tand \|\phi(x^*)-\phi(x)^*\|<\dt
\eneq
for all $\lambda_1,\lambda_2\in {\mathbb C}$ with $|\lambda_i|\le 1$
($i=1,2$)
 and $x, y\in {\cal G},$ there exists a unital
\hm\, $\psi: C(X)\to M_n$ such that
\beq\label{d0semi-2}
\|\phi(f)-\psi(f)\|<\ep\tforal f\in {\cal G}.
\eneq

\end{thm}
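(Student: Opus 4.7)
The plan is to argue by contradiction: package a putative sequence of counterexamples into a genuine unital $*$-homomorphism into a corona-style algebra and then invoke Lemma \ref{d0finite} to extract a uniformly close finite-spectrum approximation.

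Suppose the conclusion fails for some $\ep_0>0$ and finite $\mathcal{F}_0\subset C(X)$. Fix a countable dense $*$-subalgebra $B\subset C(X)$ containing $\mathcal{F}_0\cup\{1\}$ and enumerate its unit ball as $\{b_1,b_2,\dots\}$ with $\mathcal{F}_0\subset\{b_1,\dots,b_{m_0}\}$. For each $m\ge m_0$, pick a unital map $\phi_m:C(X)\to M_n$ bounded by $M$ on the unit ball that is $(1/m)$-linear, $(1/m)$-multiplicative, and $(1/m)$-$*$-preserving on $\mathcal{G}_m:=\{b_1,\dots,b_m\}$, yet admits no unital \hm\ $\psi:C(X)\to M_n$ with $\|\phi_m(f)-\psi(f)\|<\ep_0$ for every $f\in\mathcal{F}_0$.

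Define $\Phi_0$ on the unit ball of $B$ by $\Phi_0(b)(m):=\phi_m(b)$, taking values in $C^b(\N,M_n)$, and extend by linearity to $\Phi_0:B\to C^b(\N,M_n)$. Let $\pi:C^b(\N,M_n)\to A_0:=C^b(\N,M_n)/C_0(\N,M_n)$ be the quotient map and set $H_0:=\pi\circ\Phi_0$. Any fixed pair $b_i,b_j$ eventually lies in $\mathcal{G}_m$, so the linearity, multiplicativity, and $*$-defects of $\Phi_0$ land in $C_0(\N,M_n)$; hence $H_0:B\to A_0$ is a unital $*$-homomorphism. Being contractive, it extends uniquely by density and continuity to a unital $*$-homomorphism $H:C(X)\to A_0$. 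Apply Lemma \ref{d0finite} to $H$: there is an infinite $S\subset\N$ such that the induced $H':C(X)\to A_S:=C^b(S,M_n)/C_0(S,M_n)$ has finite spectrum $\{\xi_1,\dots,\xi_k\}$ with $k\le n$, i.e.\ $H'(f)=\sum_{i=1}^k f(\xi_i)q_i$ for mutually orthogonal projections $q_i\in A_S$ summing to $1$.

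Lift $\{q_i\}$ to mutually orthogonal projections $q_i(m)\in M_n$ with $\sum_i q_i(m)=1$ for all $m\in S$ off a finite set, via standard projection-lifting (semiprojectivity of $\C^k$) applied to the surjection $C^b(S,M_n)\to A_S$. Set $\psi_m(f):=\sum_{i=1}^k f(\xi_i)q_i(m)$, a genuine unital \hm\ $C(X)\to M_n$. For every $f\in\mathcal{F}_0$, both sequences $\{\phi_m(f)\}_{m\in S}$ and $\{\psi_m(f)\}_{m\in S}$ represent $H'(f)$ in $A_S$, so $\|\phi_m(f)-\psi_m(f)\|\to 0$ along $S$; choosing $m\in S$ large enough then contradicts the defining property of $\phi_m$. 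The main difficulty is the assembly step---verifying that $\Phi_0$ truly induces a $*$-homomorphism on all of $C(X)$ (handled by the exhaustion $\mathcal{G}_m$ eventually absorbing any pair from $B$ together with uniform boundedness) and simultaneously lifting the orthogonal family $\{q_i\}$ to orthogonal projections in each fiber $M_n$ via projection-lifting in $C^b(S,M_n)$.
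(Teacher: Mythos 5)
Your proof is correct and follows essentially the same route as the paper: argue by contradiction, package the failing sequence into a unital $*$-homomorphism $H:C(X)\to C^b(\N,M_n)/C_0(\N,M_n)$, invoke Lemma~\ref{d0finite} to pass to an infinite subset on which $H$ has finite spectrum, lift the mutually orthogonal projections fiberwise, and read off a contradiction from the resulting finite-spectrum homomorphisms $\psi_m$. The only substantive difference is that you make explicit the step the paper elides, namely how the negation of the statement (which only controls $\phi$ on a growing chain of finite sets $\mathcal{G}_m$ with tolerances $1/m$) actually yields a sequence whose multiplicativity, linearity and $*$-defects vanish asymptotically on all of $C(X)$: you diagonalize through a countable dense $*$-subalgebra $B$ and then extend $H$ by uniform boundedness and density. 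The paper instead asserts directly that the limits in~(\ref{d0semi-3}) and~(\ref{d0semi-4}) hold for all $x,y\in C(X)$, leaving this reduction implicit; your version fills in that gap cleanly without changing the argument's structure.
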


\begin{proof}
Let $H(C(X), M_n)$ be the set of unital \hm s. Suppose that the theorem fails.
There exists $\ep_0>0$ and a finite subset ${\cal G}_0\subset C(X)$ with the following properties:

There exists a sequence of unital maps $\phi_k: C(X)\to M_n$ with
$\|\phi_k(f)\|\le M$ for all $\|f\|\le 1$ ($k=1,2,...$) such that
\beq\label{d0semi-3}
\lim_{k\to\infty}\|\phi_k(\lambda_1x+\lambda_2y)-(\lambda_1\phi_k(x)+\lambda_2\phi_k(y))\|=0,\\\label{d0semi-4}
\lim_{k\to\infty}\|\phi_k(xy)-\phi_k(x)\phi_k(y)\|=0\tand
\lim_{k\to\infty}\|\phi_k(x^*)-\phi_k(x)^*\|=0
\eneq
for all $x, y\in C(X)$ and $\lambda_1, \lambda_2\in {\mathbb C}$ with $|\lambda_i|\le 1,$ $i=1,2,$ but
\beq\label{d0semi-5}
\inf_{\psi\in H(C(X), M_n)}\{\inf_k\sup\{\|\phi_k(f)-\psi(f)\|: f\in
{\cal G}_0\}\}\ge \ep_0,
\eneq
where $H(C(X), M_n)$ is the set of unital \hm s from $C(X)$ to
$M_n.$  Let $A=C^b({\mathbb N},M_n),$ $I=C_0({\mathbb N},M_n)$ and
let $\pi: A\to A/I$ be the quotient map. Define $L: C(X)\to A$ by
$L(f)=\{\phi_k(f)\}_{k\in {\mathbb N}}$ for $f\in C(X)$ and
$H=\pi\circ L.$ From (\ref{d0semi-3}) and (\ref{d0semi-4}), $H:
C(X)\to A/I$ is a unital \hm.  It follows from \ref{d0finite} that
there exists an infinite subset $S\subset \mathbb N$ such that $H_1:
C(X)\to C^b(S,M_n)/C_0(S,M_n)$ defined by $H_1=\Phi\circ H$ has
finite spectrum, where $\Phi: A\to A_1=A/I_1$ is the quotient map
and where
$$
I_1=\{\{a_m\}\in C^b({\mathbb N},M_n): a_m=0\rforal m\in S\}.
$$
By passing to a subsequence, without loss of generality,  one may assume
that $H$ has finite spectrum. Therefore there are mutually orthogonal projections $\{p_1, p_2,...,p_K\}\subset A$ such that
\beq\label{d0semi-6}
H(f)=\sum_{j=1}^K f(\xi_j)p_j\rforal f\in C(X),
\eneq
where $\{\xi_1, \xi_2,...,\xi_k\}\subset X$ is a finite subset.
There are mutually orthogonal projections $P_1, P_2,...,P_K\in
C^b({\mathbb N},M_n)$ such that $\pi(P_j)=p_j,$ $j=1,2,...,K.$
Let $P_j=\{q_j(m)\},$ where each $q_j(m)$ is a projection and
$q_j(m)q_i(m)=0,$ if $i\not=j,$ $i,j,=1,2,...,K.$ Define $\psi_m:
C(X)\to M_n$ by $\psi_m(f)=\sum_{j=1}^K f(\xi_j)q_j(m)$ for all
$f\in C(X).$ Then $\pi\circ \{\psi_m\}=H.$ It follows that
\beq\label{d0semi-7}
\lim_{m\to\infty}\|\phi_m(f)-\psi_m(f)\|=0\rforal f\in C(X).
\eneq
Hence, there exists an integer $N\ge 1$ such that
$\|\phi_m(f)-\psi_m(f)\|<\ep_0/2\tforal f\in {\cal F}.$
This contradicts with (\ref{d0semi-5}). The lemma follows.

\end{proof}

\begin{cor}\label{ntuple}
Let $k, n\ge 1$ be two integers and let $\ep>0.$ Then there exists
$\dt>0$ satisfying the following: Suppose that $x_1, x_2,...,x_k\in
M_n$ are $k$ self-adjoint elements with $\|x_i\|\le 1$
($i=1,2,...,k$) for which
$$
\|x_ix_j-x_jx_i\|<\dt,\,\,\,i=1,2,...,k.
$$
Then there are $k$ self-adjoint elements $y_1,y_2,...,y_k\in M_n$
with $\|y_i\|\le 1$ ($i=1,2,...,k$) such that
\beq\label{ntuple-1}
y_iy_j=y_jy_i\tand \|x_i-y_i\|<\ep,\,\,\,i=1,2,...,k.
\eneq
\end{cor}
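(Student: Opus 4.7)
I would argue by contradiction, mirroring the sequence-algebra device used in Theorem \ref{d0semi}. Suppose the corollary fails; then for some $\ep_0>0$ one can find self-adjoint contractions $x_i^{(m)}\in M_n$ ($i=1,\dots,k$, $m\in\mathbb N$) with $\lim_{m\to\infty}\|x_i^{(m)}x_j^{(m)}-x_j^{(m)}x_i^{(m)}\|=0$ for all $i,j$, while no commuting $k$-tuple of self-adjoint contractions in $M_n$ lies within $\ep_0$ of $(x_1^{(m)},\dots,x_k^{(m)})$ for any $m$.

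Set $A=C^b(\mathbb N,M_n)$, $I=C_0(\mathbb N,M_n)$, with quotient map $\pi:A\to A/I$, and let $X_i=\{x_i^{(m)}\}_m\in A$. The elements $\pi(X_1),\dots,\pi(X_k)$ are self-adjoint contractions in $A/I$ that pairwise commute, so they generate a commutative unital $C^*$-subalgebra, giving a unital \hm\ $h:C([-1,1]^k)\to A/I$ with $h(z_i)=\pi(X_i)$, where $z_i$ denotes the $i$-th coordinate function on $[-1,1]^k$. By Lemma \ref{d0finite} there is an infinite $S\subset\mathbb N$ such that the induced \hm\ $h':C([-1,1]^k)\to A_S:=C^b(S,M_n)/C_0(S,M_n)$ has finite spectrum $\{\xi_1,\dots,\xi_K\}\subset [-1,1]^k$; thus $h'(z_i)=\sum_{j=1}^K z_i(\xi_j)p_j$ for mutually orthogonal projections $p_1,\dots,p_K\in A_S$.

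Next, exactly as in the proof of Lemma \ref{d0finite}, I would lift the $p_j$'s to projections $\{q_j(m)\}_{m\in S}\subset M_n$ that are mutually orthogonal in every fiber ($q_i(m)q_j(m)=0$ for $i\ne j$ and all $m\in S$). Defining $y_i^{(m)}:=\sum_{j=1}^K z_i(\xi_j)q_j(m)$ produces self-adjoint elements sitting in the commutative $C^*$-subalgebra generated by $q_1(m),\dots,q_K(m)$, hence pairwise commuting, and with $\|y_i^{(m)}\|\le\max_j|z_i(\xi_j)|\le 1$. Moreover their class in $A_S$ equals $h'(z_i)$, which coincides with the image of $X_i$, so $\{y_i^{(m)}-x_i^{(m)}\}_{m\in S}\in C_0(S,M_n)$ and $\|y_i^{(m)}-x_i^{(m)}\|\to 0$ along $S$. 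For sufficiently large $m\in S$ this contradicts the failure hypothesis.

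The main content lies in the conversion step: turning a nearly commuting tuple in $M_n$ into a genuinely commuting family in the corona $A/I$, which then admits the finite-spectrum reduction of Lemma \ref{d0finite}. Once that reduction is in hand, the fiberwise-orthogonal projection lifting (handled iteratively, as in the proof of Lemma \ref{d0finite}) and the definition of the $y_i^{(m)}$ are essentially bookkeeping, so I do not foresee any serious obstacle beyond the constructions already present in this section.
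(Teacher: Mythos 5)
Your argument is correct and lands in essentially the same place as the paper's, but it takes a somewhat more direct route. The paper's sketch defines the homomorphism $\phi: C(\Omega)\to A/I$ on the complex polydisc $\Omega$, invokes the Choi--Effros lifting theorem \cite{CE} to produce a unital completely positive lift $L: C(\Omega)\to A$, and then applies Theorem~\ref{d0semi} to the fiber maps $L_m$ (which for large $m$ are almost multiplicative) to obtain approximating homomorphisms $\phi_m$. You instead apply Lemma~\ref{d0finite} directly to the genuine homomorphism $h: C([-1,1]^k)\to A/I$, reduce to finite spectrum along a subsequence $S$, lift the mutually orthogonal projections fiberwise (exactly the kind of lift already used inside the proofs of \ref{d0finite} and \ref{d0semi}), and write down the commuting tuple $y_i^{(m)}=\sum_j z_i(\xi_j)q_j(m)$ explicitly. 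This avoids the Choi--Effros step entirely, uses the real cube $[-1,1]^k$ (which is the natural joint spectrum here and makes the self-adjointness and norm bound of the $y_i^{(m)}$ automatic), and is arguably a cleaner unwinding of the phrase ``this follows from \ref{d0finite} as in the proof of \ref{d0semi}'' that the paper itself uses. The only point worth stating with a little more care is the simultaneous lift of $p_1,\dots,p_K$ to fiberwise orthogonal projections in $C^b(S,M_n)$: this is standard (and is asserted in the proof of \ref{d0semi}), but since you are not citing \ref{d0semi}, you should either cite the standard lifting fact for orthogonal projections in corona algebras of real rank zero or note that it can be done iteratively exactly as in the construction inside the proof of \ref{d0finite}.
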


\begin{proof}
This follows from \ref{d0finite} as in the proof of \ref{d0semi}.
One  sketches here. If the corollary fails, there would be an
 $\ep_0>0$ and a sequence of $k$ self-adjoint
elements $\{x_j^{(m)}\},$ $j=1,2,...,k,$ such that
\beq\label{cnt-1}
\lim_{m\to \infty}\|x_j^{(m)}x_i^{(m)}-x_i^{(m)}x_j^{(m)}\|=0,\,i,j=1,2,...,k, \andeqn\\
\inf\{\inf_m\{\max\{\|x_j^{(m)}-y_j\|: 1\le j\le m\}\}\}\ge \ep_0,
\eneq
where the outside infimum is taken among all possible commuting
$k$-tuple of self-adjoint elements $\{y_j: 1\le j\le k\}$ in $M_n.$
Let $z_j=\pi(\{y_j^{(m)}\})\in C^b({\mathbb N}, M_n),$ where  $\pi:
C^b({\mathbb N}, M_n) \to C^b({\mathbb N}, M_n)/C_0({\mathbb N},
M_n)$ is the quotient map. Then $z_jz_i=z_iz_j,$ $i,j=1,2,...,k.$
Let $\Omega=\{(r_1,r_2,...,r_k)\in \C^k: |r_j|\le 1, 1\le j\le k\}.$
Define $\phi: C(\Omega)\to C^b({\mathbb N}, M_n)/C_0({\mathbb N},
M_n)$ by $\phi(f)=f(z_1,z_2,...,z_k)$ for all $f\in C(\Omega).$ It
follows from \cite{CE} that there exists a unital completely
positive linear map $L: C(\Omega)\to C^b({\mathbb N}, M_n)$ such
that $\pi\circ L=\phi.$ One may write $L(f)=\{L_m(f)\}_{m\in
{\mathbb N}}.$ One then applies \ref{d0semi} to $L_m$ (for all
sufficiently large $m$) to obtain unital \hm s $\phi_m: C(\Omega)\to
M_n$ such that $\lim_{m\to\infty}\|\phi_m(f)-L_m(f)\|=0$ for all
$f\in C(\Omega).$ A contradiction would be reached as in the proof
of \ref{d0semi}.


\end{proof}

\begin{rem}
 Note that, in \ref{d0semi}, $\dt$
depends on $X,$ $\ep$ as well as $n.$  In Corollary \ref{ntuple},
$\dt$ depends on both $k$ and $n.$ In the case that $k=2,$ the
theorem for which $\dt$ does not depend on $n$ was first proved in
\cite{LnFields}. That result is much deeper and was false if $k\ge
3$ (see \cite{GL}).
\end{rem}

\section{Commutative \SCA s of matrix algebras}


\begin{lem}\label{d0uniq}
Let $X$ be a compact metric space and let $n\ge 1$ be an integer.
Then, for any $\ep>0$ and any finite subset ${\cal F}\subset C(X),$ there exist a finite subset ${\cal G}$ and
$\dt>0$ satisfying the following. Let $\phi, \psi: C(X)\to M_n$ be two unital \hm s for which
\beq\label{d0uniq-1}
|{\rm tr}\circ \phi(g)-{\rm tr}\circ \psi(g)|<\dt\tforal g\in {\cal G},
\eneq
where ${\rm tr}$ is the normalized tracial state. Then there exists a unitary $u\in M_n$ such that
\beq\label{d0uniq-2}
\|{\rm ad}\, u\circ \phi(f)-\psi(f)\|<\ep\tforal f\in {\cal F}.
\eneq
\end{lem}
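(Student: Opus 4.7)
The plan is to argue by contradiction, paralleling the compactness-and-subsequence strategy used in the proofs of Theorem \ref{d0semi} and Corollary \ref{ntuple}, and to exploit the rigid structure of unital \hm s from $C(X)$ into $M_n$.

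The key structural observation is the following: any unital \hm\ $\phi: C(X)\to M_n$ admits a spectral decomposition $\phi(f)=\sum_{i=1}^n f(x_i)p_i$ with $x_1,\dots,x_n\in X$ (counted with multiplicity) and mutually orthogonal rank-one projections $p_1,\dots,p_n\in M_n$. Such a \hm\ is determined up to unitary equivalence by the multiset $\{x_1,\dots,x_n\}$, equivalently by the atomic probability measure $\mu_\phi=\tr\circ\phi=\frac{1}{n}\sum_i\delta_{x_i}$ on $X$. Two such \hm s are unitarily equivalent in $M_n$ iff these multisets coincide; so what must be quantified is the fact that closeness of the traces forces the multisets to be close in Hausdorff-like distance.

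Assuming the lemma fails, one extracts $\ep_0>0$, a finite ${\cal F}_0\subset C(X)$, and sequences of unital \hm s $\phi_k,\psi_k:C(X)\to M_n$ such that $|\tr\circ\phi_k(g)-\tr\circ\psi_k(g)|\to 0$ for every $g$ in a countable dense subset of $C(X)$ (so, by a standard bounded-Lipschitz argument, for every $g\in C(X)$), while
$$\inf_{u\in U(M_n)}\ \max_{f\in {\cal F}_0}\|\operatorname{ad}u\circ\phi_k(f)-\psi_k(f)\|\ \ge\ \ep_0.$$
Using the spectral decomposition, write $\phi_k(f)=\sum_i f(x_i^{(k)})p_i^{(k)}$ and $\psi_k(f)=\sum_j f(y_j^{(k)})q_j^{(k)}$. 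Because $\operatorname{ad}u$ leaves the displayed infimum invariant, we may conjugate each $\phi_k$ (resp.\ $\psi_k$) by a suitable unitary so that the $p_i^{(k)}$ (resp.\ $q_j^{(k)}$) become the standard matrix units $e_{ii}$. By compactness of $X^n$, pass to a subsequence so that $x_i^{(k)}\to x_i$ and $y_j^{(k)}\to y_j$; the conjugated \hm s then converge pointwise to $\tilde\phi(f)=\sum_i f(x_i)e_{ii}$ and $\tilde\psi(f)=\sum_j f(y_j)e_{jj}$.

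In the limit the trace hypothesis becomes $\sum_i f(x_i)=\sum_j f(y_j)$ for all $f\in C(X)$; by Urysohn's lemma the multisets $\{x_i\}$ and $\{y_j\}$ coincide, so there is a permutation $\sigma$ with $x_i=y_{\sigma(i)}$, and the corresponding permutation unitary $v\in M_n$ satisfies $\operatorname{ad}v\circ\tilde\phi=\tilde\psi$. Uniform continuity of each $f\in{\cal F}_0$ together with $x_i^{(k)}\to x_i=y_{\sigma(i)}\leftarrow y_{\sigma(i)}^{(k)}$ then gives $\|\operatorname{ad}v\circ\tilde\phi_k(f)-\tilde\psi_k(f)\|\to 0$ for every $f\in{\cal F}_0$. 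Absorbing $v$ and the initial diagonalizing unitaries into a single unitary in $M_n$ produces the required approximate conjugacy for $\phi_k$ and $\psi_k$ at large $k$, contradicting the lower bound $\ep_0$.

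The only real technical point is the simultaneous diagonalization and limit extraction; this is harmless because $U(M_n)$ is a compact Lie group and the quantity being bounded below is unitary-invariant. The heart of the argument is the rigidity statement that equality of traces on $C(X)$ forces equality of the underlying multisets of eigenvalues, and the compactness argument is merely the standard device for converting this rigidity into the quantitative form stated in the lemma.
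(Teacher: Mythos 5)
Your proof is correct, but it takes a genuinely different route from the paper's. The paper argues directly and constructively: it fixes a uniform-continuity modulus $\eta$, chooses an $\eta/8$-dense set, and builds a concrete finite family of bump functions $g_F$ (indexed by subsets $F$ of the dense set) together with the explicit tolerance $\dt=1/2n$; closeness of traces on these test functions is then converted, via Hall's Marriage Theorem, into a permutation $\gamma$ with $\dist(\xi_i,\zeta_{\gamma(i)})<\eta$, from which the conjugating unitary is read off. Your argument instead negates the statement, extracts sequences $\phi_k,\psi_k$, simultaneously diagonalizes (using unitary invariance of both the trace hypothesis and the quantity to be bounded), passes to convergent subsequences of eigenvalue tuples in $X^n$, identifies the limiting multisets from equality of the limiting traces, and derives a contradiction. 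Both proofs are sound and rest on the same rigidity fact --- that $\tr\circ\phi$ determines a homomorphism $C(X)\to M_n$ up to conjugacy --- but they trade different things: your compactness argument is shorter and more soft, while the paper's argument is quantitative (explicit $\dt$ and $\cal G$) and, more importantly for the paper, explicitly produces the matching permutation $\gamma$ and the bound $\dist(\xi_i,\zeta_{\gamma(i)})<\eta$. That explicit matching is exactly what Lemma \ref{d0uniqhm} re-uses (``This follows from the proof of \ref{d0uniq}'') to build the connecting paths $\alpha_i$ inside small path-connected neighborhoods, so the constructive form is not incidental. Your proof establishes the stated lemma but would not, as written, feed into \ref{d0uniqhm} without extracting the permutation-and-distance information separately.
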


\begin{proof}
Let $\eta>0$ be such that, for any $x, x'\in X,$
$$
|f(x)-f(x')|<\ep\rforal f\in {\cal F},
$$
provided ${\rm dist}(x,x')<\eta.$ Let $\{x_1, x_2,...,x_m\}$ be an
$\eta/8$-dense subset of $X.$ For each subset $F\subset
\{x_1,x_2,...,x_m\},$ define  $g_F\in C(X)$ to be a function with
$0\le g_F(x)\le 1$ for all $x\in X,$ $g_F(x)=1$ if $x\in O(F,
\eta/4),$ and $g_F(x)=0,$ if $x\not\in O(F,\eta/2).$

Let $\dt=1/2n$ and let ${\cal G}=\{g_F: F\subset
\{x_1,x_2,...,x_m\}\}.$  Suppose that $\phi$ and $\psi$ satisfy the
assumption for the above $\dt$ and ${\cal G}.$ Let $\{\xi_1,
\xi_2,...,\xi_n\}$ and $\{\zeta_1, \zeta_2,...,\zeta_n\}\subset X$
be such that
$$
\phi(f)=\sum_{i=1}^n f(\xi_i)p_i\andeqn \psi(f)=\sum_{i=1}^n
f(\zeta_i)q_i
$$
for all $f\in C(X),$ where $\{p_1, p_2, ...,p_n\}$ and $\{q_1,
q_2,...,q_n\}$ are two sets of mutually orthogonal rank one
projections. Fix  a  subset $S\subset \{\xi_1, \xi_2,...,\xi_n\}$ of
$k$ ($1\le k\le n$) elements. For each $\xi_j\in S,$ $\xi_j\in
B(x_i,\eta/8)$ for some $i.$  Let $F=\{ x_i: {\rm dist}(x_i,
S)<\eta/8\}.$ Then
\beq\label{d0uniq-3}
k/n &\le &\mu_{{\rm tr}\circ \phi}(O(S, \eta/8))\le \mu_{\rm
tr}\circ \phi(O(F,\eta/4))\\\label{d0uniq-3+}
 &\le & {\rm tr}\circ
\phi(g_F) < {\rm tr}\circ \psi(g_F)+\dt \le \mu_{{\rm tr}\circ
\psi}(O(F, \eta/2)) +1/2n.
\eneq
Note that
$$
\mu_{{\rm tr}\circ \psi}(O(F, \eta/2))=k_1/n
$$
for some non-negative integers $k_1.$  Thus, from (\ref{d0uniq-3})
and (\ref{d0uniq-3+}),
$$
k/n<k_1/n+1/2n.
$$
It follows that $k\le k_1.$ Thus
\beq\label{d0uniq-4}
\mu_{{\rm tr}\circ \phi}(O(S, \eta/8))\le  \mu_{{\rm tr}\circ
\psi}(O(F, \eta/2))
\eneq
for any $S\subset \{\xi_1,\xi_2,...,\xi_n\}$ of $k$ elements. This
implies that there is a finite subset $S_1\subset \{\zeta_1,
\zeta_2,...,\zeta_n\}$ of at least $k$ elements such that
$S_1\subset O(F,\eta/2).$
It follows that, for each $\zeta_j\in S_1,$ there is $x_i\in F$ such
that
$
\xi_i\in O(x_i, \eta/2).
$
By the definition of $F,$ there exists $\xi_l\in S$ such that
$\zeta_j\in O(\xi_l, 5\eta/8).$

In other words, $O(S, 5\eta/8)$ contains at least $k$ elements of
$\{\zeta_1, \zeta_2,...,\zeta_n\}.$
 Therefore, by the Marriage Law (see \cite{HV}),
 there exists  a permutation $\gamma: (1,2,...,n)\to (1,2,...,n)$ such that
 $$
 {\rm dist}(\xi_i, \zeta_{\gamma(i)})<\eta, \,\,\,i=1,2,...,n.
 $$
Let $U$ be the unitary such that
$
U^*p_iU=q_{\gamma(i)},\,\,\,i=1,2,...,n.
$
Then
$$
U^*\phi(f)U=\sum_{i=1}^n f(\xi_i)q_{\gamma(i)}\rforal f\in C(X).
$$
It follows that
$$
\|U^*\phi(f)U-\psi(f)\|<\ep\rforal f\in {\cal F}.
$$

\end{proof}


\begin{lem}\label{d0uniqhm}
Let $X$ be a compact metric space which is locally path connected
and let $n\ge 1.$ Then, for any $\ep>0,$ $\ep_1>0$ and any finite
subset ${\cal F}\subset C(X),$ there exist a finite subset ${\cal
G}$ and $\dt>0$ satisfying the following. Let $\phi, \psi: C(X)\to
M_n$ be two unital \hm s for which
\beq\label{d0uniqhm-1}
|{\rm tr}\circ \phi(g)-{\rm tr}\circ \psi(g)|<\dt\tforal g\in {\cal G},
\eneq
where ${\rm tr}$ is the normalized tracial state. Then there exists a unital \hm\, $\Phi: C(X)\to C([0,1], M_n)$ such that
\beq\label{d0unihm-2}
\Phi(f)(0)=\phi(f)\tand \|\Phi(f)(t)-\phi(f)\|<\ep\tforal f\in {\cal
F}\andeqn t\in [0,1],
\eneq
and there exists
a unitary $u\in M_n$ such that
\beq\label{d0uniqhm-3}
{\rm ad}\, u\circ \Phi(f)(1)=\psi(f) \tforal f\in C(X).
\eneq
Moreover, there are continuous maps $\af_i: [0,1]\to X$
$(i=1,2,...,n$) and mutually orthogonal rank one projections
$\{p_1,p_2,...,p_n\}\subset M_n$ such that
\beq\label{d0uniqhm-3+}
\Phi(f)=\sum_{i=1}^n f(\af_i)p_i\rforal f\in C(X)\andeqn \\
{\rm dist}(\af_i(t), \af_i(0))<\eta\tforal t\in [0,1],\,\,\,
i=1,2,...,n.
\eneq
\end{lem}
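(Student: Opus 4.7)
The plan is to promote Lemma \ref{d0uniq} from a statement about unitary equivalence of $\phi$ and $\psi$ at a single instant to a statement about a continuous path of homomorphisms running from $\phi$ to a unitary conjugate of $\psi$. The construction uses the local path connectedness of $X$ in an essential way.

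First I would choose $\eta>0$ small enough to force (i) $|f(x)-f(x')|<\ep$ for all $f\in{\cal F}$ whenever $\dist(x,x')<\eta$, (ii) $\eta<\ep_1$, and (iii) any two points of $X$ at distance less than $\eta$ can be joined by a continuous path of diameter less than $\ep_1$. Condition (iii) is available because $X$ is locally path connected and compact, so a standard Lebesgue-number argument applied to a finite cover by path-connected open sets furnishes a uniform modulus of path-connectedness. Feed this $\eta$ into Lemma \ref{d0uniq} to obtain a finite set ${\cal G}\subset C(X)$ and a tolerance $\dt>0$.

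Next, given $\phi,\psi$ satisfying the trace estimate on ${\cal G}$, I would invoke the proof of Lemma \ref{d0uniq} (not merely its statement) to extract decompositions
$$\phi(f)\,=\,\sum_{i=1}^n f(\xi_i)\,p_i\andeqn \psi(f)\,=\,\sum_{i=1}^n f(\zeta_i)\,q_i,$$
with $\{p_i\},\{q_i\}$ two sets of mutually orthogonal rank one projections in $M_n$, together with a permutation $\gamma$ of $\{1,\ldots,n\}$ satisfying $\dist(\xi_i,\zeta_{\gamma(i)})<\eta$ for every $i$, and a unitary $U\in M_n$ with $U^*p_iU=q_{\gamma(i)}$. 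Using (iii), for each $i$ pick a continuous path $\af_i:[0,1]\to X$ from $\xi_i$ to $\zeta_{\gamma(i)}$ of diameter less than $\ep_1$, so that $\dist(\af_i(t),\af_i(0))<\ep_1$ for every $t$.

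Finally, define $\Phi:C(X)\to C([0,1],M_n)$ by
$$\Phi(f)(t)\,=\,\sum_{i=1}^n f(\af_i(t))\,p_i,\qquad f\in C(X),\ t\in[0,1].$$
This is a unital homomorphism of exactly the diagonal form required in (\ref{d0uniqhm-3+}). Evaluating at $t=0$ gives $\phi$; the estimate $\|\Phi(f)(t)-\phi(f)\|<\ep$ on ${\cal F}$ is immediate from (i) and orthogonality of the $p_i$; and ${\rm ad}\,u\circ\Phi(f)(1)=\psi(f)$ holds with $u=U^*$, since $\Phi(f)(1)=\sum f(\zeta_{\gamma(i)})p_i$ while $U^*p_iU=q_{\gamma(i)}$ yields $U^*\Phi(f)(1)U=\sum f(\zeta_{\gamma(i)})q_{\gamma(i)}=\psi(f)$. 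The main obstacle is step one, namely extracting a uniform modulus of path-connectedness on the compact metric space $X$: producing a single $\eta>0$ such that every pair of points at distance less than $\eta$ can be joined by a short path. This is precisely where the hypothesis of local path connectedness enters; the discrete matching provided by Lemma \ref{d0uniq} does not need it, but the continuous interpolation producing the $\af_i$ would fail without it.
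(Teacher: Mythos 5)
Your overall strategy is the same as the paper's: extract the eigenvalue lists, the matching permutation $\gamma$, and the unitary from the \emph{proof} of Lemma \ref{d0uniq}, use local path connectedness plus compactness to obtain a uniform modulus of path connectedness, interpolate each $\xi_i$ to $\zeta_{\gamma(i)}$ by a short path, and define $\Phi$ as the diagonal homomorphism $\Phi(f)(t)=\sum_i f(\af_i(t))p_i$ in a single fixed frame of rank-one projections. The identification of the unitary as $u=U^*$ and the verification of $(\ref{d0uniqhm-3})$ are also as in the paper.

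There is, however, a genuine (if easily repaired) slip in the nesting of constants, and it breaks the $\ep$-estimate as written. Your condition (i) only controls $|f(x)-f(x')|$ when $\dist(x,x')<\eta$, but (iii) only gives interpolating paths of diameter less than $\ep_1$, and (ii) asserts $\eta<\ep_1$. Hence nothing prevents $\af_i(t)$ from escaping the $\eta$-ball around $\xi_i=\af_i(0)$, and the asserted consequence $\|\Phi(f)(t)-\phi(f)\|<\ep$ is not ``immediate from (i)'' --- it need not hold at all with these choices. The fix is to decouple the two roles of $\eta$: first pick a modulus $\eta_0>0$ such that $\dist(x,x')<\eta_0$ implies $|f(x)-f(x')|<\ep$ for all $f\in{\cal F}$; then choose $\eta>0$ so small that any two points at distance less than $\eta$ lie in a common path-connected subset of diameter less than $\min\{\eta_0,\ep_1\}$, and feed this smaller $\eta$ to Lemma \ref{d0uniq}. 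This is precisely the arrangement in the paper, whose $\eta_2=\min\{\ep_1,\eta\}$ plays the role of $\min\{\eta_0,\ep_1\}$ above, and it simultaneously gives the $\ep$-estimate on $\Phi$ and the required bound $\dist(\af_i(t),\af_i(0))<\ep_1$. With that correction your argument is correct and coincides with the paper's.
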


\begin{proof}
This follows from the proof of \ref{d0uniq}.  At the beginning of the proof of
\ref{d0uniq},  we may further require that there is $\eta_1>0$ such that  each open ball $B_{\eta_1}$ with radius $\eta_1$ is contained in a path connected neighborhood $Z_{\eta}\subset B_{\eta}.$
Let $\dt$ be as in the proof of \ref{d0uniq} and choose ${\cal G}$ as in the proof of \ref{d0uniq}
(but for $\eta_1$ instead of $\eta$).  Write
$\phi(f)=\sum_{i=1}^nf(\xi_i)p_i$ and $\psi(f)=\sum_{i=1}^n f(\zeta_i)q_i$ for all $f\in C(X)$ as in the proof of \ref{d0uniq}.
The proof of \ref{d0uniq} provides a permutation $\gamma: (1,2,...,n)\to (1,2,...,n)$ such that
\beq\label{d0uniqhm-4}
{\rm dist}(\xi_i, \zeta_{\gamma(i)})<\eta_1.
\eneq
Since each open ball $B(\xi_i, \eta_1)\subset Z_{i, \eta_2}$ for
some path connected neighborhood contained in $B(x_i, \eta_2),$
where $\eta_2=\min\{\ep_1, \eta\},$  there exist a continuous path
$\alpha_i: [0,1]\to Z_{i, \eta_2}\subset B(\xi, \eta_2)$ such that
$$
\alpha_i(0)=\xi_i\andeqn \alpha_i(1)=\zeta_i,\,\,\,i=1,2,...,n.
$$
Define $\Phi: C(X)\to C([0,1], M_n)$ by
$
\Phi(f)(t)=\sum_{i=1}^n f(\alpha_i(t))p_i\tforal f\in C(X)
$
and $t\in [0,1].$ Since $\alpha_i(t)\in Z_{i, \eta_2}\subset
B(\xi_i, \eta_2),$ one estimates that
\beq\label{d0uniqhm-5}
\|\Phi(f)(t)-\phi(f)\|<\ep\tforal f\in {\cal F}
\eneq
and $t\in [0,1].$ Moreover,
\beq\label{d0uniqhm-6}
\Phi(f)(1)=\sum_{i=1}^nf(\zeta_i)p_i\tforal f\in C(X).
\eneq
Since $\{q_i: i=1,2,...,n\}$ and $\{p_i: i=1,2,...,n\}$ are assumed to be two sets of mutually orthogonal
rank one projections, as in the proof of \ref{d0uniq}, one obtains a unitary $u\in M_n$ such that
\beq\label{d0uniqhm-7}
u^*\Phi(f)(1)u=\phi(f)\tforal f\in C(X).
\eneq

\end{proof}

\begin{lem}\label{frombk}
Let $\ep>0,$ $n\ge 1$ be an integer and $M>0.$ There exists $\dt>0$
satisfying the following: For any finite subset ${\cal F}\subset
M_n$ with $\|a\|\le M$ for all $a\in {\cal F}$ and a unitary $u\in
M_n$ such that
$$
\|ua-au\|<\dt\tforal a\in {\cal F},
$$
there exists a continuous path of unitaries $\{u(t):t\in
[0,1]\}\subset M_n$ with $u(0)=u$ and $u(1)=1$ such that
$$
\|u(t)a-au(t)\|<\ep\tforal a\in {\cal F}.
$$
Moreover,
$$
{\rm Length}(\{u(t)\})\le 2\pi.
$$
\end{lem}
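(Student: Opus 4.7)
The plan is to take $u(t)=\exp(i(1-t)h)$, where $h=h^{*}\in M_{n}$ is a lift of $u$ (i.e.\ $e^{ih}=u$) chosen with care. Apply the spectral theorem in $M_{n}$ to write
$$
u=\sum_{j=1}^{K}e^{i\theta_{j}}p_{j},
$$
with $K\le n$ distinct eigenvalues and mutually orthogonal spectral projections $p_{j}$. Since at most $n$ points of $S^{1}$ are occupied by ${\rm sp}(u)$, a pigeonhole argument yields a closed arc $J\subset S^{1}$ of arclength at least $2\pi/n$ disjoint from ${\rm sp}(u)$. Placing the branch cut of $\log$ at the midpoint of $J$ produces a continuous real-valued function $g$ on a neighborhood of ${\rm sp}(u)$, and I set $h:=g(u)=\sum_{j}\theta_{j}p_{j}$. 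After composing with a unimodular scalar rotation (which adds a real multiple of $I$ to $h$), one obtains a lift with $\|h\|\le 2\pi$.

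\textbf{The path and its length.} Define $u(t):=\exp(i(1-t)h)$ for $t\in[0,1]$. Then $u(0)=u$, $u(1)=1$, and
$$
{\rm Length}(\{u(t)\})=\int_{0}^{1}\|u'(t)\|\,dt=\|h\|\le 2\pi.
$$
The almost-commutation follows from the standard identity
$$
[e^{ish},a]=i\,e^{ish}\!\int_{0}^{s}e^{-irh}[h,a]e^{irh}\,dr,
$$
which gives $\|[u(t),a]\|\le\|h\|\cdot\|[h,a]\|\le 2\pi\,\|[h,a]\|$ for $a\in{\cal F}$ and $t\in[0,1]$; the lemma thus reduces to establishing a bound of the form $\|[h,a]\|\le C(n)\|[u,a]\|+\eta$ for arbitrarily small preassigned $\eta>0$.

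\textbf{The main obstacle.} This functional-calculus estimate is the crux. Its subtlety is that $\log$ is discontinuous on $S^{1}$: for $u={\rm diag}(e^{i(\pi-\zeta)},e^{-i(\pi-\zeta)})$ paired with the swap matrix $a=\bigl(\begin{smallmatrix}0&1\\1&0\end{smallmatrix}\bigr)$, one has $\|[u,a]\|\approx 2\zeta$ while the naive branch on $(-\pi,\pi]$ yields $\|[h,a]\|\approx 2\pi$. The pigeonhole gap is what rescues us: because the branch cut lies inside an arc of length $\ge 2\pi/n$ empty of ${\rm sp}(u)$, $g$ extends to a $C^{\infty}$ function $\tilde g$ on all of $S^{1}$ whose successive derivatives are bounded by explicit functions of $n$. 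Approximate $\tilde g$ uniformly on $S^{1}$ by a Laurent polynomial $p(z,\bar z)=\sum_{|k|\le N}c_{k}z^{k}$ with $\|\tilde g-p\|_{\infty}<\eta$ and $N=N(n,\eta)$, and combine the elementary induction bound $\|[u^{k},a]\|\le|k|\,\|[u,a]\|$ (from the Leibniz rule) with $\|a\|\le M$ to get
$$
\|[h,a]\|\le\Bigl(\sum_{|k|\le N}|k|\,|c_{k}|\Bigr)\|[u,a]\|+2M\eta.
$$
Choosing first $\eta$ small enough that $2M\eta<\epsilon/(4\pi)$ and then $\delta$ small enough that the first summand falls below $\epsilon/(4\pi)$ completes the proof. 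The principal difficulty is the operator-Lipschitz-type bound on $\|[h,a]\|$; everything else is bookkeeping around the spectral theorem.
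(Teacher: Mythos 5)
Your proof is correct and follows essentially the same route as the paper, which observes the spectral gap of length $\ge 2\pi/n$ and then cites Lemma~2.6.11 of \cite{Lnbk}; you have in effect reconstructed a self-contained proof of that cited lemma. The key points you identify (pigeonhole gap, branch of $\log$ placed in the gap, exponential path, Duhamel-type commutator identity, uniform-in-$u$ polynomial approximation of the branch by exploiting rotational covariance) are exactly what makes the result work, and the dependence of your $\dt$ only on $\ep$, $n$, and $M$ is legitimate because the smooth extension $\tilde g$ can be normalized so that $N$ and $\sum_{|k|\le N}|k||c_k|$ depend only on $n$ and $\eta$.

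Two small imprecisions worth flagging, neither of which is a genuine gap. First, the remark about ``composing with a unimodular scalar rotation'' to achieve $\|h\|\le 2\pi$ is muddled: adding a nonintegral multiple of $2\pi$ to $g$ would change $e^{ih}$ to $e^{ic}u\neq u$. What you actually want, and what works, is to shift the branch by an integer multiple of $2\pi$; since the $g$-values on $\mathrm{sp}(u)$ occupy an interval of length $\le 2\pi-2\pi/n$, such a shift already forces $\|h\|\le 2\pi-\pi/n<2\pi$. Second, the Duhamel identity you write gives $\|[e^{ish},a]\|\le |s|\,\|[h,a]\|\le\|[h,a]\|$ for $s\in[0,1]$, not $\|h\|\cdot\|[h,a]\|$; you have overestimated by a factor $\|h\|$, but since you then only use the weaker bound $2\pi\|[h,a]\|$ consistently through the final $\ep$-bookkeeping, the conclusion is unaffected. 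With those two sentences tidied up, your argument is a complete, self-contained substitute for the citation in the paper's one-line proof.
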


\begin{proof}
Note that $\T\setminus sp(u)$ contains an arc with length at least $2\pi/n.$ Thus the lemma follows immediately from
Lemma 2.6.11 of \cite{Lnbk}.

\end{proof}

\section{Commutative \SCA s in matrix algebras over one dimension spaces}
\begin{lem}\label{d1uniq}
Let $X$ be a path connected
finite CW
complex, let $C=C(X)$ and let $A=C([0,1],M_n).$ For any $\ep>0$ and
any finite subset ${\cal F}\subset C,$ there exists a finite subset
${\cal G}\subset C$ and $\dt>0$ satisfying the following: Let $\phi,
\psi: C\to A$ be two unital \hm s such that
\beq\label{d1uni-1}
|\tau\circ \phi(g)-\tau\circ \psi(g)|<\dt\tforal g\in {\cal G} \tand \tforal \tau\in T(A).
\eneq

Then there exists a unitary $U\in A$ such that
\beq\label{d1uni-2}
\|{\rm ad}\,U\circ \phi(f)-\psi(f)\|<\ep
\eneq
for all $f\in {\cal F}.$

Moreover, if, in addition, $\phi(f)(0)=\psi(f)(0),$ or
$\phi(f)(0)=\psi(f)(0)$ and $\phi(f)(1)=\psi(f)(1)$ for all $f\in
C(X),$ then one may assume that $U(0)=1_{M_n},$ or $U(0)=U(1)=1,$ respectively.
\end{lem}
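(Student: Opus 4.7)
The plan is to discretize $[0,1]$, apply the fiberwise uniqueness theorem \ref{d0uniq} at each node of a partition, and stitch the resulting pointwise unitaries together into a continuous unitary $U\in A$ using the elementary Basic Homotopy Lemma \ref{frombk}. First observe that every tracial state on $A=C([0,1],M_n)$ has the form $\tau_\mu(a)=\int_0^1\operatorname{tr}(a(t))\,d\mu(t)$ for some probability measure $\mu$, so the hypothesis (\ref{d1uni-1}) forces the pointwise estimate $|\operatorname{tr}(\phi(g)(t))-\operatorname{tr}(\psi(g)(t))|<\dt$ for every $t\in[0,1]$ and $g\in{\cal G}$ by taking $\mu=\delta_t$. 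To choose $\dt$ and ${\cal G}$, I would feed $\ep_1=\ep/8$ and ${\cal F}$ into \ref{d0uniq} to extract $(\dt_1,{\cal G}_1)$, and feed $\ep_2=\ep/4$, $n$, and $M=\max_{f\in{\cal F}}\|f\|$ into \ref{frombk} to extract a separate tolerance $\dt_2$. Set $\dt=\dt_1$ and ${\cal G}={\cal G}_1$.

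Given $\phi,\psi$ satisfying (\ref{d1uni-1}), use uniform continuity of $t\mapsto\phi(f)(t)$ and $t\mapsto\psi(f)(t)$ on $[0,1]$ to choose a partition $0=t_0<t_1<\cdots<t_N=1$ so fine that $\|\phi(f)(t)-\phi(f)(t_i)\|$ and $\|\psi(f)(t)-\psi(f)(t_i)\|$ are both less than $\min(\ep/16,\dt_2/4)$ whenever $t$ is in a subinterval adjacent to $t_i$ and $f\in{\cal F}$. At each node $t_i$, applying \ref{d0uniq} to the fiber homomorphisms $\phi_i:=\phi(\cdot)(t_i)$ and $\psi_i:=\psi(\cdot)(t_i)$ produces a unitary $u_i\in M_n$ with $\|u_i^*\phi_i(f)u_i-\psi_i(f)\|<\ep_1$ for $f\in{\cal F}$. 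Setting $W_i=u_{i-1}^*u_i$, one chains $u_i^*\phi_iu_i\approx\psi_i$ with $u_{i-1}^*\phi_iu_{i-1}\approx u_{i-1}^*\phi_{i-1}u_{i-1}\approx\psi_{i-1}\approx\psi_i$ to conclude that $W_i$ commutes with $\psi_i(f)$ up to $\dt_2$. Then \ref{frombk} delivers a continuous path of unitaries $w_i\colon[0,1]\to M_n$ from $1$ to $W_i$ which commutes with $\psi_i({\cal F})$ to within $\ep/4$. Define $U\in A$ piecewise by $U(t)=u_{i-1}w_i((t-t_{i-1})/(t_i-t_{i-1}))$ on $[t_{i-1},t_i]$; the identifications $w_i(0)=1$ and $w_i(1)=W_i$ make $U$ continuous, and a triangle-inequality chain of the form $U(t)^*\phi(f)(t)U(t)\approx U(t)^*\phi_i(f)U(t)\approx u_{i-1}^*\phi_i(f)u_{i-1}\approx\psi_i(f)\approx\psi(f)(t)$ then yields (\ref{d1uni-2}).

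For the moreover clauses, if $\phi(f)(0)=\psi(f)(0)$ for all $f$ then one may take $u_0=1_{M_n}$ at $t_0=0$, since the identity trivially intertwines equal homomorphisms, and this forces $U(0)=1_{M_n}$; the symmetric choice $u_N=1_{M_n}$ at $t_N=1$ handles the two-sided case. The main obstacle is a careful bookkeeping of tolerances: the subinterval oscillations of $\phi$ and $\psi$ must be small enough that simultaneously the near-commutator $[W_i,\psi_i(f)]$ sits inside the tolerance $\dt_2$ demanded by \ref{frombk} and the four-term triangle estimate on each subinterval stays below $\ep$, while the global $\dt$ and ${\cal G}$ produced for the conclusion depend only on $\ep,{\cal F},n$ --- the partition size $N$ is allowed to depend on $\phi,\psi$. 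Path-connectedness of the CW-complex $X$ does not enter my approach explicitly, which suggests the hypothesis is either needed for a cleaner argument via \ref{d0uniqhm} or is a convenient standing assumption; but the partition construction above rests only on \ref{d0uniq} and \ref{frombk}.
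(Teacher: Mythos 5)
Your overall approach---discretize $[0,1]$, apply Lemma \ref{d0uniq} at the nodes, and splice via Lemma \ref{frombk}---is exactly the paper's approach, and your reduction of (\ref{d1uni-1}) to the pointwise trace inequality via $\tau_{\delta_t}$ is correct. There is, however, a genuine mis-ordering of tolerances in the way you instantiate \ref{d0uniq}. You extract $\dt_2$ from \ref{frombk} (for output commutation $\ep/4$), but then feed only $\ep_1=\ep/8$---independent of $\dt_2$---into \ref{d0uniq}. Yet the near-commutator of $W_i=u_{i-1}^*u_i$ with $\psi_i(f)$ is bounded by roughly $2\ep_1+2\cdot(\text{oscillation})$; refining the partition shrinks only the oscillation terms, while the $2\ep_1$ contribution comes from the two applications of \ref{d0uniq} at $t_{i-1}$ and $t_i$ and is unaffected by the mesh. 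With $\ep_1=\ep/8$ this gives approximately $\ep/4$, which need not be $<\dt_2$: since $\dt_2$ is determined by \ref{frombk} and may be much smaller than $\ep$, the hypothesis of \ref{frombk} can fail, and the path $w_i$ need not exist. The fix is to feed $\ep_1=\min\{\ep/8,\dt_2/4\}$ (or some fraction of $\dt_2$) into \ref{d0uniq}, which is exactly what the paper does (it uses $\min\{\ep/8,\dt_0/4\}$ where $\dt_0$ plays the role of your $\dt_2$), and then take $\dt,{\cal G}$ for the lemma to be the output of \ref{d0uniq} for that smaller target. Your closing remark that the near-commutator must sit inside $\dt_2$ identifies the right constraint, but it is the node tolerance fed to \ref{d0uniq}, not the partition fineness, that controls it.

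On the hypothesis: you are right that path-connectedness of $X$ plays no role in this argument. The paper's setup nominally invokes \ref{d0uniqhm} (which carries a local-path-connectedness hypothesis) to obtain $\dt_1,{\cal G}_1$, but those are precisely the $\dt,{\cal G}$ of \ref{d0uniq}, and the pointwise application in the partition step is of \ref{d0uniq}, so the two proofs coincide here as well.
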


\begin{proof}
Without loss of generality, we may assume that ${\cal F}$ is in the
unit ball of $C(X).$ Put $d=2\pi/n.$ Let $\dt_0>0$ (in place of
$\dt$) be as required by Lemma 2.6.11 of \cite{Lnbk} for $\ep/4.$
Let $0<\dt_1<1/2n$ (in place of $\dt$) and ${\cal G}_1$ (in place of
${\cal G}$) associated with ${\cal F}$ and $\min\{\ep/8, \dt_0/4\}$
(in place of $\ep$) as required by \ref{d0uniqhm}.


Put $\ep_1=\min\{\ep/16, \dt_1/4, \dt_0/4\}.$ There exists $\eta>0$ such that
$$
|g(t)-g(t')|<\ep_1/2\tforal g\in \phi({\cal G}_0\cup {\cal F})\cup
\psi({\cal G}_0\cup {\cal F}).
$$
provided that $|t-t'|<\eta_1.$ Choose a partition of the interval:
$$
0=t_0<t_1<\cdots <t_N=1
$$
such that $|t_i-t_{i-1}|<\eta_1,$ $i=1,2,...,N.$ Then
\beq\label{d1uni-3}
\|\phi(f)(t_i)-\phi(f)(t_{i-1})\|<\ep_1\andeqn \|\psi(f)(t_i)-\psi(f)(t_{i-1})\|<\ep_1
\eneq
for all $f\in {\cal G}_0\cup {\cal F},$ $i=1,2,...,N.$
There are unitaries $U_i\in M_n$ and $\{x_{i,j}\}_{j=1}^n$ such that
$$
\phi(f)(t_i)=U_i^*\begin{pmatrix} f(x_{i,1}) & &\\
                       & \ddots &\\
                        && f(x_{i,n})\end{pmatrix} U_i,
                        $$
 $i=0,1,2,...,N.$     It follows from \ref{d0uniq} that there exists, for each $i,$  a unitary $W_i \in M_n$ such that
 \beq\label{d1uni-4}
\| W_i^* \phi(f)(t_i)W_i-  \psi(f)(t_i)\|<\min\{\ep/8, \dt_0/4\}
\eneq
for all $f\in {\cal G}_0\cup {\cal F}.$ We estimate, from
(\ref{d1uni-3}), that
\beq\label{d1uni-5}
\|\phi(f)(t_i)-W_{i-1}W_i^*\phi(f)(t_i)W_iW_{i-1}^*\|<\ep_1+2\min\{\ep/8, \dt_0/4\}\tforal f\in {\cal G}_0\cup {\cal F},
\eneq
$i=1,2,...,N.$ By the choice of $\ep_1$ and applying Lemma 2.6.11 of \cite{Lnbk},
we obtain $h_i\in (M_n)_{s.a.}$ such that $W_iW_{i-1}^*=\exp(\sqrt{-1} h_i),$
\beq\label{d1uni-6}
&&\|h_i\phi(f)(t_i)-\phi(f)(t_i) h_i\|<\ep/4\andeqn\\
&&\|\exp(\sqrt{-1} t h_i)\phi(f)(t_i)-\phi(f)(t_i)\exp(\sqrt{-1}t h_i)\|<\ep/4
\eneq
for all $f\in {\cal F}$ and  $t\in [0,1],$  $i=1,2,...,N.$ Thus one
obtains a continuous path of unitaries $\{Z(t): t\in [t_{i-1},
t_i]\}$ with $Z(t_{i-1})=1_{M_n},$ $Z(t_i)=W_iW_{i-1}^*,$ and
\beq\label{d1uni-6+}
\|Z^*(t)\phi(f)(t_i)Z(t)-\phi(f)(t_i)\|<\ep/4\tforal f\in {\cal F},
\eneq
for $t\in [t_{i-1}, t_i],$ $i=1,2,...,N.$ One can also apply
\ref{frombk} to obtain the path $Z(t).$ Define $W(t)=Z(t)W_{i-1}$
for $t\in [t_{i-1}, t_i],$ $i=1,2,...,N.$ Note that $W\in C([0,1],
M_n).$ Moreover, for $t\in [t_{i-1}, t_i],$
\beq\label{d1uni-7}
&& \hspace{-0.8in} \|W^*(t)\phi(f)(t)W(t) -\psi(f)(t)\|
\le \|W^*(t)\phi(f)(t)W(t)-W^*(t)\phi(f)(t_{i})W(t)\|\\
&&+\|W^*(t)\phi(f)(t_{i})W(t)-\psi(f)(t_i)\|+\|\psi(f)(t_i)-\psi(f)(t)\|\\
&< & \ep_1+\|W_{i-1}^*Z(t)^*\phi(f)(t_i)Z(t) W_{i-1}-\psi(f)(t_i)\|+\ep_1\\
&<& \ep_1+\ep/4+\|W_{i-1}^*\phi(f)(t_i)W_{i-1}-\psi(f)(t_i)\|+\ep_1\\
&<&2\ep_1+\ep/4+\ep_1+\|W_{i-1}^*\phi(f)(t_{i-1})W_{i-1}-\psi(f)(t_{i-1})\|+\ep_1\\
&<& 4\ep_1+\ep/4+\ep/4<\ep
\eneq
for all $f\in {\cal F},$ $i=1,2,...,N.$

Finally, if $\phi(f)(0)=\psi(f)(0)$ for all $f\in C(X),$  we choose $W_0=1_{M_n}.$ The above proof
shows that $W(0)=1_{M_n}.$ Moreover, if $\phi(f)(1)=\psi(f)(1),$ we choose $M_N=1_{M_n}.$ The above proof also shows
that $W(1)=1_{M_n}.$
\end{proof}

\begin{lem}\label{d1local}
Let $X$ be a connected
CW complex and let $n\ge 1.$
Fix a unital \hm\, $h_0: C(X)\to M_n$ given by
$$
h_0(f)=\sum_{i=1}^m f(\xi_i)e_i\tforal f\in C(X),
$$
where $\{\xi_1, \xi_2,...,\xi_m\}$ ($m\le n$) is a subset of $m$
distinct points in $X$ and $\{e_1,e_2,...,e_m\}$ is a set of
mutually orthogonal non-zero  projections.
 For any $\ep>0,$ there
exists $\dt>0$ and a finite subset ${\cal G}\subset C(X)$ satisfying
the following: Suppose that $Y$ is a connected compact metric
space, $\phi: C(X)\to C(Y, M_n)$ is a unital \hm\, and $y_0\in Y$
for which
\beq\label{d1local-1}
\phi(f)(y_0)=h_0(f)\tforal  f\in C(X),\\\label{d1local-2}
\|\phi(g)(y)-\phi(g)(y_0)\|<\dt\tforal g\in {\cal G},\,\,\, y\in Y
\eneq
and there are continuous maps $x_i: Y\to X$ ($i=1,2,...,n$) and
mutually orthogonal rank one projections $\{q_1,q_2,...,q_n\}\subset
C(Y,M_n)$ such that
\beq\label{d1local-2+}
\phi(f)=\sum_{j=1}^n f(x_j)q_j\tforal f\in C(X).
\eneq

Then, there is
a partition $\{S_1, S_2,...,S_m\}$ of
$\{1,2,...,n\}$
such that $\xi_i=x_j(y_0)$ for some $j\in S_i,$ ${\rm dist}(\xi_i,
x_j(y))<\ep$ for all $j\in S_i,$ $\lim_{y\to y_0}
x_j(y)=\xi_i\tforal j\in S_i,$
\beq\label{d1local-3}
\|e_i-\sum_{j\in S_i}q_j(y)\|<\ep\tforal y\in Y\tand \lim_{y\to
y_0}\sum_{j\in S_i}q_j(y)=e_i,
\eneq
$i=1,2,...,m.$

\end{lem}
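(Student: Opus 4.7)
The plan is to introduce test functions $g_i \in C(X)$ that sharply separate the points $\xi_i$, read off the candidate partition $\{S_i\}$ directly at $y_0$, and then propagate it to all of $Y$ via a spectral perturbation argument together with connectedness of $Y$. Concretely, I would fix $\ep' \le \ep$ so that the balls $B(\xi_i,\ep')$ are pairwise disjoint, and for each $i=1,\dots,m$ choose $g_i \in C(X)$ with $0 \le g_i \le 1$, $g_i \equiv 1$ on $B(\xi_i,\ep'/2)$, and $g_i \equiv 0$ outside $B(\xi_i,\ep')$. Set $\mathcal{G} = \{g_1,\dots,g_m\}$ and take $\dt < \ep/4$ (to be refined below). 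At $y=y_0$ the equation $\sum_j f(x_j(y_0))q_j(y_0) = \sum_i f(\xi_i)e_i$ holds for all $f \in C(X)$; testing against a separating Urysohn family forces $\{x_j(y_0)\}_{j=1}^n \subseteq \{\xi_1,\dots,\xi_m\}$, so that $S_i := \{j : x_j(y_0) = \xi_i\}$ gives a partition of $\{1,\dots,n\}$ satisfying $e_i = \sum_{j \in S_i} q_j(y_0)$.

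The central observation is that for each $y \in Y$,
\[
\phi(g_i)(y) = \sum_{j=1}^n g_i(x_j(y))\, q_j(y)
\]
is self-adjoint with spectrum (as a multiset) equal to $\{g_i(x_j(y))\}_{j=1}^n$, because the $q_j(y)$ are pairwise orthogonal rank-one projections summing to $1_{M_n}$. Since $\phi(g_i)(y_0)=e_i$, the hypothesis gives $\|\phi(g_i)(y) - e_i\| < \dt$, and since $e_i$ has eigenvalues only at $0$ and $1$, Weyl's perturbation inequality forces every $g_i(x_j(y))$ into $[0,\dt] \cup [1-\dt,1]$, with precisely $|S_i|$ of them in $[1-\dt,1]$. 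Put $T_i(y) := \{j : g_i(x_j(y)) > 1-\dt\}$. Disjointness of the supports of $g_i$ and $g_k$ for $i \ne k$ yields $T_i(y) \cap T_k(y) = \emptyset$, and since $\sum_i |T_i(y)| = \sum_i |S_i| = n$, the collection $\{T_i(y)\}_{i=1}^m$ partitions $\{1,\dots,n\}$. Continuity of each composition $g_i \circ x_j$ makes the condition $j \in T_i(y)$ open in $y$; the partition property then makes each $\{y : j \in T_i(y)\}$ clopen, and connectedness of $Y$ forces $T_i(y) \equiv T_i(y_0) = S_i$ on all of $Y$.

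To finish: for $j \in S_i$, the strict inequality $g_i(x_j(y)) > 1 - \dt > 0$ together with $\operatorname{supp}(g_i) \subseteq B(\xi_i,\ep')$ forces $x_j(y) \in B(\xi_i,\ep') \subseteq B(\xi_i,\ep)$. For $j \notin S_i$, the point $x_j(y)$ lies in some $B(\xi_k,\ep')$ disjoint from $\operatorname{supp}(g_i)$, so $g_i(x_j(y)) = 0$; hence $\phi(g_i)(y) = \sum_{j \in S_i} g_i(x_j(y))\, q_j(y)$ and orthogonality of the $q_j(y)$ gives $\|\phi(g_i)(y) - \sum_{j \in S_i} q_j(y)\| \le \dt$. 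Combining with $\|\phi(g_i)(y) - e_i\| < \dt$ yields $\|e_i - \sum_{j \in S_i} q_j(y)\| < 2\dt < \ep$, uniformly in $y$. The two limit statements as $y \to y_0$ follow from continuity of $x_j \in C(Y,X)$ (giving $x_j(y) \to x_j(y_0) = \xi_i$ for $j \in S_i$) and of $q_j \in C(Y,M_n)$, combined with $\sum_{j \in S_i} q_j(y_0) = e_i$. I expect the spectral Weyl step to be the main obstacle: it is what turns a single norm estimate into the rigid two-band structure of $\{g_i(x_j(y))\}_j$ needed to run the connectedness argument, and without this rigidity the identification of $T_i(y)$ as a locally constant partition could not be carried out.
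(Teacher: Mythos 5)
Your proof is correct, and it takes a route that is related to but genuinely distinct from the paper's. The paper invokes the rigidity built into the proof of Lemma~\ref{d0uniq} (which ultimately rests on the Marriage lemma): a norm bound $\|\phi(g)(y)-h_0(g)\|<\dt_1$ on a suitable finite family yields a pointwise decomposition with a distance estimate $\mathrm{dist}(\xi_i,x_j(y))<\eta/2$ for $j\in S_i(y)$, and then the partition $\{S_i(y)\}$ is shown to be constant by an intermediate-value argument applied to $y\mapsto\mathrm{dist}(x_j(y),\xi_i)$ (a gap argument on a connected $Y$); the projection estimate is obtained by evaluating $\phi$ on Urysohn functions $f_i$ and using the hypothesis. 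You replace the invocation of Lemma~\ref{d0uniq} with a direct spectral argument: since $\phi(g_i)(y)=\sum_j g_i(x_j(y))q_j(y)$ with $\{q_j(y)\}$ a resolution of the identity by rank-one projections, the $n$ numbers $g_i(x_j(y))$ are exactly the eigenvalues, and Weyl's inequality applied against $e_i$ forces the two-band structure; constancy of $T_i(\cdot)$ then follows from the clopenness of $\{y:g_i(x_j(y))>1-\dt\}$ (which is open by continuity and closed because of the spectral gap), and the projection estimate drops out of orthogonality. Your version is somewhat more self-contained --- it needs only Weyl and connectedness rather than the uniqueness-theorem machinery --- and it correctly uses only connectedness of $Y$ (matching the statement), whereas the paper's written argument says ``path connected.'' Both approaches hinge on the same two ideas: using Urysohn bump functions to localize at each $\xi_i$, and using connectedness of $Y$ to propagate the combinatorial partition from $y_0$ to all of $Y$. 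One small caveat: you should require $\dt<\min\{\ep/4,1/2\}$ rather than merely $\dt<\ep/4$ so that the Weyl bands $[0,\dt)$ and $(1-\dt,1]$ are actually disjoint; with that tweak everything goes through.
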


\begin{proof}
Let
$$
\eta_0=\min\{{\rm dist}(\xi_i, \xi_j): i\not=j, i, j\in
\{1,2,...,m\}\}.
$$

Fix any $\eta>0$ for which $\eta<\min\{\ep, \eta_0/2\}.$
From the proof of \ref{d0uniq}, there is
$\dt_1>0$ and a finite subset ${\cal G}_1\subset C(X)$ satisfying
the following: if
$$
\|\phi'(g)-h_0(g)\|<\dt_1\tforal g\in {\cal G}
$$
for any unital \hm\, $\phi' :C(X)\to M_n,$ then $\phi'$ may be written as
$$
\phi'(f)=\sum_{i=1}^n f(x_i')p_i'\tforal f\in C(X),
$$
where $\{x_1'x_2', ...,x_n'\}\subset X$ and $\{p_1', p_2,'...,p_n'\}$ is a set of mutually orthogonal rank one projections and
$
{\rm dist}(\xi_i, x_j')<\eta/2,
$
for $j\in S_i,$ where $\{S_1,S_2,...,S_m\}$ is a partition
of $\{1,2,...,n\}.$

Choose $f_i\in C(X)_+$ with $f_i(x)\le 1$ such that $f_i(x)=1$ if ${\rm
dist}(x, \xi_i)<\eta/2$ and $f_i(x)=0$ if ${\rm dist}(x,\xi_i)\ge
\eta.$  Put ${\cal G}={\cal G}_1\cup {\cal F}\cup\{1, f_i:
i=1,2,...,m\}.$ Let $\dt=\min\{\ep/4, \eta/2, \dt_1/2\}.$ Now if
$\phi: C(X)\to C(Y, M_n)$ is a unital \hm\, which satisfies
(\ref{d1local-1}) and (\ref{d1local-2}) for the above $\dt$ and
${\cal G}.$

For each $y\in Y,$  there is a partition
$S_1(y), S_2(y),...,S_m(y)$ of $\{1,2,...,n\}$
such that
\beq\label{d1local-10+}
\phi(f)(y)=\sum_{i=1}^m(\sum_{j\in S_i(y)}f(x_j(y))q_j(y))\tforal
f\in C(X),
\eneq
and ${\rm dist}(\xi_i, x_j)<\eta/2$ for all $j\in S_i(y),$
$i=1,2,...,m.$ Since $\eta<\eta_0/2,$ $i\in S_i(y)$ for
$i=1,2,...,m.$
Suppose that, for some $j\in \{1,2,...,n\},$ there are $y_1, y_2\in
Y$ with $y_1\not=y_2$ such that $j\in S_i(y_1)$ but $j\not\in
S_i(y_2).$  Then $j\in S_{i'}(y_2)$ and $i'\not=i.$

Thus
$$
{\rm dist}(\xi_i, x_j(y_1))<\eta/2\andeqn {\rm dist}(\xi_{i'},
x_j(y_2))<\eta/2.
$$
Note that ${\rm dist}(\xi_i, \xi_{i'})\ge \eta_0.$  Hence
$$
{\rm dist}(\xi_i, x_j(y_2))\ge \eta_0-\eta/2\ge
\eta_0-\eta_0/4=3\eta_0/4.
$$
Since $Y$ is path connected, there should be a point $y_3\in Y$ such that
$$
{\rm dist}(x_j(y_3),\xi_i)=\eta_0/2>\eta/2.
$$
But ${\rm dist}(x_j(y_3),\xi_l)<\eta/2$ for some $l\not=i.$ However,
$$
{\rm dist}(\xi_i, \xi_l)\le {\rm dist}(\xi_i, x_j(y_3))+{\rm
dist}(x_j(y_3), \xi_l)<\eta_0/2+\eta/2<3\eta_0/4
$$
A contradiction. Therefore, if $j\in S_i(y),$ then, for all $y\in
Y,$ $j\in S_i(y).$ This implies that $S_i(y)=S_i$ is independent of
$y.$ The above also implies that $\xi_i=x_j(y_0)$ for some $j\in
S_i.$  The continuity of $x_j(y)$ also forces
$$
\lim_{y\to y_0} x_j(y)=\xi_i
$$
for all $j\in S_i,$ $i=1,2,...,m.$

To finish the proof, one notes that, for each $y\in Y,$
\beq\label{d1local-11}
\phi(f_i)(y_0)=e_i\andeqn \phi(f_i)(y)=\sum_{j\in
S_i}q_j(y),\,\,\,i=1,2,...,m.
\eneq
Therefore
\beq\label{d1local-12}
\|e_i-\sum_{j\in S_i}q_j\|<\eta,\,\,\,i=1,2,...,m.
\eneq
Furthermore, by (\ref{d1local-11}),
\beq\label{d1local-13}
\lim_{y\to y_0}\sum_{j\in S_i}q_j(y)=\lim_{y\to y_0}\phi(f_i)(y)=
\phi(f_i)(y_0)=e_i,\,\,\, i=1,2,...,m.
\eneq

\end{proof}

\begin{df}
Let $X$ be a compact metric space, let $n\ge 1$ be an integer and
let $t\in X$ be a point. In what follows, denote by $\pi_t: C(X,
M_n)\to M_n$ the point-evaluation  \hm\, defined by $\pi_t(f)=f(t)$
for all  $f\in C(X).$
\end{df}

\begin{lem}\label{d1twopt}
Let $X$ be a locally path connected compact metric space and let
$n\ge 1$ be an integer. Then, for any $\ep>0,$ $\eta>0$ and any
finite subset ${\cal F}\subset C(X),$ there exist $\dt>0$ and a
finite subset ${\cal G}\subset C(X)$ satisfying the following: if
$\phi_1,\,\phi_2: C(X)\to M_n$ are two   unital \hm s for which
\beq\label{d1twpt-1}
\|\phi_1(g)-\phi_2(g)\|<\dt\tforal g\in {\cal G},
\eneq
then there is a unital \hm\, $\Phi: C(X)\to C([0,1], M_n)$ and there
are  continuous maps $\af_i: [0,1]\to X$ ($1\le i\le n$) and
mutually orthogonal rank one projections $\{p_1,
p_2,...,p_n\}\subset C([0,1], M_n)$ such that $\pi_0\circ
\Phi=\phi_1, \,\,\, \pi_1\circ \Phi=\phi_2,$
\beq\label{d1twopt-2}
\Phi(f)=\sum_{i=1}^n g(\af_i)p_i\tforal g\in C(X)\tand\\
\|\pi_t\circ \Phi(f)-\phi_1(f)\|<\ep\tforal f\in {\cal F}.
\eneq
Moreover,
\beq\label{d1twopt-3}
{\rm dist}(\af_i(t),\af_i(0))<\ep\tforal f\in C(X)\tand\tforal t\in
[0,1].
\eneq

\end{lem}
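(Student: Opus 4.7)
The plan is to combine the spectral pairing argument from the proof of Lemma~\ref{d0uniq} with the local path-connectedness of $X$ and Lemma~\ref{frombk}, so as to simultaneously interpolate the spectral points and conjugate the rank-one projections along a path.

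First, write $\phi_i(f)=\sum_{j=1}^n f(\xi_j^{(i)})p_j^{(i)}$ for $i=1,2$, with mutually orthogonal rank-one projections $p_j^{(i)}\in M_n$. Choose ${\cal G}$ to contain the bump-type functions used in the proof of~\ref{d0uniq} (for a mesh finer than any preassigned scale) together with~${\cal F}$, and take $\dt$ small. The hypothesis $\|\phi_1(g)-\phi_2(g)\|<\dt$ implies in particular $|{\rm tr}\circ\phi_1(g)-{\rm tr}\circ\phi_2(g)|<\dt$, so the proof of~\ref{d0uniq} produces a permutation $\gamma$ of $\{1,\dots,n\}$ with ${\rm dist}(\xi_j^{(1)},\xi_{\gamma(j)}^{(2)})<\eta_1$ for a preassigned small $\eta_1$. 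Fix any unitary $V\in M_n$ with $V^*p_j^{(1)}V=p_{\gamma(j)}^{(2)}$ for all $j$. Then
\[
V^*\phi_1(f)V=\sum_j f(\xi_{\gamma^{-1}(j)}^{(1)})p_j^{(2)},
\]
which lies within $\max_j|f(\xi_{\gamma^{-1}(j)}^{(1)})-f(\xi_j^{(2)})|$ of $\phi_2(f)$; combined with $\|\phi_1(f)-\phi_2(f)\|<\dt$, this shows $\|V\phi_1(f)-\phi_1(f)V\|$ is small for each $f\in{\cal F}$.

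Next, by local path-connectedness of $X$, pick continuous maps $\af_j\colon[0,1]\to X$ with $\af_j(0)=\xi_j^{(1)}$, $\af_j(1)=\xi_{\gamma(j)}^{(2)}$, and ${\rm dist}(\af_j(t),\af_j(0))<\ep$ throughout. Apply Lemma~\ref{frombk} to $V$ and the finite set $\phi_1({\cal F})\subset M_n$ to obtain a continuous path of unitaries $\{V(t):t\in[0,1]\}$ with $V(0)=1$, $V(1)=V$, and $\|V(t)a-aV(t)\|<\ep/3$ for every $a\in\phi_1({\cal F})$. Define
\[
\Phi(f)(t)=V(t)^*\Bigl(\sum_{j=1}^n f(\af_j(t))p_j^{(1)}\Bigr)V(t),
\]
set $p_j(t)=V(t)^*p_j^{(1)}V(t)$, and observe that $\Phi(f)=\sum_j f(\af_j)p_j$ with the $\{p_j\}\subset C([0,1],M_n)$ mutually orthogonal rank-one projections. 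The identities $\pi_0\circ\Phi=\phi_1$ and $\pi_1\circ\Phi=\phi_2$ are immediate from $V(0)=1$, $V(1)=V$, and the choices of $\gamma$ and the $\af_j$; the bound $\|\pi_t\circ\Phi(f)-\phi_1(f)\|<\ep$ follows from splitting
\[
\pi_t\circ\Phi(f)-\phi_1(f)=V(t)^*\Bigl[\sum_j f(\af_j(t))p_j^{(1)}-\phi_1(f)\Bigr]V(t)+\bigl[V(t)^*\phi_1(f)V(t)-\phi_1(f)\bigr],
\]
controlling the first bracket by the neighborhood diameter and the second by the commutator estimate from Lemma~\ref{frombk}.

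The principal obstacle is the first step: the permutation $\gamma$ supplied by~\ref{d0uniq} gives a conjugating unitary $V$ that in general is far from $1$ (for example, when spectral points are close together but differently ordered), so one cannot expect a short path from $V$ to $1$ in the naive sense. What rescues the argument is that the required property is not $V\approx 1$ but only that $V$ almost commutes with the finite set $\phi_1({\cal F})$; this weaker conclusion follows from the fact that $V^*\phi_1(f)V$ is spectrally close to $\phi_2(f)$, which in turn is within $\dt$ of $\phi_1(f)$ by the stronger \emph{norm}-closeness hypothesis (rather than mere trace-closeness as in~\ref{d0uniqhm}). The bookkeeping of choosing ${\cal G}$ and $\dt$ so that both the spectral mesh $\eta_1$ and the resulting commutator bound fall within the tolerance required by Lemma~\ref{frombk} is routine but must be executed with care.
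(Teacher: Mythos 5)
Your proposal is correct and rests on the same pillars as the paper's proof: the Marriage‑Law pairing from the proof of Lemma~\ref{d0uniq} to match spectral points, the observation that norm‑closeness of $\phi_1$ and $\phi_2$ forces the conjugating unitary $V$ to almost commute with $\phi_1({\cal F})$, Lemma~\ref{frombk} to contract $V$ to $1$ through almost‑commuting unitaries, and local path‑connectedness (with $\eta_1$ chosen against the uniform local‑path‑connectedness scale) to deform the spectral points. The structural difference is the parameterization: the paper runs a two‑phase interpolation, first invoking Lemma~\ref{d0uniqhm} to move the spectral points on $[0,1/2]$ with fixed projections, then rotating the projections by the \ref{frombk} path on $[1/2,1]$ with frozen spectral points, whereas you run a single pass $\Phi(f)(t)=V(t)^*\bigl(\sum_j f(\af_j(t))p_j^{(1)}\bigr)V(t)$ that moves both at once. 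Your version avoids a separate appeal to Lemma~\ref{d0uniqhm} at the cost of redoing its pairing and path‑selection argument inline, and the two‑term error split you write down is exactly the bookkeeping that makes the simultaneous motion work; both approaches yield the same estimates. One small point to make explicit: the choice of $\eta_1$ must be dominated not just by the modulus of continuity of ${\cal F}$ (so that $V$ nearly commutes within the tolerance $\dt_1$ demanded by \ref{frombk}) but also by the uniform scale at which balls in $X$ sit inside path‑connected sets of diameter $<\ep$, so that the paths $\af_j$ indeed stay within $\ep$ of $\af_j(0)$; this is the same uniformity the paper secures inside the proof of Lemma~\ref{d0uniqhm}.
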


\begin{proof}

Let $M=\sup\{\|f\|: f\in {\cal F}\}.$ Let $\dt_1>0$ (in place of
$\dt$) be as in \ref{frombk} associated with $\ep/4$ (in place of
$\ep$) and $M.$ Let $\dt_2=\min\{\ep/4, \dt_1/2\}.$ Let $\dt>0$ and
a finite ${\cal G}\subset C(X)$ be as in \ref{d0uniqhm} associated
with $\dt_2$ (in place of $\ep$), $\eta>0$ (in place of $\ep_1$)
and ${\cal F}.$

Now suppose that (\ref{d1twpt-1}) holds for the above ${\cal G}$ and
$\dt.$ It follows from \ref{d0uniqhm} that there exist continuous
maps $\af_i': [0,1/2]\to X$ ($i=1,2,...,n$) and mutually orthogonal
rank one projections $\{e_1,e_2,...,e_n\}\subset M_n$ such that
\beq\label{d1twpt-4}
\sum_{i=1}^nf(\af_i'(0))e_i=\phi_1(f)\andeqn \|\sum_{i=1}^n
f(\af_i'(t))e_i-\phi_1(f)\|<\ep_1 \tforal f\in {\cal F}\andeqn
\eneq
for all $t\in [0, 1/2]$ and there exists a unitary $u\in M_n$ such
that
\beq\label{d1twpt-5}
{\rm ad}\, u\circ \sum_{i=1}^n f(\af_i'(1/2))e_i=\phi_2(f)\tforal
f\in C(X).
\eneq
In particular,
\beq\label{d1twpt-6}
\|u(\sum_{i=1}^n f(\af_i'(1/2))e_i)-(\sum_{i=1}^n f(\af_i'(1/2))e_i)
u\|<2\ep_1
\eneq
for all $f\in {\cal F}.$ By applying \ref{frombk}, there exists
a continuous path of unitaries $\{u(t): t\in [1/2, 1]\}\subset M_n$
such that
\beq\label{d1twpt-7}
u(1/2)=1,\,\,\, u(1)=u\andeqn \|u(t)(\sum_{i=1}^n f(\af_i'(1/2))e_i)
-(\sum_{i=1}^n f(\af_i'(1/2))e_i)u\|<\ep/4
\eneq
for all $f\in {\cal F}.$ Define $\af_i(t)=\af_i'(t)$ if $t\in
[0,1/2]$ and $\af_i(t)=\af_i'(1/2)$ if $t\in (1/2, 1],$
$i=1,2,...,n.$ Define $p_i(t)=e_i$ if $t\in (1/2,1]$ and
$p_i(t)=u(t)^*e_iu(t)$ if $t\in (1/2, 1].$ Then
\beq\label{d1twpt-8}
&&\sum_{i=1}^n f(\af_i(0))p_i(0)=\phi_1(f),\,\,\, \sum_{i=1}^n
f(\af_i(1))p_i(1)=\phi_2(f)\tforal f\in C(X)\andeqn \\
&&\|\sum_{i=1}^n f(\af_i(t))p_i(t)-\phi_1(f)\|<\ep\tforal f\in {\cal
F}.
\eneq

\end{proof}

\begin{lem}\label{d1diag}
Let $X$ be a connected finite CW complex and let $n\ge 1.$
Let $Y$ be a finite CW complex of dimension 1 and let $\phi: C(X)\to C(Y, M_n).$
Then,  for  any $\ep>0$ and any finite subset ${\cal F}\subset C(X),$ there exist
mutually orthogonal rank one rank projections $p_1, p_2,...,p_n\subset C(Y, M_n)$ and continuous maps $\af_i: Y\to X$ such that
\beq\label{d1diag-1}
\|\phi(f)-\sum_{i=1}^n f(\alpha_i)p_i\|<\ep\tforal f\in {\cal F}.
\eneq

Moreover, if $\{y_1, y_2,...,y_L\}$ is fixed, then one can also require that
\beq\label{d1diag-2}
\phi(f)(y_l)=\sum_{i=1}^n f(\af_i(y_l))p_i(y_l)\tforal f\in C(X),
\eneq
$l=1,2,...,L.$

\end{lem}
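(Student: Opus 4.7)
The plan is to realize $Y$ as a finite graph, subdivide edges finely, diagonalize the point-evaluation of $\phi$ at each vertex, and extend over edges by repeatedly invoking Lemma~\ref{d1twopt}.

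Since $Y$ is a one-dimensional finite CW complex, it is a finite graph. First I enlarge the vertex set to include $y_1,\dots,y_L$, and then subdivide each edge until, on every resulting edge $e\cong[0,1]$, the restriction $\phi|_e:C(X)\to C([0,1],M_n)$ satisfies $\|\phi(g)(s)-\phi(g)(t)\|<\dt$ for all $g\in{\cal G}$ and $s,t\in[0,1]$, where $\dt$ and ${\cal G}$ come from Lemma~\ref{d1twopt} applied to ${\cal F}$ and some $\ep'<\ep$. At every vertex $v$, the point evaluation $\phi(\cdot)(v):C(X)\to M_n$ has finite spectrum, so one may write $\phi(f)(v)=\sum_{i=1}^n f(\xi_{v,i})e_{v,i}$ with mutually orthogonal rank-one projections $e_{v,i}\in M_n$. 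At the distinguished vertices $y_l$ I fix this diagonalization once and for all; it is exactly this choice that will deliver the moreover conclusion.

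Next, choose a spanning tree $T$ of $Y$ rooted at some $v_0$ (take $v_0$ to be one of the $y_l$ if desired). For each tree edge $e$ oriented away from the root as $v\to v'$, invoke Lemma~\ref{d1twopt} starting from the already-fixed diagonalization at $v$; this produces continuous $\alpha_{e,i}:[0,1]\to X$ and continuous orthogonal rank-one projections $p_{e,i}:[0,1]\to M_n$ with $(\alpha_{e,i}(0),p_{e,i}(0))=(\xi_{v,i},e_{v,i})$ and $\|\sum_i f(\alpha_{e,i}(t))p_{e,i}(t)-\phi(f)(t)\|<\ep'$ for $f\in{\cal F}$. The arrival values $(\alpha_{e,i}(1),p_{e,i}(1))$ are declared to be the fixed diagonalization at $v'$; this is unambiguous because $T$ provides a unique propagation path from $v_0$ to every vertex.

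Finally, for each non-tree edge $e:v\to v'$, both endpoint diagonalizations are now already fixed. Running Lemma~\ref{d1twopt} again produces a diagonal path on the edge whose value at $t=1$ diagonalizes the same homomorphism $\phi(\cdot)(v')$ but may differ from the pre-fixed one by a permutation $\sigma$ of indices (composed with a unitary commuting with $\phi(\cdot)(v')$). Because $\phi$ varies by less than $\dt$ on the edge, Lemma~\ref{d1local} applied near $v'$ forces $\sigma$ to pair up spectral points that are already within $\dt$ of each other, and the corresponding projections are close as well; one can then realign on a small sub-interval near $t=1$ by combining a local reindexing with a short unitary path supplied by Lemma~\ref{frombk}, absorbing the resulting perturbation into the $\ep$-budget. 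This cycle-closing step is where I expect the main difficulty: topological monodromy along cycles could in principle obstruct a continuous global labeling, and it is precisely the one-dimensionality of $Y$ together with the $\ep$-slack that allows the realignment to succeed. Assembling the edge data yields globally continuous $\alpha_i:Y\to X$ and mutually orthogonal rank-one projections $p_i\in C(Y,M_n)$ satisfying $\|\phi(f)-\sum_{i=1}^n f(\alpha_i)p_i\|<\ep$ on ${\cal F}$, and by construction $\phi(f)(y_l)=\sum_i f(\alpha_i(y_l))p_i(y_l)$ holds exactly at each distinguished vertex.
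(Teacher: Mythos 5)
Your approach is structurally the same as the paper's: subdivide $Y$ finely into edges, run Lemma~\ref{d1twopt} on each edge, and glue. The spanning-tree organization is a genuine improvement in clarity over the paper's terse one-paragraph proof, which simply writes down the piecewise gluing (deferring details to \cite{Lnhmtpr1}) without verifying that the $\af_i$ and $p_i$ agree at shared vertices; your formulation makes the propagation automatic for tree edges and correctly isolates the only real difficulty to the cycle-closing non-tree edges.

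The gap is in the cycle-closing realignment. You write that Lemma~\ref{d1local} forces $\sigma$ to pair up spectral points already within $\dt$ of each other, but this misstates the situation: the edge diagonalization at $t=1$ and the pre-fixed diagonalization at $v'$ both diagonalize the \emph{same} homomorphism $\pi_{v'}\circ\phi$ exactly, so $\sigma$ matches exactly equal eigenvalues and can only permute within eigenvalue multiplicity classes of $\pi_{v'}\circ\phi$. For the relabeling to yield globally continuous $\af_i$ and $p_i$, three things must be established that the proposal leaves out: (a) that $\sigma$ really does respect the multiplicity classes at $v'$ — this is where \ref{d1local} should be invoked, applied to the exact diagonalization at $v'$ rather than to $\dt$-close points; (b) that the relabeling is compatible with the matching already fixed at $v$, i.e., the multiplicity class structure at $v$ corresponds to that at $v'$ across the edge (this follows from the $\dt$-smallness of the variation of $\phi$ along the edge and the structure in \ref{d1local}, but it is a separate check); and (c) that the within-class projection swap is implemented by a unitary commuting with $\pi_{v'}\circ\phi$, after which \ref{frombk} supplies a short unitary path grafted onto a sub-interval $[1-\tau,1]$. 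The $\ep$-budget controls the operator-norm effect of that grafted path, but the agreement at $v'$ must be \emph{exact} for continuity — ``absorbing the perturbation'' is not available for the gluing condition itself, which is a continuity requirement, not an approximation. As stated, ``a local reindexing combined with a short unitary path'' does not distinguish the discontinuous relabeling from the legitimate unitary conjugation, and this is exactly the place where the argument could fail if the multiplicity-class bookkeeping goes wrong. A minor point: Lemma~\ref{d1twopt} does not literally deliver $(\af_{e,i}(0),p_{e,i}(0))=(\xi_{v,i},e_{v,i})$; you must re-index and conjugate its output by a constant unitary to arrange that, which is routine but should be stated since the whole spanning-tree propagation rests on it.
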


\begin{proof}
The proof of the first part follows that of 4.1 \cite{Lnhmtpr1}. Since
${\rm dim} X=1,$ one can choose a finite subset $\{\zeta_1,
\zeta_2,...,\zeta_K\}\subset X$ which is ordered in the way so that
$\zeta_{j-1} $ and $\zeta_j$ are connected by a path which is
homeomorphic to a line segment and $X$ is the union of these paths
(line segments). Without loss of generality, one may assume that
these paths are line segments and will be written as $[\zeta_{j-1},
\zeta_j],$ $j=1,2,...,K.$ Furthermore, one may assume that
$\{y_1,y_2,...,y_L\}\subset \{\zeta_1, \zeta_2,...,\zeta_K\}.$

By adding sufficiently many points to $\{\zeta_1, \zeta_2,...,\zeta_K\},$ one may also assume that
\beq\label{d1diag-4}
|f(t)-f(\zeta_{j-1})|<\ep/3\tforal f\in {\cal F}
\eneq
and all $t\in [\zeta_{j-1}, \zeta_j],$ $j=1,2,...,K.$

For each $j,$ by applying \ref{d1twopt}, there are continuous maps
$\af_{i,j}: [\zeta_{j-1}, \zeta_j]\to X$ ($i=1,2,...,n$) and
mutually orthogonal rank one projections $\{p_{1,j},
p_{2,j},...,p_{n,j}\}\subset C([\zeta_{i-1}, \zeta_j], M_n)$ such
that
\beq\label{d1diag-5}
\sum_{i=1}^n
f(\af_{i,j}(\zeta_{j-1}))p_i(\zeta_{j-1})=\pi_{\zeta_{j-1}}\circ
\phi,\,\,\, \sum_{i=1}^n f(\af_{i,j}(\zeta_j))p_i({\zeta_j})=\pi_{\zeta_j}\circ \phi\andeqn\\
\|\sum_{i=1}^n f(\af_{i,j})(t)p_i(t)-\pi_{\zeta_{j-1}}\circ
\phi(f)\|<\ep/3\tforal f\in {\cal F},
\eneq
$j=1,2,...,K.$ Define $\af_i: Y\to X$ such that
$\af_i(t)=\af_{i,j}(t)$ for $t\in [\zeta_{j-1}, \zeta_j],$ and
define $p_i(t)=p_{i,j}(t)$ for $t\in [\zeta_{j-1}, \zeta_j],$
$j=1,2,...,K.$

Thus
$$
\|\phi(f)-\sum_{i=1}^n f(\af_i)p_i\|<\ep\tforal f\in {\cal F}.
$$

\end{proof}

\section{Approximate \hm s and The Basic Homotopy Lemma}

\begin{lem}\label{d1semi}
Let  $X$ be a locally path connected compact metric space and let
$n\ge 1$ be an integer. Then, for any $\ep>0$ and any finite subset
${\cal F}\subset C(X),$ there exist $\dt>0$ and a finite subset
${\cal G}\subset C(X)$ satisfying the following: for any unital map
$\phi: C(X)\to  C([0,1],M_n)$ with $\|\phi(f)\|\le M$ for all $\|f\|\le 1$ such that
\beq\label{d1semi-1}
\|\phi(\lambda_1 x+ \lambda_2 y)-(\lambda_1\phi(x)+\lambda_2\phi(y))\|<\dt,\\\label{d1semi-2}
\|\phi(xy)-\phi(x)\phi(y)\|<\dt\tand \|\phi(x^*)-\phi(x)^*\|<\dt
\eneq
for all $\lambda_1,\lambda_2\in {\mathbb C}$ with $|\lambda_i|\le 1$ ($i=1,2$) and $x, y\in {\cal G},$ there
exists a unital \hm\, $\psi: C(X)\to C([0,1],M_n)$ such that
\beq\label{d1semi-3}
\|\phi(f)-\psi(f)\|<\ep\tforal f\in {\cal F}.
\eneq
 If, moreover, $\pi_0\circ \phi$ is a unital \hm, or both
 $\pi_0\circ \phi$ and $\pi_1\circ \phi$ are unital \hm s, then
 $\psi$ can be so chosen that $\pi_0\circ\psi=\pi_0\circ \phi$ (or
 $\pi_0\circ \psi=\pi_1\circ \phi$ and $\pi_1\circ \phi=\pi_1\circ
 \psi$).

\end{lem}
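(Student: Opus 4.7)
\medskip

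\noindent\textbf{Proof proposal for Lemma \ref{d1semi}.} The plan is to discretize $[0,1]$, apply Theorem \ref{d0semi} at the nodes to convert the approximate homomorphism into honest \hm s $\phi_i\colon C(X)\to M_n$, and then stitch these nodes together using Lemma \ref{d1twopt} to obtain a genuine \hm\ $\psi\colon C(X)\to C([0,1],M_n)$. Finally, I verify that $\psi$ stays within $\ep$ of $\phi$ on ${\cal F}$.

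First I choose constants. Given $\ep>0$ and ${\cal F}$, I set $\ep_1=\ep/8$ and let $\dt_0>0$, ${\cal G}_0\subset C(X)$ be the data produced by Theorem \ref{d0semi} for $\ep_1$ (in place of $\ep$) and ${\cal F}$; and let $\dt_1>0$, ${\cal G}_1\subset C(X)$ be the data produced by Lemma \ref{d1twopt} for $\ep_1$, $\ep/8$ (in place of $\eta$), and ${\cal F}$. Set $\dt=\min\{\dt_0,\dt_1\}/4$ and ${\cal G}={\cal G}_0\cup{\cal G}_1\cup{\cal F}\cup\{1\}$. Now suppose $\phi\colon C(X)\to C([0,1],M_n)$ satisfies (\ref{d1semi-1})--(\ref{d1semi-2}) with these choices.

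Next I exploit uniform continuity. Since $\phi({\cal G}\cup{\cal F})$ is a finite subset of $C([0,1],M_n)$, there is a partition
$$
0=t_0<t_1<\cdots<t_N=1
$$
fine enough that $\|\phi(g)(t)-\phi(g)(t_i)\|<\dt/4$ whenever $t\in[t_{i-1},t_i]$ and $g\in{\cal G}\cup{\cal F}$. For each $i$ the point evaluation $\pi_{t_i}\circ\phi\colon C(X)\to M_n$ is still unital and inherits the $\dt$-multiplicativity on ${\cal G}$, so Theorem \ref{d0semi} provides a unital \hm\ $\phi_i\colon C(X)\to M_n$ with $\|\phi_i(f)-\pi_{t_i}\circ\phi(f)\|<\ep_1$ for all $f\in{\cal G}\cup{\cal F}$. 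In the ``moreover'' situation where $\pi_0\circ\phi$ is already a \hm, I take $\phi_0=\pi_0\circ\phi$ directly (and similarly $\phi_N=\pi_1\circ\phi$ when available); this costs nothing in the estimates. Consecutive nodes then satisfy
$$
\|\phi_{i-1}(g)-\phi_i(g)\|\le 2\ep_1+\dt/2<\dt_1\rforal g\in{\cal G},\ i=1,\dots,N.
$$

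I now invoke Lemma \ref{d1twopt} on each subinterval: there is a unital \hm\ $\Psi_i\colon C(X)\to C([t_{i-1},t_i],M_n)$ with $\pi_{t_{i-1}}\circ\Psi_i=\phi_{i-1}$, $\pi_{t_i}\circ\Psi_i=\phi_i$, and $\|\Psi_i(f)(t)-\phi_{i-1}(f)\|<\ep_1$ for all $f\in{\cal F}$ and $t\in[t_{i-1},t_i]$. The endpoint matching $\pi_{t_i}\circ\Psi_i=\phi_i=\pi_{t_i}\circ\Psi_{i+1}$ guarantees that the concatenation $\psi\colon C(X)\to C([0,1],M_n)$ defined by $\psi(f)|_{[t_{i-1},t_i]}=\Psi_i(f)$ is a well-defined unital \hm. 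For $t\in[t_{i-1},t_i]$ and $f\in{\cal F}$,
$$
\|\psi(f)(t)-\phi(f)(t)\|\le\|\Psi_i(f)(t)-\phi_{i-1}(f)\|+\|\phi_{i-1}(f)-\phi(f)(t_{i-1})\|+\|\phi(f)(t_{i-1})-\phi(f)(t)\|<\ep_1+\ep_1+\dt/4<\ep,
$$
which gives (\ref{d1semi-3}). The boundary conditions on $\psi$ at $t=0,1$ hold by construction in the ``moreover'' case.

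The step I expect to be the main obstacle is the compatibility needed for concatenation: the \hm s produced by Theorem \ref{d0semi} are only \emph{close to}, not equal to, $\pi_{t_i}\circ\phi$, so if one tried to interpolate directly via \ref{d0uniqhm} the endpoints would not match across adjacent intervals. The resolution is precisely to use Lemma \ref{d1twopt}, which provides a \hm\ on $[t_{i-1},t_i]$ taking \emph{prescribed} values $\phi_{i-1}$ and $\phi_i$ at the endpoints; this is what makes the concatenation continuous and forces the overall construction to land in $C([0,1],M_n)$ rather than in some perturbation thereof.
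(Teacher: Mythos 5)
Your strategy matches the paper's exactly: discretize $[0,1]$, apply Theorem \ref{d0semi} at the nodes to obtain honest unital homomorphisms $\phi_i\colon C(X)\to M_n$, then stitch these together using Lemma \ref{d1twopt}, which is precisely what makes the endpoints match across subintervals so the concatenation is continuous. Your closing paragraph correctly identifies this as the crux. However, there is a genuine gap in the choice of constants, and it is not merely cosmetic.

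You set $\ep_1=\ep/8$ and invoke Theorem \ref{d0semi} with target accuracy $\ep_1$, so that $\|\phi_i(f)-\pi_{t_i}\circ\phi(f)\|<\ep_1$. You then assert $\|\phi_{i-1}(g)-\phi_i(g)\|\le 2\ep_1+\dt/2<\dt_1$ to license the application of Lemma \ref{d1twopt}. But $2\ep_1=\ep/4$ is a quantity depending only on $\ep$, whereas $\dt_1$ is whatever Lemma \ref{d1twopt} hands back and may be far smaller than $\ep$. Nothing in your construction forces $\ep/4<\dt_1$, so this inequality is unjustified and the appeal to \ref{d1twopt} can fail. There is a companion issue: to verify the hypothesis of \ref{d1twopt} you need the estimate $\|\phi_{i-1}(g)-\phi_i(g)\|<\dt_1$ for $g\in{\cal G}_1$, but you invoked \ref{d0semi} with ${\cal F}$ as the finite set, so the conclusion of \ref{d0semi} is only guaranteed on ${\cal F}$, not on ${\cal G}_1$. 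Both problems stem from getting the order of quantifiers wrong. The fix is the one the paper uses: first fix $\dt_1$ and ${\cal G}_1$ from Lemma \ref{d1twopt} (with target $\ep/3$, say, and assuming $\dt_1<\ep/2$); only then invoke Theorem \ref{d0semi} with target accuracy $\dt_1/3$ and finite set ${\cal G}_1$. This makes consecutive nodes differ by less than $\dt_1$ on ${\cal G}_1$, so \ref{d1twopt} applies, and the final estimate $\|\psi(f)(t)-\phi(f)(t)\|<\ep/3+\dt_1/3+\dt_1/3<\ep$ goes through. Apart from this reordering of the constant choices, your argument is the paper's.
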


\begin{proof}
Let $\ep>0$ and a finite subset ${\cal F}\subset C(X)$ be given.
Without loss of generality, we may assume that
$\|f\|\le 1$ for all
$f\in {\cal F}.$ Let $\dt_1>0$ (in
place of $\dt$) and ${\cal G}_1$ (in place of ${\cal G}$) associated with $\ep/3,$ ${\cal F}$ and $n$ required by
\ref{d1twopt}. One may assume that ${\cal F}\subset {\cal G}_1$ and $\dt_1<\ep/2.$

Let $\dt>0$ and ${\cal G}\subset C(X)$ be a finite subset associated
with $\dt_1/3$ (in place of $\ep$) and ${\cal G}_1$ (in place of
${\cal F}$ and $M$ as required by \ref{d0semi}. One  may assume that
${\cal G}_1\subset {\cal G}.$ Suppose that $\phi$ satisfies
(\ref{d1semi-1}) and (\ref{d1semi-2}) for the above $\dt$ and ${\cal
G}.$ Let $\eta>0$ such that
\beq\label{d1semi-4}
|f(t)-f(t')|<\dt_1/3\tforal f\in  {\cal G}
\eneq
if $|t-t'|<\eta.$ Let
$$
0=t_0<t_1<\cdots <t_N=1
$$
be a partition such that $|t_i-t_{i-1}|<\eta,$ $i=1,2,...,N.$
It follows from \ref{d0semi} that, for each $i,$ there exists a
unital \hm\, $\psi_i: C(X)\to M_n$ such that
\beq\label{d1semi-4+}
\|\phi(f)(t_i)-\psi_i(f)\|<\dt_1/3 \tforal f\in {\cal F}\cup {\cal
G}_1.
\eneq
 One estimates that
\beq\label{d1semi-5}
\|\psi_i(f)-\psi_{i-1}(f)\|&\le& \|\psi_i(f)-\phi(f)(t_i)\|\\
&&+\|\phi(f)(t_i)-\phi(f)(t_{i-1})\|+\|\phi(f)(t_{i-1})-\psi_{i-1}(f)\|\\
&<&\dt_1/3+\dt_1/3+\dt_1/3=\dt_1
\eneq
for all $f\in {\cal F}\cup {\cal G}_1,$ $i=1,2,...,N.$
It follows from \ref{d1twopt} that, for each $i,$ there is a unital \hm\, $\Phi_i: C(X)\to C([t_{i-1}, t_i], M_n)$
such that
\beq\label{d1semi-6}
\pi_{t_{i-1}}\circ \Phi_i=\psi_{i-1},\,\,\, \pi_{t_i}\circ \Phi_i=\psi_i \andeqn\\
\|\pi_t\circ \Phi_i(f)-\psi_{i-1}(f)\|<\ep/3
\eneq
for all $t\in [t_{i-1}, t_i],$ $i=1,2,...,N.$
Note that it $\pi_0\circ \phi$ (or $\pi_1\circ \phi$) is a unital \hm, then one can require
that $\pi_0\circ \Phi=\pi_0\circ\phi$ (or $\pi_1\circ \Phi=\pi_1\circ \phi$).

Define $\Phi: C(X)\to C([0,1], M_n)$ by $\pi_t\circ \Phi=\pi_t\circ \Phi_i$ for $t\in [t_{i-1}, t_i],$
$i=1,2,...,N.$  It is easy to see that $\Phi$ meets the requirements.

\end{proof}

\begin{lem}\label{d1hpn}
Let $X$ be a compact metric space and let $n\ge 1$ be an integer.
For any $\ep>0$ and any finite subset ${\cal F}\subset C(X),$ there exist a finite subset ${\cal G}\subset C(X)$ and
$\dt>0$ satisfying the following:

Suppose that $\phi: C(X)\to C([0,1], M_n)$ is a unital \hm\, and $u\in C([0,1], M_n)$ such that
\beq\label{d1hpn-1}
\|u\phi(g)-\phi(g)u\|<\dt\tforal g\in {\cal G}.
\eneq
Then there exists a continuous path of unitaries $\{U(s): s\in [0,1]\}$ in $C([0,1], M_n)$ with $U(0)=u$ and $U(1)=1$ such that
\beq\label{d1hpn-2}
\|U(s)\phi(f)-\phi(f)U(s)\|<\ep\tforal f\in {\cal F}
\eneq
and for all $s\in [0,1].$
Moreover, if
\beq\label{d1hpn-2+}
u(0)(\pi_0\circ \phi(f))=(\pi_0\circ \phi(f))u(0) {\rm (}and\,\,\,
u(1)(\pi_1\circ \phi(f))=(\pi_1\circ \phi(f))u(1){\rm )}
\eneq
for all $f\in C(X),$ then one can choose $U$ so that
\beq\label{d1hpn-2++}
\hspace{-0.3in}U(s)(0)(\pi_0\circ \phi(f))=(\pi_0\circ \phi(f))U(s)(0) \,{\rm (} and\,\,\,
U(s)(1)(\pi_1\circ \phi(f))=(\pi_1\circ \phi(f))U(s)(1){\rm )}
\eneq
for all $f\in C(X).$

\end{lem}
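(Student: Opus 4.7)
The plan is to reduce fiberwise to the finite-dimensional Basic Homotopy Lemma \ref{frombk} after an approximate diagonalization of $\phi$. For given $\ep>0$ and ${\cal F}$, I would choose small $\ep_1>0$ and enlarged finite subsets ${\cal F}_1,{\cal G}_1\subset C(X)$, and apply Lemma \ref{d1diag} to $\phi$ over the $1$-dimensional complex $[0,1]$ to obtain continuous maps $\af_i\colon [0,1]\to X$ and mutually orthogonal rank-one projections $p_i\in C([0,1],M_n)$ with $\|\phi(f)-\sum_i f(\af_i)p_i\|<\ep_1$ for $f\in{\cal F}_1$. The finite set ${\cal G}$ and $\dt>0$ are then chosen so that $\|u\phi(g)-\phi(g)u\|<\dt$ on ${\cal G}$ forces $u$ to approximately commute with $\sum_i f(\af_i)p_i$ within $\ep_1$ for $f\in{\cal F}_1$.

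Next, I would partition $[0,1]$ into nodes $0=t_0<t_1<\cdots<t_N=1$ fine enough that $\|\phi(f)(t)-\phi(f)(t_j)\|<\ep_1$ for $t\in[t_{j-1},t_j]$ and $f\in{\cal F}$. At each node $t_j$, Lemma \ref{frombk} produces a continuous path $\gamma_j\colon [0,1]\to M_n$ of unitaries from $u(t_j)$ to $1_{M_n}$, of length at most $2\pi$, satisfying $\|\gamma_j(s)\phi(f)(t_j)-\phi(f)(t_j)\gamma_j(s)\|<\ep/4$ for $f\in{\cal F}$ and all $s\in[0,1]$. I would then patch these local homotopies into a jointly continuous family $U(s)\in C([0,1],M_n)$ with $U(s)(t_j)=\gamma_j(s)$, $U(0)(t)=u(t)$, and $U(1)(t)=1_{M_n}$. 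Concretely, on each rectangle $\{(s,t):s\in[0,1],\,t\in[t_{j-1},t_j]\}$, I would write the corner-mismatch $\gamma_j(s)\gamma_{j-1}(s)^{-1}$ as $\exp(ih_j(s))$ with $\|h_j(s)\|$ controlled by the lengths of the two paths and the oscillation of $u$ on $[t_{j-1},t_j]$, then define $U(s)(t)=\exp\bigl(i\lambda(t)h_j(s)\bigr)\gamma_{j-1}(s)$ where $\lambda\colon [t_{j-1},t_j]\to[0,1]$ is linear.

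The principal obstacle is verifying that $U(s)(t)$ approximately commutes with $\phi(f)(t)$ at every interior $t$, not just at the nodes. One bounds $\|U(s)(t)\phi(f)(t)-\phi(f)(t)U(s)(t)\|$ by inserting $\phi(f)(t_{j-1})$ and using (a) the small oscillation of $\phi$ on $[t_{j-1},t_j]$, (b) the commutation of $\gamma_{j-1}(s)$ with $\phi(f)(t_{j-1})$, and (c) the smallness of $\|h_j(s)\|$ (thanks to the length-$2\pi$ bound of \ref{frombk}) so that $\exp(i\lambda h_j(s))$ approximately commutes with what $\gamma_{j-1}(s)$ commutes with. Choosing $\dt$, $\ep_1$, and the mesh small enough keeps the cumulative error below $\ep$.

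For the moreover clause: if $u(0)$ commutes with $\pi_0\circ\phi$ exactly, I would apply \ref{frombk} at $t_0=0$ with the exact (rather than approximate) commutation hypothesis, producing $\gamma_0(s)$ in the exact commutant of $\pi_0\circ\phi(C(X))\subset M_n$, which, being a finite-dimensional $C^*$-algebra, has path-connected unitary group. Since $U(s)(0)=\gamma_0(s)$ by construction, $U(s)(0)$ commutes exactly with $\pi_0\circ\phi(f)$ for all $f\in C(X)$; the same argument at $t_N=1$ handles the case when $u(1)$ commutes exactly with $\pi_1\circ\phi$.
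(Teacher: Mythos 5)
Your plan diverges substantially from the paper's argument, and the divergence is precisely where the gap lies. The paper does \emph{not} diagonalize $\phi$ alone and then treat $u$ fiberwise; instead it packages the pair $(\phi, u)$ into one almost-multiplicative map $C(X\times\T)\to C([0,1],M_n)$, corrects it to a genuine homomorphism $\Phi$ via Lemma~\ref{d1semi}, and then applies Lemma~\ref{d1diag} to $\Phi$, producing a \emph{simultaneous} approximate diagonalization of $\phi$ and $u$ in the same projections $p_i$ with the $u$-part recorded as scalar maps $\beta_i\colon[0,1]\to\T$. Since $[0,1]$ is contractible, each $\beta_i$ has a continuous logarithm globally, and the homotopy $U(s)=\sum_i u_i(s)p_i$ commutes \emph{exactly} with $\sum_i f(\alpha_i)p_i$ — no patching, no error accumulation. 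Your proposal instead diagonalizes only $\phi$ and then invokes \ref{frombk} independently at each node $t_j$, deferring the entire difficulty to the patching step.

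That patching step is where the argument breaks. Lemma~\ref{frombk} selects a branch cut (a spectral gap of $u(t_j)$) to build each path $\gamma_j$, and that choice is neither canonical nor continuous in $t_j$: two nearby unitaries $u(t_{j-1})\approx u(t_j)$ may have their "chosen" gaps in essentially opposite positions, so that at intermediate $s$ the mismatch $\gamma_j(s)\gamma_{j-1}(s)^{-1}$ is far from $1$. Your claim that $\|h_j(s)\|$ is "controlled by the lengths of the two paths" gives only $\|h_j(s)\|\lesssim 4\pi$, which is useless for the commutator estimate: knowing that $\exp(ih_j(s))$ approximately commutes with $\phi(f)(t_{j-1})$ does \emph{not} transfer to $\exp(i\lambda h_j(s))$ for $\lambda\in(0,1)$ when $h_j(s)$ is of order $\pi$. (This is precisely the phenomenon Lemma 2.6.11 of \cite{Lnbk} / Lemma~\ref{frombk} is designed to overcome in the first place, and re-invoking it for the mismatch produces a new path that need not vary continuously in $s$.) To make your route work you would have to enforce a coherent, $t$-continuous choice of spectral gap for $u(t)$ across $[0,1]$ — which is exactly the information the paper obtains for free by diagonalizing $u$ jointly with $\phi$ over the contractible base. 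A secondary point: in the moreover clause you assert that \ref{frombk}, applied under exact commutation, yields a path in the commutant; \ref{frombk} as stated only yields approximate commutation, so the path-in-the-commutant claim requires the separate (true but unstated) observation that the unitary group of the relative commutant in $M_n$ is connected, and you must then also splice this exact path continuously into the approximate patchwork on $[t_0,t_1]$, re-encountering the same difficulty.
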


\begin{proof}
Fix $\ep>0$ and a finite subset ${\cal F}\subset C(X).$ Put
$Y=X\times \T.$ We identify $C(Y)$ with $C(X)\otimes C(\T).$ Denote
by ${\cal F}_1=\{f\otimes 1, 1\otimes z: f\in {\cal F}\},$ where
$z\in C(\T)$ is the identity function on the unit circle. Let
$\dt>0$ and let ${\cal G}_1\subset C(Y)$ (in place of ${\cal G}$) be
the finite subset required by \ref{d1semi} for $\ep/4$ and ${\cal
F}_1$ and $Y$ (in place of $X$).

It is easy to see, by choosing smaller $\dt,$ one may assume that
${\cal G}_1=\{f\otimes 1, 1\otimes z: f\in {\cal G}\}$ for some finite subset ${\cal G}\subset C(X).$
 Assume (\ref{d1hpn-1}) holds for $\dt$ and ${\cal G}.$
It follows from \ref{d1semi} that there exists a unital \hm\, $\Phi: C(X)\otimes C(\T)\to C([0,1], M_n)$ such that
\beq\label{d1shpn-3}
\|\Phi(f\otimes 1)-\phi(f)\|<\ep/4\andeqn \|\Phi(1\otimes z)-u\|<\ep/4
\eneq
for all $f\in {\cal F}.$
It follows from \ref{d1diag} that there are continuous functions $\alpha_i: [0,1]\to X$ and $\beta_i: [0,1]\to \T$ such that
\beq\label{d1shpn-4}
\|\Phi(f\otimes 1)-\sum_{i=1}^n f(\alpha_i)p_i\|<\ep/4\andeqn
\|\Phi(1\otimes z)-\sum_{i=1}^n\beta_ip_i\|<\ep/4
\eneq
for all $f\in {\cal F},$ where $p_1, p_2,...,p_n$ are rank one projections in $C([0,1], M_n).$

Since $\beta_i\in C([0,1], \T),$ there exists a continuous path of
unitaries $\{u_i(s): s\in [1/2,1]\}\subset C([0,1], M_n)$ such that
\beq\label{d1shpn-5}
u_i(1/2)=\beta_i\andeqn u_i(1)=1,\,\,\,i=1,2,...,n.
\eneq
Define $U(s)=\sum_{i=1}^n u_i(s)p_i$ for $s\in [1/2,1].$ Note that,
for each $s\in [1/2,1],$ $U(s)\in C([0,1], M_n).$ Moreover,
\beq\label{d1shpn-5+1}
U(1/2)=\sum_{i=1}^n\beta_ip_i, \,\,\, u(1)=1\andeqn
U(s)(\sum_{i=1}^nf(\alpha_i)p_i)=(\sum_{i=1}^nf(\alpha_i)p_i)U(s)
\eneq
for all $f\in C(X)$ and $s\in [1/2,1].$ Thus , by (\ref{d1shpn-3})
and (\ref{d1shpn-4}),
\beq\label{d1shpn-5+2}
\|U(s)\phi(f)-\phi(f)U(s)\|<\ep/2\tforal f\in {\cal F}\rforal s\in
[1/2, 1].
\eneq
Since, by (\ref{d1shpn-3}) and (\ref{d1shpn-4}),
\beq\label{d1shpn-6}
\|1_{M_n}-u^*(\sum_{i=1}^n \beta_ip_i)\|<\ep/2,
\eneq
there exists a self-adjoint element $a\in C([0,1], M_n)$ such that
\beq\label{d1shpn-6+}
u^*(\sum_{i=1}^n\beta_ip_i)=\exp(ia)\andeqn \|a\|\le 2\arcsin
(\ep/4).
\eneq
Define $U(s)=u\exp(i 2sa)$ for $s\in [0,1/2].$ Then
 $\{U(s): s\in [0,1/2]\}\subset C([0,1], M_n)$ such that
\beq\label{d1shpn-7}
U(0)=u,\,\,\, U(1/2)=\sum_{i=1}^n\beta_i(1/2)p_i\andeqn\\
\|1-u^*U(s)\|<\ep/2\tforal t\in [0,1/2]
\eneq
for all $s\in [0,1/2].$ Note that $\{U(s): t\in [0,1]\}\subset
C([0,1], M_n)$ is a continuous path of unitaries and one estimates
that
\beq\label{d1shpn-8}
\|\phi(f)U(s)-U(s)\phi(f)\|<\ep\tforal f\in {\cal F}
\eneq
for all $s\in [0,1].$

This proves the first part of the lemma. To prove the last part, by
applying the last part of \ref{d1semi}, one may choose $\Phi$ so
that
\beq\label{d1shpn-9}
\pi_0\circ \Phi(f\otimes 1)=\pi_0\circ \phi(f)\andeqn \pi_0\circ
\Phi(1\otimes z)=u(0).
\eneq
Moreover, by \ref{d1diag},
\beq\label{d1shpn-10}
\pi_0\circ \Phi(f\otimes 1)=\sum_{i=1}^nf(\af_i(0))p_i(0)\andeqn
\pi_0\circ \Phi(1\otimes z)=\sum_{i=1}^n \beta_i(0)p_i(0).
\eneq
Therefore,
\beq\label{d1shpn-11}
(\pi_0\circ \phi(f))U(s)(0)=U(s)(0)(\pi_0\circ \phi(f))
\eneq
for all $f\in C(X)$ and $s\in [1/2,1].$ In this case, one also has
$a(0)=0$ in (\ref{d1shpn-6+}). Therefore $U(s)(0)=u$ for all $s\in
[0,1/2].$ Thus, in fact, (\ref{d1shpn-11}) holds for all $s\in
[0,1].$

One can also make the same arrangement for $t=1.$

\end{proof}

\section{Approximate diagonalization }

\begin{df}\label{LAR}
Let $Y$ be a compact metric space. Recall that $Y$   is a locally
absolute retract, if, for any $y\in Y$ and  any $\ep_1>0,$ there
exist $\ep_1>\ep_2>0$ and a closed neighborhood $Z$ of $y$ such that
$B(y, \ep_2)\subset Z\subset B(y, \ep_1)$ and  $Z$ is an absolute
retract.
\end{df}

\begin{lem}\label{extension}
Let $X$ be a compact metric space which is locally absolutely
retract and $n\ge 1$ be an integer. Let $\T$ be the unit circle. Let
$\ep>0$ and ${\cal F}\subset C(X)$ be a finite subset. Suppose that
$\phi: C(X)\to C(\T, M_n)$ is a unital \hm\, satisfying the
following:

{\rm (1)} $\phi(f)=\sum_{i=1}^nf(\af_i)p_i,$ where $\af_i: \T\to X$
are  continuous maps and $\{p_1,p_2,...,p_n\}\subset C(\T, M_n)$ are
mutually orthogonal rank one projections;

{\rm (2)} $\pi_{\sqrt{-1}}\circ \phi(f)=\sum_{i=1}^{m}f(x_i)e_i,$
where $\{x_1,x_2,...,x_m\}\subset X$ are distinct points, $\{e_1,
e_2,...,e_m\}$ is a set of mutually orthogonal non-zero projections,


{\rm (3)} There is a partition $\{S_1, S_2,...,S_{m}\}$ of
$\{1,2,...,n\}$  such that, for $s\in S_i,$
\beq\label{ext-2}
{\rm dist}(x_i ,\af_s(y))< \eta_1\tforal y\in X,
\eneq
$B(x_i, \eta_1)\subset Z_i\subset B(x_i, \eta_2/4)$ and $Z_i$
is a compact subset which is also absolutely retract, $i=1,2,...,m,$ where
$$
|f(x)-f(x')|<\min\{\dt_0/2, \ep/4\} \tforal f\in {\cal G},
$$
if ${\rm dist}(x, x')<\eta_2,$ and where  $\dt_0$ (in place of
$\dt$) and ${\cal G}$ associated with  $\ep/2$ (in place of $\ep$)
and ${\cal F}$  required by \ref{d1hpn};

{\rm (4)}
\beq\label{ext-2+}
\|\pi_{\sqrt{-1}}\circ \phi(g)-\pi_t\circ\phi(g)\|<\dt_0/2\tforal
g\in {\cal G}\tand t\in \T.
\eneq

Then, there exist continuous maps $\gamma_i: D\to X$ and mutually
orthogonal rank one projections $\{q_1,q_2,...,q_n\}\subset C(D,
M_n)$ such that, for any $t\in \T$ and any $y\in D,$
\beq\label{ext-3}
\|\pi_t\circ \phi(f)-\sum_{i=1}^n f(\gamma_i(y))q_i(y)\|<\ep\tforal
f\in {\cal F},\\
\label{ext-4} \gamma_i|_{\T}=\alpha_i\tand q_i|_\T=p_i,
\eneq
$i=1,2,...,n,$ where $D$ is the unit disk.

\end{lem}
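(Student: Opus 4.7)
\emph{Proof plan.} The strategy is to extend the diagonal form $\phi(f) = \sum f(\af_i) p_i$ from $\T$ to $D$ in two stages: extend each map $\af_s$ using the absolute-retract property of the $Z_j$'s, and extend the tuple of orthogonal rank-one projections via simple connectivity of the complete flag manifold, after a preliminary unitary straightening that aligns the blocks $\{p_s : s \in S_j\}$ exactly into $e_j M_n e_j$.

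\emph{Straightening on $\T$.} Set $E_j(t) := \sum_{s \in S_j} p_s(t)$. Choose a cut-off $g_j \in C(X)$ (which we include in ${\cal G}$ by a harmless enlargement) with $g_j = 1$ on $B(x_j, \eta_2/4)$ and supported in a small disjoint neighbourhood of $x_j$; by (1)--(3), $\phi(g_j)(\sqrt{-1}) = e_j$ and $\phi(g_j)(t) = E_j(t)$, so (4) yields $\|E_j(t) - e_j\| < \dt_0/2$ for all $t \in \T$. A standard polar construction (set $v_j := E_j e_j (e_j E_j e_j)^{-1/2}$ and $u := \sum_j v_j$) produces a continuous unitary $u : \T \to U(M_n)$, close to $1_{M_n}$, with $u(t)\,e_j\,u(t)^* = E_j(t)$ for all $j, t$. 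Setting $\hat p_s := u^* p_s u$ for $s \in S_j$ gives rank-one projections in $e_j M_n e_j$ with $\sum_{s \in S_j} \hat p_s(t) = e_j$ exactly.

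\emph{Topological extension.} Since $Z_j$ is an absolute retract and $\T \subset D$ is closed, each $\af_s|_{\T} : \T \to Z_j$ ($s \in S_j$) extends to $\gamma_s : D \to Z_j$. For each $j$, the tuple $(\hat p_s)_{s \in S_j} : \T \to U(|S_j|)/T^{|S_j|}$ lands in the complete flag manifold of $e_j M_n e_j$, which is a simply connected compact homogeneous space, hence extends continuously to $(\hat q_s)_{s \in S_j} : D \to U(|S_j|)/T^{|S_j|}$ realised inside $e_j M_n e_j$, preserving orthogonality and $\sum_{s \in S_j} \hat q_s = e_j$. Since $u$ is close to $1_{M_n}$, it is null-homotopic on $\T$ and extends via the logarithm, for instance $\bar u(re^{i\theta}) := \exp(i r \log u(e^{i\theta}))$, to $\bar u : D \to U(M_n)$ close to $1_{M_n}$. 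Define $q_s(y) := \bar u(y)\,\hat q_s(y)\,\bar u(y)^*$; then $q_s|_\T = p_s$, $\gamma_s|_\T = \af_s$, and the $\{q_s(y)\}$ are mutually orthogonal rank-one projections summing to $1_{M_n}$ on all of $D$.

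\emph{Approximation and main obstacle.} For $y \in D$ and $s \in S_j$, $\gamma_s(y) \in Z_j \subset B(x_j, \eta_2/4)$, so $|f(\gamma_s(y)) - f(x_j)| < \ep/4$ for $f \in {\cal F}$; combined with $\sum_{s \in S_j} \hat q_s(y) = e_j$ and orthogonality of the $\hat q_s$,
\[
\Big\| \sum_{s=1}^n f(\gamma_s(y))\,\hat q_s(y) - \pi_{\sqrt{-1}} \circ \phi(f) \Big\| < \ep/4.
\]
Conjugating by $\bar u(y)$, using $\|\bar u - 1_{M_n}\|$ small and the bound $\|\pi_{\sqrt{-1}} \circ \phi(f) - \pi_t \circ \phi(f)\| < \dt_0/2$ from (4), yields the desired (\ref{ext-3}). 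The main obstacle lies in the simultaneous topological extension of the projection tuple: a naive extension $\T \to U(n)/T^n$ would destroy the block relations $\sum_{s \in S_j} q_s = e_j$ on which the final estimate relies, so the preliminary unitary straightening by $u$ (extendable precisely because it is null-homotopic) is what reduces the problem to extending within each block's simply connected flag variety.
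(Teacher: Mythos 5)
Your proof is correct in outline but takes a genuinely different route from the paper's, and the difference is worth spelling out. The paper's proof never straightens the blocks $E_j(t)=\sum_{s\in S_j}p_s(t)$ to $e_j$ and never invokes the flag manifold. Instead it works with a cut-and-glue construction on a planar region: it first builds an extension $\Phi_1$ on a half-annular region $\Omega$ by extending each $\af_i$ into $Z_i$ (as you do) but keeping the \emph{original} $p_i(e^{\sqrt{-1}\theta})$ as the projection field; it then produces a unitary $u$ with $u^*p_i(e^{\sqrt{-1}t})u=p_i(e^{-\sqrt{-1}t})$ and feeds this $u$ into the one-dimensional Basic Homotopy Lemma (Lemma~\ref{d1hpn}) to interpolate $1\rightsquigarrow u$ along a family of unitaries that nearly commute with the diagonal $\Phi_1$; this homotopy is what fills in the second region $\Omega_1$, and finally $\Omega\cup\Omega_1$ is mapped onto an annular subset of $D$ with two small caps filled in by constants. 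This is exactly why hypothesis (3) carries along $\dt_0$ and ${\cal G}$ ``required by \ref{d1hpn}'' --- the paper's argument makes essential use of that lemma, while yours does not use it at all (you only use condition (4) quantitatively). Your replacement of \ref{d1hpn} by the simple connectivity of $U(r_j)/T^{r_j}$ (valid: the inclusion $T^{r_j}\hookrightarrow U(r_j)$ is $\pi_1$-surjective via the determinant) is cleaner conceptually, and the block-straightening unitary is a nice observation. Two caveats to flag, though neither is fatal: (a) your $g_j$'s must lie in ${\cal G}$ for (4) to give $\|E_j(t)-e_j\|<\dt_0/2$, whereas the paper's ${\cal G}$ is fixed by \ref{d1hpn}; you can enlarge ${\cal G}$, but in the application (Theorem~\ref{d2diag}) those bump functions vary with the triangle, so the enlargement must be engineered to remain finite and uniform --- in the paper this is sidestepped because the analogous closeness of $\sum_{s\in S_j}p_s$ to $e_j$ is supplied by Lemma~\ref{d1local} at the point where \ref{extension} is invoked, not inferred from (4); and (b) your final estimate needs $\|\bar u-1\|$ to be $O(\ep/\max_{{\cal F}}\|f\|)$, i.e. $\dt_0$ must also be small relative to $\ep/\max_{{\cal F}}\|f\|$; this is harmless since one may always shrink $\dt_0$ in \ref{d1hpn}, but it is an extra constraint absent from the paper's proof, where the homotopy estimate $\|U_s^*\phi_1(f)U_s-\phi_1(f)\|<\ep/2$ comes directly from \ref{d1hpn} without requiring $U_s$ close to $1$. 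With those two points addressed, your argument gives a somewhat shorter and more transparent proof of essentially the same extension statement.
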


\begin{proof}
 Let $C_+$ be the unit upper semi-circle and $C_-$ be the unit
lower semi-circle. Let $L_+=\{1+a\sqrt{-1}: -1\le a\le 0\},$
$L_-=\{-1+a\sqrt{-1}: -1\le a\le 0\}$ and let $C_+'=\{z-\sqrt{-1}:z\in
C_+\}.$ Denote by $\Omega$ the compact subset of the plane with boundary
consisting of $C_+, C_+',L_+$ and $L_-.$

For each $i$ ($1\le i\le n$), define $\beta_i: \partial \Omega \to
X$ as follows:
$$
\beta_i(y)=\begin{cases} \alpha_i(y), & \text{if $y\in C_+$;}\\
                                        \alpha_i(-1) &\text{if $y\in L_-$;}\\
                                        \alpha_i(1) &\text{if $y\in L_+$} \andeqn\\
                                        \alpha_i(e^{-\sqrt{-1}\theta}) &\text{if $y=e^{\sqrt{-1} \theta}-\sqrt{-1}$.} \end{cases}
 $$

Note that, by (\ref{ext-2}),  $\beta_i(y)\in Z_i.$  Since $Z_i$ is
an absolute retract, there is a continuous map ${\bar \beta}_i:
\Omega\to Z_i$ such that ${\bar \beta}_i|_{\partial
\Omega}=\beta_i,$ $i=1,2,...,n,$ Define $\Phi_1: C(X)\to C(\Omega,
M_n)$ by
\beq\label{ext-10+1}
\Phi_1(f)(y)=\sum_{i=1}^n
f({\bar\beta}_i(y))p_i(e^{\sqrt{-1}\theta})
\eneq
for $y=e^{\sqrt{-1}\theta}-a\sqrt{-1}$ for some $-1\le a\le
0$ and $0\le \theta\le \pi,$ and for all $f\in C(X).$  By (3),
\beq\label{ext-10+2}
\|\Phi(f)(y)-\phi(f)(\sqrt{-1})\|<\min\{\dt_0/2, \ep/4\} \rforal
y\in \Omega.
\eneq
Since $p_i$ has rank one, there exists a unitary $u\in C(C_+, M_n)$
such that
\beq\label{ext-10}
u(e^{\sqrt{-1}
t})^*p_i(e^{\sqrt{-1}t})u(e^{\sqrt{-1}t})=p_i(e^{-\sqrt{-1}t})\rforal
t\in [0, \pi],
\eneq
$i=1,2,...,n.$
Define $\phi_1, \phi_2: C(X)\to C(C_+, M_n)$ by
\beq\label{ext-11}
\phi_1(f)(e^{\sqrt{-1}\theta})&=&\sum_{i=1}^n
f(\af_i(e^{-\sqrt{-1}\theta}))p_i(e^{\sqrt{-1}\theta})
\andeqn\\\label{ext-12}
\phi_2(f)(e^{\sqrt{-1}\theta})&=&\sum_{i=1}^nf(\alpha_i(e^{-\sqrt{-1}\theta}))p_i(e^{-\sqrt{-1}\theta})
\eneq
for all $0\le \theta\le \pi.$ Then,
\beq\label{ext-13}
u(e^{\sqrt{-1}\theta})^*\phi_1(f)(e^{\sqrt{-1}\theta})u(e^{\sqrt{-1}\theta})=\phi_2(f)(e^{\sqrt{-1}\theta})
\eneq
for all $0\le \theta\le \pi.$ By (3),
\beq\label{ext-14}
\max_{0\le \theta \le
\pi}\|\phi_1(f)(e^{\sqrt{-1}\theta})-\phi(f)(e^{\sqrt{-1}\theta})\|<\min\{\dt_0/2,
\ep/4\}.
\eneq
By (4), one has that
\beq\label{ext-15}
\|\phi_2(f)-\phi(f)\|_{C_+}<\dt_0/2 \tforal f\in {\cal
G}.
\eneq
Note that
\beq\label{ext-16}
\phi_1(f)(-1)=\phi_2(f)(-1)\andeqn \phi_1(f)(1)=\phi_2(f)(1)
\eneq
for all $f\in C(X).$ It follows from \ref{d1hpn} that there exists a
continuous path of unitaries  $\{U_s: s\in [0,1]\}\subset C(C_+,
M_n)$ such that $U_1=u,\,\,\, U_0=1_{M_n}$ and, for all $s\in
[0,1],$
\beq\label{ext-17}
\|U_s^*\phi_1(f)U_s-\phi_1\|_{C_+}<\ep/2\tforal f\in {\cal F}.
\eneq
Moreover,
\beq\label{ext-18}
U_s(-1)^*\phi_1(f)(-1)U_s(-1)=\phi_1(f)(-1)\andeqn
U_s(1)^*\phi_1(f)(1)U_s(1)=\phi_1(f)(1)
\eneq
for all $f\in C(X).$

Let $C_-'=\{z-2\sqrt{-1}: z\in C_-\},$ $L_-'=\{-1+b\sqrt{-1}: -2\le
b\le -1\}$ and $L_+'=\{1+b\sqrt{-1}: -2\le b\le -1\}.$ Denote by
$\Omega_1$ the compact subset with the boundary consisting of
$C_+',L_-', L_+'$ and $C_-'.$ Define $\Phi_2: C(X)\to C(\Omega_1,
M_n)$ as follows: If
$y=(1-s)(e^{\sqrt{-1}\theta})-\sqrt{-1})+s(e^{-\sqrt{-1}\theta}-2\sqrt{-1}),$
then define
\beq\label{ext-19}
\Phi_2(f)(y)=U_s(e^{\sqrt{-1}\theta})^*\phi_1(f)(e^{\sqrt{-1}\theta})U_s(e^{\sqrt{-1}\theta})
\eneq
for all $s\in [0,1]$ and $0\le \theta\le \pi.$ Then
\beq\label{ext-19+}
\Phi_2(f)(y)=\sum_{i=1}^n
f(\af_i(e^{-\sqrt{-1}\theta}))(U_s^*p_iU_s)(e^{\sqrt{-1}\theta})
\eneq
for all $f\in C(X),$ if
$y=(1-s)(e^{\sqrt{-1}\theta})-\sqrt{-1})+s(e^{-\sqrt{-1}\theta}-2\sqrt{-1}),$ $s\in
[0,1]$ and $0\le \theta\le \pi.$

Note that
\beq\label{ext-20}
\Phi_2(f)(e^{\sqrt{-1}\theta}-\sqrt{-1})=\Phi_1(f)(e^{\sqrt{-1}\theta}-\sqrt{-1}),\\\label{ext-21}
\Phi_2(f)(e^{\sqrt{-1}\theta}-2\sqrt{-1})=\phi(f)(e^{-\sqrt{-1}\theta})\\\label{ext-22}
\Phi_2(f)(y)=\phi(f)(-1)\andeqn \Phi_2(f)(y')=\phi(f)(1)
\eneq
for any $y\in L_-',$ $y'\in L_+'$ and $0\le \theta\le \pi.$ Let
$\Omega_2=\Omega\cup \Omega_1.$ By (\ref{ext-20}), one can define
$\Phi: C(X)\to C(\Omega_2, M_n)$ by
\beq\label{ext-23}
\Phi(f)(y)=\Phi_1(f)(y)\,\,\,{\rm if}\,\,\, y\in \Omega\andeqn
\Phi(f)(y)=\Phi_2(f)(y)\,\,\,{\rm if}\,\,\, y\in \Omega_1.
\eneq
Fix $3/4<d_0<1.$ Let
$$
S_-=\{-d_0+(1-d_0)e^{i\theta}: |\theta|\le \pi\}\andeqn
S_+=\{d_0+(1-d_0)e^{i\theta}: |\theta|\le \pi\}
$$
be two small circles with the centers $-d_0$ and $d_0,$ respectively. Let $A$ be the connected subset containing the
origin and bounded by $S_-, S_+, C_+$ and $C_-.$ Denote by $A^{o}$
the interior of $A.$ Then there exists a continuous map $\Gamma:
\Omega_2\to A$ which is a homeomorphism from the interior of
$\Omega_2$ onto $A^o,$ which fixes $C_+,$ maps $C_-'$ onto $C_-$
such that $\Gamma(e^{\sqrt{-1}\theta}-2\sqrt{-1})=e^{\sqrt{-1}\theta}$ and
maps $L_-\cup L_-'$ onto $S_-$ and $L_+\cup L_+'$ onto $S_+.$

Now define $\psi: C(X)\to C(D, M_n)$ by
\beq\label{ext-24}
\psi(f)(y)=\begin{cases} \Phi(f(\Gamma^{-1}(y))), &\text{if $y\in A^o$;}\\
                          \phi(f)(y), & \text{if $y\in C_+$;}\\
                          \phi(f)(y), &\text{if $y\in C_-$;}\\
                          \phi(f)(-1), &\text{if $|y+d_0|\le 1-d_0$;}\\
                          \phi(f)(1), &\text{if $|y-d_0|\le 1-d_0$}
                          \end{cases}
\eneq
for all $f\in C(X).$ That $\psi$ maps $C(X)$ into $C(D, M_n)$
follows from (\ref{ext-10+1}),  (\ref{ext-20}), (\ref{ext-21}) and
(\ref{ext-22}). By (\ref{ext-10+1}) and (\ref{ext-19+}), there are
continuous maps $\gamma_i: D\to X$ and mutually orthogonal rank one
projections $q_1,q_2,...,q_n\in C(D, M_n)$ such that
\beq\label{ext-25}
\psi(f)(y)=\sum_{i=1}^n f(\gamma_i(y))q_i(y)\tforal f\in C(X).
\eneq
Moreover,
\beq\label{ext-26}
\gamma_i|_{\T}=\af_i\andeqn q_i|_{\T}=p_i,\,\,\,i=1,2,...,n
\eneq
It follows (\ref{ext-10+2}) and (\ref{ext-15})
\beq\label{ext-27}
\|\pi_t\circ \phi(f)-\sum_{i=1}^n f(\gamma_i(y))q_i(y)\|<\ep\tforal
f\in {\cal F}.
\eneq

\end{proof}

\begin{thm}\label{d2diag}
Let $X$ be a compact metric space which is a locally absolute retract and let $n\ge 1.$ Suppose that $Y$ is a compact metric space
with ${\rm dim}Y\le 2$   and suppose that $\phi: C(X)\to C(Y, M_n)$ is a unital \hm.
Then, for any $\ep>0$ and any finite subset ${\cal F}\subset C(X),$ there exist continuous
maps $\af_i: Y\to X$ ($1\le i\le n$) and mutually orthogonal rank one projections $e_1,e_2,...,e_n\in C(Y, M_n)$ such that
\beq\label{d2diag-1}
\|\phi(f)-\sum_{i=1}^n f(\af_i)e_i\|<\ep\tforal f\in {\cal F}.
\eneq
\end{thm}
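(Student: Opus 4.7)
The plan is to triangulate $Y$, diagonalize $\phi$ on the 1-skeleton via Lemma \ref{d1diag}, and extend the diagonalization across each 2-cell via Lemma \ref{extension}. First I would reduce to the case that $Y$ is a finite 2-dimensional simplicial complex: since any compact metric space of covering dimension at most $2$ is an inverse limit of such polyhedra, the homomorphism $\phi$ approximately factors through $C(Y_k, M_n)$ for some polyhedron $Y_k$ in the limit, and a 2-dimensional analogue of Theorem \ref{d0semi} (along the lines of Theorem \ref{d1semi}) converts the resulting approximately multiplicative map into an honest homomorphism from $C(X)$ into $C(Y_k, M_n)$.

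With $Y$ a finite 2-complex, choose a triangulation whose mesh is small enough that $\|\phi(g)(y)-\phi(g)(y')\|<\dt$ whenever $y,y'$ lie in a common closed 2-simplex and $g\in{\cal G}$, for a $\dt>0$ and finite subset ${\cal G}\subset C(X)$ dictated by Lemmas \ref{d1local} and \ref{extension}. Apply Lemma \ref{d1diag} to $\phi|_{Y^{(1)}}$ on the 1-skeleton, with all 0-cells taken as the distinguished evaluation points, to obtain continuous maps $\af_i:Y^{(1)}\to X$ and mutually orthogonal rank one projections $p_i\in C(Y^{(1)}, M_n)$ that approximately diagonalize $\phi$ on $Y^{(1)}$, with exact equality at every vertex. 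For each closed 2-cell $\sigma$, pick a boundary vertex $v_\sigma$; the homomorphism $\phi(\cdot)(v_\sigma):C(X)\to M_n$ has finite spectrum $\{x_1,\ldots,x_m\}$ with spectral projections $\{e_1,\ldots,e_m\}$, and the 1-skeleton diagonalization at $v_\sigma$ refines this into a partition $\{S_1,\ldots,S_m\}$ of $\{1,\ldots,n\}$ defined by $\af_i(v_\sigma)=x_j$ iff $i\in S_j$. Because $\partial\sigma$ is connected and $\phi|_{\partial\sigma}$ is uniformly close to $\phi(\cdot)(v_\sigma)$ by the mesh choice, Lemma \ref{d1local} propagates this partition along $\partial\sigma$ and confirms hypothesis (3) of Lemma \ref{extension}; hypotheses (1), (2) and (4) are immediate from the construction, while the absolute-retract neighborhoods $Z_i$ demanded by (3) are furnished by the hypothesis that $X$ is locally absolute retract.

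Invoking Lemma \ref{extension} on each $\sigma$ produces extensions $\gamma_i^\sigma:\sigma\to X$ and rank one projections $q_i^\sigma\in C(\sigma, M_n)$ that agree with $\af_i|_{\partial\sigma}$ and $p_i|_{\partial\sigma}$ on the boundary and satisfy $\|\phi(f)|_\sigma-\sum_i f(\gamma_i^\sigma)q_i^\sigma\|<\ep$ for $f\in{\cal F}$. Since adjacent 2-cells share boundary data inherited from the 1-skeleton diagonalization, the pieces glue to give global continuous maps $\af_i:Y\to X$ and mutually orthogonal rank one projections $e_i\in C(Y, M_n)$ witnessing (\ref{d2diag-1}). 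The principal obstacle is the constant-chasing: selecting a single tuple $(\ep',\dt,\eta_1,\eta_2,{\cal G})$ that simultaneously fits Lemmas \ref{d1diag}, \ref{d1local} and \ref{extension} on every 2-simplex, and in particular aligning the vertex diagonalizations produced by Lemma \ref{d1diag} with the absolute-retract neighborhoods prescribed in hypothesis (3) of Lemma \ref{extension}; a secondary difficulty is the initial reduction to a finite polyhedron, which relies on a 2-dimensional enhancement of the almost-to-true-homomorphism perturbation of Theorem \ref{d0semi}.
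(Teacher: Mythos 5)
Your reduction to a finite $2$-polyhedron $Y_k$ rests on a $2$-dimensional analogue of Theorem~\ref{d0semi}/Lemma~\ref{d1semi} (that almost-multiplicative unital maps into $C(Y_k,M_n)$ are near true homomorphisms), which you do not prove and which does not appear in the paper. Section~7 asserts in passing that such a statement can be proved, but gives no proof, and the natural route to it (mimicking the derivation of Lemma~\ref{d1semi} from Lemma~\ref{d1twopt}) would use a $2$-dimensional gluing lemma that is in effect Theorem~\ref{d2diag} itself, so the reduction is at best delicate and at worst circular. That is the decisive gap, not a ``secondary difficulty,'' and the paper's proof is arranged precisely to sidestep it. Rather than producing a genuine homomorphism into $C(Y_k,M_n)$ and then diagonalizing, the paper fixes a fine open cover of $Y$ of order at most $2$, passes to its nerve $S(r)$, and builds a diagonal homomorphism $\psi:C(X)\to C(S(r),M_n)$ directly: Lemma~\ref{d1twopt} supplies on each $1$-simplex a path of diagonal homomorphisms into $M_n$ whose endpoint values are exactly $\phi(\cdot)(y_i)$ and $\phi(\cdot)(y_j)$, and Lemma~\ref{extension} extends over each $2$-simplex with exact boundary agreement, so $\psi$ coincides with $\phi$ at every vertex by construction. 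Closeness of the pullback $J_{m,\infty}\circ\psi_m$ to $\phi$ on all of $Y$ is then obtained from the inverse-limit representation of $Y$ as the limit of these nerves, via a three-term triangle inequality through a nearby vertex; no perturbation theorem over a $2$-dimensional codomain is ever invoked.

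A smaller problem: you apply Lemma~\ref{d1diag} to the $1$-skeleton, but that lemma requires $X$ to be a connected finite CW complex, which is strictly stronger than the theorem's hypothesis that $X$ be a locally absolute retract compact metric space (for instance, an intersection of finitely many closed balls in the Hilbert cube need not be a CW complex). You should instead apply Lemma~\ref{d1twopt} edge by edge, as the paper does; that lemma is stated for locally path connected compact metric $X$ and furnishes exactly the endpoint data needed to glue across shared vertices. With that substitution, your sketch of the $2$-cell step via Lemmas~\ref{d1local} and~\ref{extension} does track the paper's mechanism on a single fixed nerve, but the initial passage from $Y$ to a finite polyhedron remains the unresolved gap.
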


\begin{proof}
Fix $\ep>0$ and a finite subset ${\cal F}\subset C(X).$ Let
$\dt_0>0$ (in pace of $\dt$) and ${\cal G}_1\subset C(X)$ (in place
of ${\cal G}$) be a finite subset associated with $\ep/16$ and
${\cal F}$ required by \ref{d1hpn} (for the given $n$).  One may
assume that ${\cal F}\subset {\cal G}_1.$ Let $\eta>0$ be such that
\beq\label{d2diag-2}
|f(x)-f(x')|<\min\{\dt_0/2,\ep/16\}\tforal f\in {\cal G}_1,
\eneq
provided that ${\rm dist}(x, x')<\eta.$ Since $X$ is locally
absolute retract and compact, there exists $\eta_1>0$ such that, for
any $x\in X,$ $B(x, \eta_1)\subset Z_x\subset B(x, \eta/2),$ where
$Z_x$ is a compact subset which  is also an absolute retract. For
each $y\in Y,$ let $\dt_1(y)>0$ (in place of $\dt$) and ${\cal
G}_2'(y)\subset C(X)$ (in place of ${\cal G}$) be a finite subset
associated with $\min\{\eta_0/3, \ep/16\}$ (in place of $\ep$) and
$\pi_y\circ \phi$ required by \ref{d1local}.

For each $y\in Y,$ let $\dt_2(y)>0$ (in place of $\dt$) and ${\cal G}_2(y)\subset C(X)$
(in place of ${\cal G}$) be a finite subset associated with $\min\{\eta_0/3, \ep/6\}$
(in place of $\ep$) and $\pi_y\circ \phi$ (in place of $\phi_1$) required
by \ref{d1twopt}. Without loss of generality, to simplify notation, we may assume
that $\dt_1(y)\le \dt_2(y)$ and
${\cal G}_1\cup {\cal G}_2'(y)\subset {\cal G}_2(y).$


For each $y,$ there exists $d'(y)>0$ such that
\beq\label{d2diag-3}
\|\pi_y\circ\phi(g)-\pi_{y'}\circ\phi(g)\|<\min\{\dt_0/2,\dt_2(y)/3, \ep/16\}\tforal g\in {\cal G}_2(y),
\eneq
provided that ${\rm dist}(y,y')<d'(y).$

Fix $r>0.$ For each $y\in Y,$ let $d(y)=d'(y)r.$
 Now $\cup_{y\in Y} B(y, d(y)/12)\supset Y.$ Let
$y_1, y_2,...,y_K\in Y$ be a finite subset such that $\{B(y_i,
d(y_i)/12): i=1,2,...,K\}$ covers $Y.$ Moreover, one may assume that
the order of the cover $\le 2.$ One builds a simplicial complex as
follows: $y_1,y_2,...,y_N$ are vertices and $0$-simplexes, and
$y_{i_1}y_{i_2}$ or $y_{i_1}y_{i_2}y_{i_3}$ form a $1$-simplex (or
$2$-simplex) if and only if
\beq\label{d2diag-4}
&&B(y_{i_1}, d(y_{i_1})/12)\cap B(y_{i_2},
d(y_{i_2})/12)\not=\emptyset\\\label{d2diag-4-} && {\rm (}\andeqn
\,B(y_{i_1} d(y_{i_1})/12)\cap B(y_{i_2}, d(y_{i_2})/12)\cap
B(y_{i_3}, d(y_{i_3}/12))\not=\emptyset. {\rm )}
\eneq

Denote by ${\cal S}(r)$ the  simplicial complexes constructed this
way and by $S(r)$ the underline polyhedra. Moreover, if $y_iy_j$  is
a $1$-simplex, then
\beq\label{d2diang-4+}
{\rm dist}(y_i, y_j)<\max\{d(y_i)/6, d(y_j)/6\}.
\eneq

If $y_j$ is a vertex, then there are points $\af_j^{(k)}(y_j)\in X,$ $k=1,2,...,n,$
and mutually orthogonal rank one projections
$p^{j}_1, p^{j}_2,...,p^{j}_n\in M_n$ such that
\beq\label{d2diag-4+1}
\phi(f)(y_j)=\sum_{k=1}^n f(\af_j^{(k)}(y_j))p_k^j\rforal f\in C(X).
\eneq

Denote by $I_{i,j}$ the line segment defined by $y_iy_j.$
Therefore, by applying \ref{d1twopt}, there are continuous maps
$\af_{i,j}^{(k)}: I_{i,j}\to X,$ $k=1,2,...,n,$ and mutually
orthogonal rank one projections $p_1^{i,j},
p_2^{i,j},...,p_n^{i,j}\in C(I_{i,j}, M_n)$ such that
\beq\label{d2diag-5}
\sum_{k=1}^n f(\af_{i,j}^{(k)}(y_s))p_k^{i,j}(y_s)=\pi_{y_s}\circ \phi(f)\tforal f\in C(X),\\
\|\pi_{y_{s'}}\circ
\phi(g)-\sum_{k=1}^nf(\af_{i,j}^{(k)}(t))p_k^{i,j}(t)\|<\min\{\dt_2(y_{s'})/2,
\ep/16\} \tforal g\in {\cal G}_2(y_{s'}),
\eneq
where $s=i,j$ and $s'=i,$  or $j$ if $\max \{d(y_i)/6,
d(y_j)/6\}=d(y_i)/6,$ or $\max \{d(y_i)/6, d(y_j)/6\}=d(y_j)/6.$

Let $I(r)=\cup I_{i,j}$ be the union of  all $0$-simplex and $1$-simplexes in $S(r).$ One obtains a
unital \hm\, $\Phi': C(X)\to C(I(r), M_n)$ defined by
\beq\label{d2diag-6}
\pi_t\circ \Phi'(f)=\sum_{k=1}^n f(\af_{i,j}^{(k)}(t))p_k^{i,j}(t)
\eneq
if $t\in I_{i,j},$ and
\beq\label{d2diag-6+}
\pi_{y_j}\circ \Phi'=\pi_{y_j}\circ \phi.
\eneq
Define  $\af_k': I(r)\to X$ by $\af_k'(t)=\af_{i,j}^{(k)}(t)$ if $t\in I_{i,j}$ and define
projections $p_i', p_2',...,p_n'$ in $C(I(r), M_n)$ by $p_k'(t)=p_k^{i,j}(t)$ if $t\in I_{i,j}.$
Next one extends $\pi_t\circ \Phi'$ on $S(r).$

To do this, one assumes that $y_{i_1}y_{i_2}y_{i_3}$ is a 2-simplex.
Then
\beq\label{d2diag-6++}
{\rm dist}(y_{i_j}, y_{i_{j'}})<d(y_{i_j})/6 \rforal j=1,2,3
\eneq
and for one of some $j'\in \{1,2,3\}.$ Without loss of generality,
one may assume that $3=j'.$

One identifies the $2$-polyhedron $K_{i_1,i_2, i_3}$ determined  by
$y_{i_1}y_{i_2}y_{i_3}$  with the unit disk and identifies $y_{i_1}$
with $1,$ $y_{i_2}$ with $-1$ and $y_{i_3}$ with $\sqrt{-1}.$  Here
the line segments determined by $y_{i_1}y_{i_2},$ $y_{i_1}y_{i_3}$
and $y_{i_2}y_{i_3}$ with the arc with end points $-1$ and $1,$ the
arc with end points $1$ and $\sqrt{-1},$ and the arc with end points
$\sqrt{-1}$ and $-1,$ respectively.

Let $\Psi$ be the restriction of $\Phi'$ on the unit circle $\T$
(with the above mentioned identification). Then it is clear that
$\Psi$ satisfies (1), (2) and (4) in \ref{extension} (by replacing
$\phi$ by $\Psi$) for $\ep/4$ (in place of $\ep$) and ${\cal F}. $
By the choice of each $\dt_1(y)$ and by \ref{d1local}, (3) is also
satisfied (for $\Psi$). By applying \ref{extension}, and identifying
the unit dick  $D$ with $K_{i_1,i_2, i_3}$, one obtains a unital
\hm\, $\Phi_{i_1,i_2, i_3}: C(X)\to C(K_{i_1,i_2, i_3}, M_n),$
continuous maps $\af_{i_1, i_2, i_3}^{(k)}: K_{i_1, i_2, i_3}\to X$
and mutually orthogonal rank one projections $\{p_k^{i_1,i_2,i_3}:
k=1,2,...,n\}\subset C(K_{i_1, i_2, i_3}, M_n)$ such that (where
$K_{i_j, i_{j'}}$ is the 1-simplex determined by
$y_{i_j}y_{i_{j'}}$)
\beq\label{d2diag-7}
\Phi_{i_1,i_2, i_3}(f)=\sum_{k=1}^n f(\af_{i_1,i_2,i_3}^{(k)})p_k^{i_1,i_2, i_3}\tforal f\in C(X),\\
\Phi_{i_1,i_2, i_3}(f)|_{K_{i_j,i_{j'}}}=\sum_{k=1}^n f(\af_{i_j,i_{j'}}^{(k)})p_k^{i_j,i_{j'}}\tforal f\in C(X)\andeqn\\
\|\Phi_{i_1,i_2, i_3}(f)(t)-\sum_{k=1}^n f(\af_{i_1,i_2,i_3}^{(k)})(s)p_k^{i_1,i_2, i_3}(s)\|<\ep/2
\eneq
for all $t$ in the boundary of $K_{i_1, i_2, i_3},$ $s\in K_{i_1,
i_2, i_3}$ and for all $f\in {\cal F}.$ Define $\af_k: Y\to S(r)$ by
$\af_k(y_j)=y_j,$ $\af_k(y)=\af_{i,j}^{(k)}(y)$ if $y$ is in the
polyhedron determined by $y_iy_j$ and $\af_k(y)=\af_{i_1,i_2,
i_3}^{(k)}(y)$ if $y\in K_{i_1, i_2, i_3}.$ Define $p_k\in C(Y,
M_n)$ by $p_k(y_j)=p_k^j,$ $p_k(y)=p_k^{i,j}(y)$ if $y\in K_{i,j}$
and $p_k(y)=p_k^{i_1,i_2, i_3}(y)$ if $y\in K_{i_1,i_2,i_3}.$ Define
$\psi: C(X)\to C(S(r), M_n)$ by
\beq\label{d2diag-8-1}
\psi(f)=\sum_{k=1}^n f(\af_k)p_k\tforal f\in C(X).
\eneq
Note that
$\psi(f)(t)=\Phi_{i_1, i_2, i_3}(f)(t)$ if
$t\in K_{i_1,i_2, i_3}$ and $\psi(f)(t)=\sum_{k=1}^n f(\af_{i,j})(t)p_k^{i,j}(t)$ if $t\in K_{i,j}.$
Moreover,
\beq\label{d2diag-8}
\psi(f)(y_j)=\phi(f)(y_j)\tforal f\in C(X),\,\,\,j=1,2,...,K, \andeqn\\\label{d2diag-8+}
\|\psi(f)(y)-\psi(f)(y_j)\|<\ep/4\tforal f\in C(X)
\eneq
and for some $j$ so that $y$ is in a simplex with $y_j$ as one of the vertex.

Now  one changes $r.$  To simplify notation, one may assume that
${\rm diam}(Y)\le 1.$  One obtains a sequence of open covers ${\cal
U}_m=\{B_j^{(m)}\}=\{B(y_j^{(m)}, d(y_j^{(m)},r_m)/12):
j=1,2,...,K(m)\}$ (with $d(y, r_m)=d'(y)r_m$) such that:
 (i) the
order of the cover is at most $2,$ and,

(ii)
\beq\label{d2diag++}
r_{m+1}<\min\{\ep_m/2,
\min\{d'(y_j^{(m)})/2^{m+1}: 1\le j\le K(m)\}\},
\eneq
 where $\ep_m$ is
a Lebesque number for the cover ${\cal U}_m.$ It follows from (ii)
that (iii) holds:  if $B_{j_1}^{(m+1)}\cap B_{j_2}^{(m+1)}\cap\cdots
B_{j_l}^{(m+1)}\not=\emptyset,$ then there exists $k\le K(k)$ such
that $B_{j_s}^{(m+1)}\subset B_k^{(m)},$ $s=1,2,...,l.$ For each
$m=1,2,...,$ let ${\cal S}_m$ be the simplicial complex  constructed
from points $\{y_1, y_2,...,y_{K(m)}\}$ as above, and let $S(r_m)$
be the underline polyhedra of dimension at most 2 (see
(\ref{d2diag-4}) and (\ref{d2diag-4-})). Denote by $\psi_m: C(X)\to
C(S(r_m), M_n)$ the unital \hm\, constructed above using $r=r_m,$
$m=1,2,....$

To specify the map $\pi_m^{m+1}: S(r_{m+1})\to S(r_m),$ for each $j$
($\le K(m+1)$), let $y_j^{(m+1)}$ be one of the vertex. By virtue of
(iii) above, the family
$$
{\cal U}_{j,m}=\{ B_{j'}^{(m)}: B_j^{(m)}\subset B_{j'}^{(m)}\}
$$
is non-empty. Since $\cap { B_{j'}^{(m)}\in {\cal U}_{j,m}}\not=\emptyset,$ the vertices of $S_m$ which correspond to the members of ${\cal U}_{m,j}$ span a simplex $K^{(j,m)}\in {\cal S}_m.$ Define
\beq\label{d2diag-9}
\pi_m^{m+1}(y_j^{(m+1)})=b(K^{(j,m)}),
\eneq
where $b(K^{(j,m)})$ denotes the barycenter of $K^{(j,m)}.$
As in the proof 1.13.2 of \cite{RE}, this implies that, for every simplex $S\in {\cal S}(r_{m+1}),$
the images of vertices of $S$ under $\pi_m^{m+1}$ are contained in a simplex $T\in {\cal S}(r_m).$

This construction leads to an inverse limits $\lim_{\leftarrow m}
(S_{m+1}, \pi_m^{m+1})$ which is homeomorphic to $Y$ (see the proof
of 1.13.2 of \cite{RE}).  One identifies these two spaces. Denote by
$\pi_m^{\infty}: Y\to S_m$ the continuous map induced by the inverse
limit.

Denote by $J_m: C(S(r_m), M_n)\to C(S(r_{m+1}), M_n)$ the unital
\hm\, defined by
\beq\label{d2diag-10}
J_m(f)(y)=f(\pi_m^{m+1}(y))\tforal f\in C(S(r_m), M_n),
\eneq
$m=1,2,....$ Denote by $J_{m, \infty}: C(Y, M_n)\to C(S(r_m), M_n)$
the unital \hm\, induced by the inductive limit $C(Y,
M_n)=\lim_{m\to\infty}(C(S(r_m), M_n), J_m)$ which can also be
defined by $J_{m, \infty}(f)(y)=f(\pi_m^{\infty}(y))$ for all $f\in
C(Y, M_n).$

Fix $y\in Y$ and $m.$ There is a simplex $K_m\in {\cal S}(r_m)$ such that
$\pi_m^{\infty}(y)\in K_m$ and therefore there exists a vertex $y_{j(m)}^{(m)}$ such that
\beq\label{d2diag-11}
{\rm dist}(y, y_{j(m)}^{(m)})<d'(y_{j(m)}^{(m)})/6\cdot 2^m
\eneq
(see for example the proof of 1.13.2 of \cite{RE}).
Let $\psi_m: C(X)\to S(r_m)$ be the unital \hm\, construct above (by replacing $r$ by $r_m$).
So
$$
\psi_m(f)=\sum_{k=1}^n f(\af_k)p_k\rforal f\in C(X),
$$
where $\af_k$ and $p_k$ ($k=1,2,...,n$) as constructed above (with $r$ replaced by $r_m$).

One estimates, by (\ref{d2diag-11}), (\ref{d2diag-3}), (\ref{d2diag-8}) and (\ref{d2diag-8+}), that
\beq\nonumber
\|\phi(f)(y)-J_{m, \infty}\circ \psi_m(f)(y)\| &\le &
\|\phi(f)(y)-\phi(f)(y_{j(m)}^{(m)})\|+\|\phi(f)(y_{j(m)}^{(m)})-\psi_m(f)(y_{j(m)}^{(m)})\|\\\nonumber
&&+\|\psi_m(f)(y_{j(m)}^{(m)})-\psi_m(f)(\pi_m^{\infty}(y))\|\\\label{d2diag-12}
&<& \ep/16+0+\ep/4<\ep
\eneq
for all $f\in {\cal F}.$ Note that
\beq\label{2ddiag-13}
J_{m, \infty}\circ \psi_m(f)=\sum_{k=1}^n f(\af_k\circ \pi_m^{\infty})p_k
\eneq
for all $f\in C(X).$
This completes the proof.

\end{proof}

\begin{df}
Let $Y$ be a compact metric space  and $C\subset C(Y, M_n)$ be a
unital \SCA. $C$ is said to be {\it diagonalized }if there are mutually
orthogonal rank one projections $\{p_1, p_2,..,p_n\}\subset C(Y,
M_n)$ such that $p_i$ commutes with every element in $C,$ $i=1,2,...,n.$

\end{df}

\begin{thm}\label{M1}
Let $X$ be a compact metric space and let $n\ge 1.$ Suppose that $Y$
is a compact metric space with ${\rm dim}\, Y\le 2$ and suppose that
$\phi: C(X)\to C(Y, M_n)$ is a unital \hm. Then, for any $\ep>0$ and
any compact subset ${\cal F}\subset C(X),$ there is a unital
commutative \SCA\, $B\subset C(Y, M_n)$ which can be diagonalized
and
\beq\label{M1-1}
{\rm dist}(\phi(f), B)<\ep\tforal f\in {\cal F}.
\eneq

\end{thm}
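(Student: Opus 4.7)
The plan is to reduce the general case to Theorem~\ref{d2diag} by embedding $X$ into a sufficiently nice ambient space. Every compact metric space embeds as a closed subset of the Hilbert cube $Z = [0,1]^{\mathbb N}$, which is a compact absolute retract, hence locally absolute retract in the sense of Definition~\ref{LAR}. The restriction map $r : C(Z) \to C(X)$ is a surjective unital $*$-homomorphism (by Tietze's extension theorem), so $\phi' := \phi \circ r : C(Z) \to C(Y, M_n)$ is a unital \hm\ to which Theorem~\ref{d2diag} applies.

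Since ${\cal F} \subset C(X)$ is compact in norm, choose a finite $\ep/3$-net ${\cal F}_0 \subset {\cal F}$. For each $f_0 \in {\cal F}_0$, use surjectivity of $r$ to lift to some $g_{f_0} \in C(Z)$ with $r(g_{f_0}) = f_0$; set ${\cal G} = \{g_{f_0} : f_0 \in {\cal F}_0\} \subset C(Z)$. Applying Theorem~\ref{d2diag} to $\phi'$ with tolerance $\ep/3$ and finite subset ${\cal G}$ produces continuous maps $\beta_i : Y \to Z$ and mutually orthogonal rank one projections $e_1,\dots,e_n \in C(Y, M_n)$ such that
\[
\bigl\| \phi'(g) - \sum_{i=1}^n g(\beta_i) e_i \bigr\| < \ep/3 \tforal g \in {\cal G}.
\]

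Let $B = \{\sum_{i=1}^n c_i e_i : c_i \in C(Y)\} \subset C(Y, M_n)$. Since the $e_i$ are mutually orthogonal rank one projections summing to $1_{C(Y, M_n)}$, $B$ is a unital commutative \SCA\ diagonalized by $\{e_1,\dots,e_n\}$, and every element of the form $\sum_{i=1}^n g(\beta_i) e_i$ with $g \in C(Z)$ lies in $B$. For any $f \in {\cal F}$, pick $f_0 \in {\cal F}_0$ with $\|f - f_0\| < \ep/3$; using $\phi(f_0) = \phi'(g_{f_0})$ we obtain
\[
{\rm dist}(\phi(f), B) \le \|\phi(f) - \phi(f_0)\| + \bigl\| \phi(f_0) - \sum_{i=1}^n g_{f_0}(\beta_i) e_i \bigr\| < \tfrac{\ep}{3} + \tfrac{\ep}{3} < \ep.
\]
The heavy lifting is already done in Theorem~\ref{d2diag}; the one conceptual obstacle is that the assignment $f_0 \mapsto g_{f_0}$ is not multiplicative, so the approximants $\sum_i g_{f_0}(\beta_i) e_i$ do not in general assemble into a $*$-\hm\ out of $C(X)$. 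This is precisely why the conclusion must be weakened from approximation by a diagonalizable \hm\ (as in Theorem~\ref{d2diag}) to approximation by a diagonalizable commutative \SCA.
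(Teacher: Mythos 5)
Your proof is correct and takes essentially the same route as the paper: embed $X$ in a compact locally absolute retract, pull $\phi$ back along the restriction $*$-\hm, and apply Theorem~\ref{d2diag}. The only difference is cosmetic: the paper approximates $X$ from the outside by finite unions $F_m$ of closed balls in the Hilbert cube and works with the inductive limit $C(X)=\varinjlim C(F_m)$, whereas you lift directly to $C(Z)$ for $Z$ the Hilbert cube itself via Tietze's extension theorem, which is a mild streamlining of the same idea. One small caveat about your justification: it is not a general fact that a compact absolute retract is a locally absolute retract in the sense of Definition~\ref{LAR} (which asks that small \emph{closed} neighborhoods be absolute retracts); what makes the Hilbert cube work---and what the paper's own proof invokes---is that, realized as a compact convex subset of $\ell^2$, every closed ball in the Hilbert cube is convex and hence an absolute retract. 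So the conclusion you need is true, but the ``compact AR, hence locally AR'' step should be replaced by the convexity argument.
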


\begin{proof}
One may view that $X\subset I,$ where $I$ is the Hilbert cube which
is viewed as a subset of $l^2.$ Note that  Hilbert cube is convex
and every open (or closed)  ball in $I$ is convex and therefore is
absolute retract and locally absolute retract. Thus
$X=\cap_{m=1}^{\infty} F_m,$ where each $F_m$ is a finite union of
closed balls of $I.$ In particular, each $F_m$ is locally absolute
retract. Let $\imath_m: F_{m+1}\to F_m$
($m=1,2,...$) and $\imath_m^{\infty}: X\to F_m$ be the embeddings
Let $j_m: C(F_{m})\to C(F_{m+1})$ defined by $j_m(f)=f(\imath_m)$
and $j_{m, \infty}: C(F_m)\to C(X)$ by
$j_m^{\infty}(f)=f(\imath_m^{\infty})$ for all $f\in C(X).$ Now let
$\ep>0$ and a finite subset ${\cal F}\subset C(X)$ be given. For
each $f\in {\cal F},$ there is $m\ge 1$ and $g_f\in C(F_m)$ such
that
\beq\label{M1-2}
\|j_m^{\infty}(g_f)-f\|<\ep/2\tforal f\in {\cal F}.
\eneq
Let ${\cal G}=\{g_f: f\in {\cal F}\}.$  By considering
$\phi_m=\phi\circ j_m^{\infty},$ one obtains, by applying
\ref{d2diag}, continuous maps $\beta_i: Y\to F_m$ and mutually
orthogonal rank one projections $p_1, p_2,...,p_n\in C(Y, M_n)$ such
that
\beq\label{M1-3}
\|\phi_m(g_f)-\sum_{i=1}^n g_f(\beta_i)p_i\|<\ep/2\tforal f\in {\cal
F}.
\eneq
By combing (\ref{M1-3}) with (\ref{M1-2}), one has
that
$$
\|\phi(f)-\sum_{i=1}^n g_f(\beta_i)p_i\|<\ep
\rforal f\in {\cal F}.
$$
Let $B$ be the commutative \SCA\, generated by $\sum_{i=1}^n
g_f(\beta_i)p_i$ for $f\in {\cal F}.$
\end{proof}

\begin{cor}\label{C1}
Let $Y$ be a compact metric space with ${\rm dim} Y\le 2,$ let $n\ge
1$ be an integer and let $x$ be a normal element. Then, there are $n$
sequences of functions $\{\lambda_k^{(m)}\}$ in $C(Y)$
($k=1,2,...,n$) and there is a sequence of sets of $n$ mutually
orthogonal rank one projections $\{p_1^{(m)},
p_2^{(m)},...,p_n^{(m)}\}\subset C(Y, M_n)$ such that
$$
\lim_{m\to\infty} \|x-\sum_{k=1}^n \lambda_k^{(m)}p_k^{(m)}\|=0.
$$
Moreover, if $x$ is self-adjoint, $\lambda_k^{(m)}$ can be chosen to
be real and if $x$ is a unitary, $\lambda_k^{(m)}$ can be chosen so
that $|\lambda_k^{(m)}|=1,$ $k=1,2,...,n$ and $m=1,2,....$
\end{cor}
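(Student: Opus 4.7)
The plan is to deduce all three cases as immediate applications of Theorem \ref{d2diag} to the unital \hm\ $\phi\colon C(X)\to C(Y,M_n)$ defined by continuous functional calculus $\phi(f):=f(x)$, choosing the ambient compact metric space $X$ appropriately in each case: for general normal $x$, take $X$ to be a closed disk in $\C$ containing $sp(x)$; for self\-/adjoint $x$, take $X=[-\|x\|,\|x\|]\subset\R$; for unitary $x$, take $X=\T$. In every case $X\supset sp(x)$, so $\phi$ factors through $C(sp(x))$ and is a well\-/defined unital \hm, and the coordinate inclusion $z\in C(X)$ (i.e.\ the function $z\mapsto z$) satisfies $\phi(z)=x$.

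Given any $m\ge 1$, I would apply Theorem \ref{d2diag} with finite subset $\{z\}\subset C(X)$ and tolerance $\ep=1/m$ to obtain continuous maps $\af_k^{(m)}\colon Y\to X$ and mutually orthogonal rank one projections $p_1^{(m)},\dots,p_n^{(m)}\in C(Y,M_n)$ such that
$$
\Bigl\|\,x-\sum_{k=1}^n \af_k^{(m)}\,p_k^{(m)}\,\Bigr\|<\frac{1}{m}.
$$
Setting $\lambda_k^{(m)}:=\af_k^{(m)}\in C(Y)$ (via the inclusion $X\subset \C$) yields the claimed convergence. In the self\-/adjoint case $\lambda_k^{(m)}$ is automatically real\-/valued because $X\subset\R$; in the unitary case $|\lambda_k^{(m)}(y)|=1$ for every $y\in Y$ because $X=\T$.

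The only nonformal point is to check that each of the three chosen spaces $X$ is locally absolutely retract in the sense of Definition \ref{LAR}. For the closed disk and the closed interval this is trivial: both are convex, hence absolute retracts, and the required neighborhoods $Z\subset B(y,\ep_1)$ can be taken to be small closed balls in $X$. For $\T$, each point has arbitrarily small closed arc neighborhoods homeomorphic to a closed interval, which is an absolute retract, so the local condition again holds. I do not anticipate any substantive obstacle here: the entire analytic content has already been absorbed into Theorem \ref{d2diag}, and the corollary merely records its three most familiar specializations.
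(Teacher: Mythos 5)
Your proposal is correct and is precisely the argument the paper implicitly intends: the corollary follows immediately from Theorem \ref{d2diag} by applying it to $\phi(f)=f(x)$ with $X$ taken to be a closed disk containing $sp(x)$, a closed interval, or $\T$, respectively, each of which is readily checked to be locally absolutely retract. The paper itself does not spell out a proof, and your verification of the LAR condition (including for boundary points of the disk and interval, where the neighborhoods are convex intersections, and for $\T$ via closed arcs) covers the only point that could conceivably require comment.
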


\section{Higher dimensional cases}
 In this section, we consider the cases that ${\rm dim} Y\ge 3.$ One
 would hope that the similar argument used in section 6 can repeat
 for higher dimensional space $Y.$ In fact, a version of
 \ref{d1semi} and \ref{d1hpn} can be proved for two dimensional
 spaces. However, the last requests in \ref{d1semi} and \ref{d1hpn}
 can not be improved, for example, in a generalized \ref{d1hpn}, $U(s)$ can not be
 chosen so it exactly commutes with $\phi$ on a given line segment even
 $u$ can. The
 reason is that not every \hm\, to $C([0,1], M_n)$ can be exactly
 diagonalized (see \cite{GP}). This technical problem is fatal as
 one can see from the results of this section. Nevertheless, one has the
 following.

\begin{prop}\label{zerodim}
Let $X$ be a zero dimensional compact metric space, $n\ge 1$ and let
$Y$ be a compact metric space for which every minimal projection in
$C(Y, M_n)$ has rank one or zero at each point of $Y$ (which is the
case if ${\rm dim} Y\le 3$---see \ref{rem7} ).

Then any unital \hm\, $\phi: C(X)\to C(Y, M_n)$ can be approximately
diagonlaized.

\end{prop}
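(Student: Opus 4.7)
The plan is to use the zero-dimensionality of $X$ to replace $\phi$ approximately by a finite sum of projections on $Y$, and then to invoke the hypothesis to diagonalize that sum. Given $\ep>0$ and a finite $\mathcal{F}\subset C(X)$, first choose a clopen partition $X=X_1\sqcup\cdots\sqcup X_K$ and representatives $x_i\in X_i$ so that $|f(x)-f(x_i)|<\ep$ for every $f\in\mathcal{F}$ and every $x\in X_i$. Setting $P_i=\phi(\chi_{X_i})$ yields mutually orthogonal projections in $C(Y,M_n)$ summing to $1$ with $\|\phi(f)-\sum_i f(x_i)P_i\|<\ep$ on $\mathcal{F}$, so the task reduces to approximately diagonalizing $\sum_i f(x_i)P_i$.

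The core step will be to decompose each $P_i$ into a finite orthogonal sum $P_i=\sum_j p_{i,j}$ in $C(Y,M_n)$ of projections of pointwise rank $\le 1$ (equivalently, projections whose clopen support carries them at constant rank one). The hypothesis provides the mechanism: whenever a piece has rank $\ge 2$ at some point it cannot be minimal and admits a non-trivial orthogonal decomposition. The hypothesis also passes to every clopen subalgebra $C(V,M_n)\hookrightarrow C(Y,M_n)$ via an extension-by-zero argument (any decomposition of the extended projection must live in the clopen support and restricts to one of the original), so the iteration can be performed branch by branch; the termination with only finitely many pieces is the main technical point, which I expect to establish by an induction on the locally constant rank profile (which is bounded pointwise by $n$). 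Write $V_{i,j}=\{y:p_{i,j}(y)\neq 0\}$ for the clopen supports of the resulting pieces.

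Let $Y=\bigsqcup_{\ell=1}^L Y_\ell$ be the finite clopen partition of $Y$ generated by $\{V_{i,j}\}$. On each $Y_\ell$ every $p_{i,j}$ is either identically zero or of constant rank one; since $\sum_{i,j}p_{i,j}=1$ and only $n$ mutually orthogonal rank-one projections in $M_n$ can sum to the identity, exactly $n$ pairs $(i,j)$ contribute a rank-one piece on $Y_\ell$. Call this set $A_\ell$ and fix an arbitrary bijection $\sigma_\ell\colon\{1,\ldots,n\}\to A_\ell$. Define $q_k\in C(Y,M_n)$ by $q_k|_{Y_\ell}=p_{\sigma_\ell(k)}|_{Y_\ell}$; clopen pasting gives continuity, and the $q_k$ are mutually orthogonal rank-one projections summing to $1$. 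Define $\alpha_k\colon Y\to X$ by $\alpha_k|_{Y_\ell}=x_i$ whenever $\sigma_\ell(k)=(i,j)$; these are locally constant, hence continuous. A pointwise check on each $Y_\ell$ gives $\sum_{k=1}^n f(\alpha_k(y))q_k(y)=\sum_{(i,j)\in A_\ell}f(x_i)p_{i,j}(y)=\sum_i f(x_i)P_i(y)$, so $\|\phi(f)-\sum_{k=1}^n f(\alpha_k)q_k\|<\ep$ on $\mathcal{F}$, which is the approximate diagonalization sought.

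The main obstacle is the middle step: decomposing each $P_i$ into finitely many rank-$\le 1$ pieces whose supports generate a finite clopen partition of $Y$ using only the hypothesis. The hypothesis guarantees the existence of non-trivial splits at every non-minimal stage, but extracting a finite terminating iteration requires care, since on a compact metric $Y$ the Boolean algebra of clopen sets need not satisfy DCC; the remainder of the argument is clopen bookkeeping on $Y$.
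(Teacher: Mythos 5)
Your approach reproduces the paper's almost exactly: reduce $\phi$ on $\mathcal{F}$ to a finite linear combination $\sum_i f(x_i)P_i$ of mutually orthogonal projections $P_i=\phi(\chi_{X_i})$, decompose each $P_i$ into finitely many mutually orthogonal projections of pointwise rank $\le 1$, and reassemble those pieces into $n$ rank-one projections $q_k$ with locally constant maps $\alpha_k$. (The paper phrases the first reduction by embedding the zero-dimensional $X$ in $\R$ and approximating the coordinate function $x$ by a step function $\sum_j\lambda_j e_j$; your clopen-partition version is the same thing.) Your final reassembly via the common clopen refinement of the supports is exactly what the paper has in mind when it says ``it is easy to find mutually orthogonal rank one projections $p_1,\ldots,p_n$, where each $p_i$ is a sum of some projections $p_{j,k}$.''

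The step you flag as the obstacle is precisely the step the paper takes for granted: it simply writes ``Since each $\phi(e_j)$ is a projection, by the assumption, there are mutually orthogonal projections $\ldots$ such that $p_{j,i}(y)$ has rank either one or zero and $\sum_i p_{j,i}=\phi(e_j)$,'' with no argument. Your worry is well-founded. Non-minimality gives a nontrivial orthogonal split $P=q_1+q_2$, but that split can be \emph{rank-trivial}: $q_1=P\chi_V$ and $q_2=P\chi_{Y\setminus V}$ for some proper nonempty clopen $V$. Such a split does not decrease the rank profile anywhere, so your proposed induction ``on the locally constant rank profile'' does not close, and (as you observe) the Boolean algebra of clopen subsets of a general compact metric $Y$ has no descending chain condition to fall back on. For the case of interest, $\dim Y\le 3$, the finite decomposition into rank-$\le 1$ pieces is true, but the mechanism is not the bare minimality hypothesis; it is the structure behind Remark~\ref{rem7}: over a connected finite CW complex of dimension $\le 3$, a projection of constant rank $r\ge 2$ admits a rank-one subprojection of full support (Husemoller 8.12), which \emph{does} supply a rank-reducing induction, and a general compact metric $Y$ with $\dim Y\le 3$ is an inverse limit of such complexes, so every projection in $C(Y,M_n)$ is unitarily equivalent to one pulled back from a finite stage and can be decomposed there. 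You should make that argument explicit; for an abstract $Y$ satisfying only the stated minimality hypothesis, the finite decomposition is, as far as I can see, asserted without justification in both your proposal and the paper's own proof.
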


\begin{proof}
We may assume that $X\subset \R.$ Choose an element $x\in C(X)$ with
$sp(x)=X.$ If $f\in C(X)$ then one has $f=f(x).$ For any $\dt>0,$
there are mutually orthogonal projections
$\{e_1,e_2,...,e_N\}\subset C(X)$ for which $\sum_{j=1}^N e_j=1$ and
real numbers $\lambda_1, \lambda_2,..., \lambda_N\in X$ such that
\beq\label{zerodim-1}
\|x-\sum_{j=1}^N \lambda_j e_j\|<\dt.
\eneq
Given a finite subset ${\cal F}\subset C(X)$ and $\ep>0,$ by
choosing a sufficiently small $\dt,$ one may assume that
\beq\label{zerodim-2}
\|f(x)-\sum_{j=1}^Nf(\lambda_j)e_j\|<\ep\rforal f\in {\cal F}.
\eneq
Let $\phi: C(X)\to C(Y, M_n)$ be a unital \hm. Then
\beq\label{zerodim-3}
\|\phi(f)-\sum_{j=1}^Nf(\lambda_j)\phi(e_j)\|<\ep\rforal f\in {\cal
F}.
\eneq
Since each $\phi(e_j)$ is a projection, by the assumption, there are
mutually orthogonal projections $\{p_{j,1},
p_{j,2},...,p_{j,r(j)}\}\subset C(Y, M_n)$ such that $p_{j,i}(y)$
has rank either one or zero and
$\sum_{i=1}^{r(j)}p_{j,i}=\phi(e_j),$ $j=1,2,...,N.$ Since
$\sum_{j=1}^N \phi(e_j)=1_{C(Y, M_n)},$ it is easy to find mutually
orthogonal rank one projections $p_1, p_2,...,p_n,$ where each $p_i$
is a sum of some projections $p_{j,k},$ such that $\sum_{i=1}^n
p_i=1_{C(Y, M_n)}.$  Suppose that $p_i=\sum_k p_{j_k, i_k},$
$n=1,2,...,n.$  Let $Y_{j_k, i_k}$ be the clopen set so that
$p_{j_k,i_k}(y)\not=0.$ Define a continuous map $\af_i: Y\to X$ by
$\af_i(y)=\lambda_{j_k}$ if $y\in Y_{j_k, i_k}$ and $\af_i(y)=0$ if
$y$ is not in the support if $p_i.$ Then, by (\ref{zerodim-3}),
\beq\label{zerodim-4}
\|\phi(f)-\sum_{i=1}^n f(\af_i)p_i\|<\ep \tforal f\in {\cal F}.
\eneq

\end{proof}

\begin{rem}\label{rem7}
Let $Y$ be a connected finite CW complex with ${\rm dim }Y=d\le 3.$
Then $[d/2]\le 1.$ Let $p\in M_n(C(Y))$ be a projection with rank $r\ge 2.$ It follows
from 8.12 of \cite{Hu} that there exists a projection $q\le p$ with rank $r-[d/2]$ which is trivial.
Thus $q\not=0.$ Then there is a rank one projection $e\le q\le p.$ Therefore $p$ is not minimal.
This shows that any minimal projection of $M_n(C(Y))$ has rank one. Since every compact metric space is an inverse
limit of finite CW complex, it is easy to see that, if $Y$ is a compact metric space with ${\rm dim Y}\le 3$ and $p\in M_n(C(Y))$
is a minimal projection, then $p(y)=0,$ or $p(y)$ has rank one for any $y\in Y.$

Let $Y$ be a compact metric space with ${\rm dim } Y>3.$ Suppose
that $C(Y, M_n)$ contains a minimal projection $p$ with rank at
least 2. Suppose that $X$ is not connected, say $X$ is a disjoint
union of two clopen subsets $X_1$ and $X_2.$ Choose a unital \hm\,
$\phi_1: C(X_1)\to (1-p)C(Y, M_n)(1-p)$ and a unital \hm\, $\phi_2:
C(X_2)\to pC(Y, M_n)p.$ Define $\phi: C(X)\to C(Y, M_n)$ by
$\phi(f)=\phi_1(f|_{X_1})+\phi_2(f|_{X_2})$ for all $f\in C(X).$
Then $\phi$ can not be possibly approximately diagonalized because
$p$ is a minimal projection. Therefore, in what follows, we mainly
consider the case that $X$ is connected, or, at least the case that
${\rm dim} X\ge 1.$

\end{rem}

\begin{prop}\label{PI1}
Let $Y$ be a compact metric space for which $\pi^1(Y)$ is trivial
and $K_1(C(Y))\not=\{0\}.$ Then there are unital \hm s from
$C(\T)\to C(Y, M_n)$ for some $n\ge 2$ which can not be
approximately diagonalized.
\end{prop}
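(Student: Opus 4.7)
The plan is to produce a unitary in $M_n(C(Y))$ whose class in $K_1(C(Y))$ is non-trivial, interpret it as a unital homomorphism from $C(\T)$, and then show that any approximate diagonalization would force this class to be zero, via the hypothesis $\pi^1(Y)=0$. Concretely, since $K_1(C(Y))\neq 0$ and $K_1$ is computed as a limit over $M_n$, I can choose an integer $n\geq 2$ (enlarging by direct summing with $1$ if necessary) and a unitary $u\in M_n(C(Y))$ with $[u]\neq 0$. I then define $\phi: C(\T)\to C(Y,M_n)$ by $\phi(f)=f(u)$ using continuous functional calculus, so that $\phi(z)=u$ where $z$ is the identity function on $\T$.

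Next I suppose, for contradiction, that $\phi$ admits approximate diagonalization. Applied with $\mathcal{F}=\{z\}$ and $\epsilon=1$, this produces continuous maps $\af_1,\ldots,\af_n:Y\to\T$ and a set of mutually orthogonal rank one projections $p_1,\ldots,p_n\in C(Y,M_n)$ (so $\sum_i p_i=1$) with $\|u-v\|<1$, where $v=\sum_{i=1}^n \af_i p_i$. The crucial use of the hypothesis $\pi^1(Y)=0$ is now to lift each $\af_i$ through the universal covering $\R\to\T$: since $\af_i:Y\to\T$ is null-homotopic, it admits a continuous real-valued logarithm $h_i\in C(Y,\R)$ with $\af_i=\exp(2\pi\sqrt{-1}\,h_i)$. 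Because the projections $p_i$ are mutually orthogonal and sum to the identity, the self-adjoint element $H=\sum_{i=1}^n h_i p_i\in C(Y,M_n)$ satisfies
\[
\exp(2\pi\sqrt{-1}\,H)=\sum_{i=1}^n \exp(2\pi\sqrt{-1}\,h_i)p_i=\sum_{i=1}^n \af_i p_i=v.
\]
Thus $v$ lies in the connected component of $1$ in the unitary group of $C(Y,M_n)$, which gives $[v]=0$ in $K_1(C(Y,M_n))\cong K_1(C(Y))$.

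Finally, since $\|u-v\|<1<2$, the unitary $uv^*$ has spectrum bounded away from $-1$ and hence admits a continuous logarithm, so $u$ and $v$ lie in the same connected component of the unitary group. Therefore $[u]=[v]=0$ in $K_1(C(Y))$, contradicting the choice of $u$. This contradiction shows $\phi$ cannot be approximately diagonalized, proving the proposition. The main conceptual step is the logarithm lifting of each $\af_i$ together with the crucial fact that mutual orthogonality of the $p_i$ allows the separate exponentials to be assembled into a single exponential $\exp(2\pi\sqrt{-1}\,H)$; without the hypothesis $\pi^1(Y)=0$, this step would fail, and indeed approximate diagonalizations of this type naturally produce only $K_1$-trivial unitaries.
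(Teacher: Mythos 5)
Your proposal is correct and follows essentially the same route as the paper: pick a unitary $u$ with nontrivial $K_1$-class, define $\phi(f)=f(u)$, and show that a diagonal approximant $v=\sum_i\af_i p_i$ must lie in $U_0(C(Y,M_n))$ because $\pi^1(Y)=0$, so $\|u-v\|<1$ would force $[u]=0$. The only cosmetic difference is in how the $\pi^1(Y)=0$ hypothesis is invoked: the paper directly takes a path of unitaries $w_k(t)$ in $C(Y)$ from $\af_k$ to $1$ and forms $U(t)=\sum_k w_k(t)p_k$, whereas you lift each $\af_k$ through $\R\to\T$ to a continuous logarithm $h_k$ and exhibit $v=\exp(2\pi\sqrt{-1}\,H)$ with $H=\sum_k h_kp_k$; these are the same argument.
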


\begin{proof}
Since $K_1(C(Y))\not=\{0\},$ there is an integer $n\ge 2$ and a
unitary $u\in C(Y, M_n)$  such that $u\not\in U_0(C(Y, M_n)).$
Define a unital \hm\, $\phi: C(\T)\to C(Y, M_n)$ by $\phi(f)=f(u)$
for all $f\in C(\T).$ Suppose that there are continuous maps $\af_k:
Y\to \T,$ $k=1,2,...,n$ and mutually orthogonal rank one projections
$\{p_1, p_2,...,p_n\}\subset C(Y, M_n)$ such that
\beq\label{PI1-1}
\|\phi(z)-\sum_{k=1}^n z(\af_k)p_k\|<1,
\eneq
where $z$ is the identity function on the unit circle $\T.$  Note
that $u=\phi(z).$
Since $\pi^1(Y)=\{0\},$ for each $k,$ there is a continuous path of
unitaries $\{w_k(t): t\in [0,1]\}\subset C(Y)$ such that
$$
w_k(0)=z(\af_k)\andeqn w_k(1)=1.
$$
One defines a continuous path of unitaries $\{U(t): t\in
[0,1]\}\subset U(C(Y, M_n))$ by
$$
U(t)=\sum_{k=1}^n w_k(t)p_k\,\,\,{\rm for}\,\,\, t\in [0,1].
$$
Then $U(0)=\sum_{k=1}^n z(\af_k)p_k$ and $U(1)=1_{M_n}.$ So
$\sum_{i=1}^n  z(\af_k)p_k\in U_0(C(Y, M_n)).$ By (\ref{PI1-1}),
$u\in U_0(C(Y, M_n)).$ A contradiction.

\end{proof}

\begin{cor}\label{C2}
There is a unital \hm\, $\phi: C(\T)\to C(S^3, M_2)$ such that
$\phi$ can not be approximately diagonalized.
\end{cor}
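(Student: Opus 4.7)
The plan is to derive this as a direct application of Proposition \ref{PI1} with $Y=S^3$, and then to verify that one can take $n=2$ rather than merely \emph{some} $n\ge 2$.

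First I would check the two hypotheses of Proposition \ref{PI1} for $Y=S^3$. The group $\pi^1(S^3)=[S^3,\T]$ vanishes because $S^3$ is simply connected and $\T$ has contractible universal cover $\R$, so every continuous map $S^3\to \T$ lifts to $\R$ and is therefore null-homotopic. For $K$-theory, I would compute $K_1(C(S^3))=K^1(S^3)\cong \Z$ by the standard suspension identification $K^1(S^3)=\tilde K^0(S^2)\cong \Z$. Thus the hypotheses of Proposition \ref{PI1} are satisfied and the proposition already yields a non-approximately-diagonalizable unital homomorphism $C(\T)\to C(S^3,M_n)$ for some $n\ge 2$.

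The one remaining point is to realize this in $M_2$ rather than some larger matrix size. For that, I would exhibit an explicit unitary $u\in C(S^3,M_2)$ representing a nontrivial class in $K_1(C(S^3))$. The natural choice is the identification $S^3\cong SU(2)$ composed with the inclusion $SU(2)\hookrightarrow U(2)$; this map $u:S^3\to U(2)$ represents a generator of $\pi_3(U(2))\cong \pi_3(SU(2))=\pi_3(S^3)=\Z$, and hence a nontrivial element of $K_1(C(S^3))$. In particular $u\notin U_0(C(S^3,M_2))$. Defining $\phi:C(\T)\to C(S^3,M_2)$ by $\phi(f)=f(u)$, the argument of Proposition \ref{PI1} applies verbatim.

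I do not anticipate any serious obstacle here: the argument is essentially the one already written for Proposition \ref{PI1}, and the only new input is the identification of a generator of $K_1$ that is representable in $M_2$. The only thing to be slightly careful about is the distinction between $K_1(C(Y))$ stabilized and $U(M_2(C(Y)))/U_0$; the fact that $\pi_3(U(2))\to \pi_3(U(\infty))$ is surjective (indeed an isomorphism, since $U/U(2)$ is $4$-connected) is what makes the $n=2$ case work without stabilization.
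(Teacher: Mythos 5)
Your proposal is correct and essentially reproduces the paper's argument: the explicit unitary $u(z,w)=\begin{pmatrix} z & -\bar w\\ w & \bar z\end{pmatrix}$ that the paper writes down is precisely the $S^3\cong SU(2)\hookrightarrow U(2)$ map you describe, and both proofs then invoke the mechanism of Proposition \ref{PI1} together with $\pi^1(S^3)=\{0\}$. You supply somewhat more detail than the paper does on why $u\notin U_0(M_2(C(S^3)))$ (via $\pi_3(U(2))\cong\pi_3(U)\cong\Z$), but the route is the same.
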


\begin{proof}
Let
$$
u(z, w)=\begin{pmatrix} z & -{\bar w}\\
                                    w & {\bar z},
                                    \end{pmatrix}
                                    $$
                                    where $(z, w)\in S^3=\{(z, w)\in \C^2: |z|^2+|w|^2=1\}.$
                                    Then $u\in
                                    U(M_2(C(S^3))\setminus
                                    U_0(M_2(C(S^3))).$
 However, $\pi^1(S^3)=\{0\}.$ Thus, as in the proof of \ref{PI1}, this unitary can not be approximated
 by unitaries which are diagonalized.

\end{proof}

\begin{cor}\label{C3}
Let $X$ be a  finite CW complex  with ${\rm dim} X\ge 2$ and let $Y$ be a compact
metric space for which $\pi^1(Y)$ is trivial but $K_1(C(Y))$ is not trivial. Then
there are unital \hm\, $\phi: C(X)\to C(Y, M_n)$ for some $n\ge 2$ which can not be approximately diagonalized.
\end{cor}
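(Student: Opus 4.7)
The plan is to reduce Corollary~\ref{C3} to Proposition~\ref{PI1} by embedding a circle into $X$ and pulling back the resulting $K_1$-obstruction. Since $X$ is a finite CW complex with $\dim X\ge 2$, it has at least one open $2$-cell; that cell is an open subset of $X$ homeomorphic to $\R^2$, and a small round circle inside it provides a closed embedding $\iota:\T\hookrightarrow X$, hence a surjective unital \hm\ $\iota^*:C(X)\to C(\T)$, $\iota^*(f)=f\circ\iota$. Using $K_1(C(Y))\ne\{0\}$, as in the proof of Proposition~\ref{PI1}, fix an integer $n\ge 2$ and a unitary $u\in C(Y,M_n)$ with $u\notin U_0(C(Y,M_n))$. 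Since $\mathrm{sp}(u)\subset\T\subset X$, functional calculus produces
\[
\phi:C(X)\to C(Y,M_n),\qquad \phi(f)=(f\circ\iota)(u),
\]
and this is the \hm\ that I claim is not approximately diagonalizable.

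For the contradiction, use Tietze extension to pick $\tilde z\in C(X)$ with $\|\tilde z\|\le 1$ and $\tilde z\circ\iota=z$, where $z$ is the identity function on $\T$, so that $\phi(\tilde z)=u$. If $\phi$ were approximately diagonalizable, then for $\ep\in(0,1/4)$ and the singleton $\{\tilde z\}$ there would exist continuous maps $\af_1,\dots,\af_n:Y\to X$ and mutually orthogonal rank-one projections $p_1,\dots,p_n\in C(Y,M_n)$ with $\bigl\|u-\sum_{i=1}^n\tilde z(\af_i)p_i\bigr\|<\ep$.

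The key step is to replace the scalar-valued approximant by an honest unitary close to $u$. At each $y\in Y$ the matrix $\sum_i\tilde z(\af_i(y))p_i(y)$ is normal with eigenvalues $\tilde z(\af_i(y))$, while $u(y)$ is unitary with spectrum on $\T$; since the two matrices differ in norm by less than $\ep$, the spectral comparison for normal matrices forces each $\tilde z(\af_i(y))$ to lie within $\ep$ of $\T$. Combined with $|\tilde z|\le 1$ this yields $|\tilde z(\af_i(y))|\in(1-\ep,1]$ for all $i,y$, so $\tilde f_i:=\tilde z(\af_i)/|\tilde z(\af_i)|\in C(Y,\T)$ is well defined and satisfies $\|\tilde f_i-\tilde z(\af_i)\|<\ep$. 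Hence $V:=\sum_i\tilde f_i p_i$ is a genuine unitary in $C(Y,M_n)$ with $\|V-u\|<2\ep<1/2$.

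The finish mirrors the proof of Proposition~\ref{PI1}: since $\pi^1(Y)=\{0\}$, each $\tilde f_i$ lies in $U_0(C(Y))$, and continuous paths $w_i(t)$ from $\tilde f_i$ to $1$ in $U(C(Y))$ combine into $W(t):=\sum_i w_i(t)p_i\in U(C(Y,M_n))$ joining $V$ to $1$; this places $V\in U_0(C(Y,M_n))$, and then $\|V-u\|<1/2$ forces $u\in U_0(C(Y,M_n))$, contradicting the choice of $u$. The only non-formal step is the unitarization of the scalar approximant, which rests on the normal-matrix eigenvalue comparison (and on the global bound $\|\tilde z\|\le 1$); producing the circle $\T\hookrightarrow X$ from the CW hypothesis and the final invocation of the argument of Proposition~\ref{PI1} are routine.
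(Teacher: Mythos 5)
Your proof is correct and follows the same route as the paper's: embed a circle $\T$ in a $2$-cell of $X$ and reduce to Proposition~\ref{PI1}. Your write-up is, however, more careful than the paper's one-line argument, which simply asserts that $\phi\circ s$ cannot be approximately diagonalized because $\phi$ cannot, without addressing the fact that an approximate diagonalization of $\phi\circ s$ produces $\C$-valued (rather than $\T$-valued) eigenvalue functions $\tilde z(\af_i)$; your unitarization step---combining the Tietze bound $\|\tilde z\|\le 1$ with the spectral comparison of Lemma~\ref{SP} to force $|\tilde z(\af_i)|$ near $1$, and then normalizing to obtain a genuine unitary $V\in U_0(C(Y,M_n))$ close to $u$---is precisely the detail needed to transplant the argument of Proposition~\ref{PI1}, and is left implicit in the paper.
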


\begin{proof}
$X$ contains a subset $D$ which is homeomorphic to the unit disk.
Thus $X$ contains a subset $S$ which is homeomorphic to $\T.$ Define
$s: C(X)\to C(\T)$ by the restriction on $S$ and then take the
homeomorphism. If $\phi: C(\T)\to C(Y, M_n)$ is one of those unital
\hm s which can not be diagonalized (by \ref{PI1}), then $\phi\circ
s$ can not be diagonalized.

\end{proof}

\begin{lem}\label{SP}
For any $d>0,$ if $0<\dt<d,$ and if $a$ and $b$ are two normal
elements in a unital  \CA\,  such that
$$
\|a-b\|<\dt,
$$
then
$$
sp(b)\subset \{\lambda\in \C: {\rm dist}(\lambda, sp(a))<d\}.
$$

\end{lem}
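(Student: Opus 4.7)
The plan is to prove the contrapositive: if $\lambda\in\C$ satisfies ${\rm dist}(\lambda, sp(a))\ge d,$ then $\lambda\notin sp(b).$ Fix such a $\lambda.$ Since $a$ is normal, functional calculus for normal elements gives the standard resolvent identity
$$
\|(a-\lambda\cdot 1)^{-1}\|=\frac{1}{{\rm dist}(\lambda, sp(a))}\le \frac{1}{d},
$$
so in particular $(a-\lambda\cdot 1)$ is invertible.

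Next I would factor
$$
b-\lambda\cdot 1=(a-\lambda\cdot 1)+(b-a)=(a-\lambda\cdot 1)\bigl(1+(a-\lambda\cdot 1)^{-1}(b-a)\bigr).
$$
The norm of the perturbation satisfies
$$
\|(a-\lambda\cdot 1)^{-1}(b-a)\|\le \frac{1}{d}\cdot \|b-a\|<\frac{\dt}{d}<1,
$$
since $\dt<d$ by hypothesis. Therefore $1+(a-\lambda\cdot 1)^{-1}(b-a)$ is invertible by the Neumann series, hence $b-\lambda\cdot 1$ is invertible as a product of invertibles. Thus $\lambda\notin sp(b),$ which is exactly what we needed.

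No step is really an obstacle here; the only point worth flagging is that normality of $a$ is used solely to get the sharp resolvent bound $\|(a-\lambda)^{-1}\|=1/{\rm dist}(\lambda,sp(a)),$ which fails for general elements. Normality of $b$ is in fact not used for the stated inclusion, though it is the natural context of the lemma.
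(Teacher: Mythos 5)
Your proof is correct, and it is exactly the standard argument the paper alludes to (the paper gives no proof, saying only that it is ``standard and known''). The decisive fact is the sharp resolvent estimate $\|(a-\lambda)^{-1}\|=1/\operatorname{dist}(\lambda,sp(a))$ valid for normal $a$, combined with the Neumann-series perturbation argument; this delivers the uniformity in $a$ and $b$ that the paper emphasizes, since the bound $\dt<d$ alone forces $\lambda\notin sp(b)$. Your remark that normality of $b$ is not actually used is also accurate.
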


The proof is standard and known. The point here is that $\dt$ does
not depend on $a$ and $b.$

\vspace{0.2in}

\begin{lem}\label{SP2}
Let $Y$ be a compact metric space and let $g\in C(Y, M_2)$ be a
normal element for which
$sp(g(y))=\{\lambda(y),\overline{\lambda(y)}\}$ for each $y\in Y.$
For any $\ep>0,$ if $\af_1, \af_2: Y\to \C$ are two continuous maps
and if $p_1, p_2\in C(Y, M_2)$ are mutually orthogonal rank one
projections such that
\beq\label{SP2-1}
\|g-(\af_1p_1+\af_2p_2)\|<\ep/8,
\eneq
then
\beq\label{SP2-2}
\|g-(\af_1p_1+{\overline{\af_1}}p_2)\|<\ep.
\eneq
\end{lem}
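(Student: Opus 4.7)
The plan is to work pointwise on $Y$ and use Lemma \ref{SP} to control the spectra of $g(y)$ and of the diagonal approximant $b(y):=\alpha_1(y)p_1(y)+\alpha_2(y)p_2(y)$. Set $\delta_0=\|g-(\alpha_1 p_1+\alpha_2 p_2)\|<\varepsilon/8$; then for every $y\in Y$ the elements $g(y),b(y)\in M_2$ are normal with $\|g(y)-b(y)\|\le \delta_0$, $sp(g(y))=\{\lambda(y),\overline{\lambda(y)}\}$, and $sp(b(y))=\{\alpha_1(y),\alpha_2(y)\}$. By Lemma \ref{SP} (applied in both directions), every point of one spectrum lies within distance $\delta_0<\varepsilon/8$ of the other.

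The crux is to show the pointwise estimate $|\overline{\alpha_1(y)}-\alpha_2(y)|<\varepsilon/2$ for all $y$. I would argue by cases. First, each of $\alpha_1(y),\alpha_2(y)$ is within $\varepsilon/8$ of $\lambda(y)$ or of $\overline{\lambda(y)}$. In the ``opposite'' case, say $|\alpha_1(y)-\lambda(y)|<\varepsilon/8$ and $|\alpha_2(y)-\overline{\lambda(y)}|<\varepsilon/8$ (or the symmetric case with $\lambda$ and $\overline{\lambda}$ swapped), the triangle inequality gives
\[
|\overline{\alpha_1(y)}-\alpha_2(y)|\le|\alpha_1(y)-\lambda(y)|+|\overline{\lambda(y)}-\alpha_2(y)|<\varepsilon/4.
\]
In the ``same-side'' case, say both $\alpha_1(y)$ and $\alpha_2(y)$ are within $\varepsilon/8$ of $\lambda(y)$, the reverse direction of Lemma \ref{SP} forces $\overline{\lambda(y)}$ to lie within $\varepsilon/8$ of some $\alpha_i(y)$, and hence within $2\cdot\varepsilon/8=\varepsilon/4$ of $\lambda(y)$; in other words $\lambda(y)$ is nearly real. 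Applying the triangle inequality once more,
\[
|\overline{\alpha_1(y)}-\alpha_2(y)|\le|\overline{\alpha_1(y)}-\overline{\lambda(y)}|+|\overline{\lambda(y)}-\lambda(y)|+|\lambda(y)-\alpha_2(y)|<\varepsilon/8+\varepsilon/4+\varepsilon/8=\varepsilon/2.
\]

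Once the pointwise inequality $|\overline{\alpha_1(y)}-\alpha_2(y)|<\varepsilon/2$ is established for every $y$, the result follows by taking suprema and applying the triangle inequality together with $\|p_2\|\le 1$:
\[
\|g-(\alpha_1 p_1+\overline{\alpha_1}p_2)\|\le \|g-(\alpha_1p_1+\alpha_2p_2)\|+\|(\alpha_2-\overline{\alpha_1})p_2\|<\varepsilon/8+\varepsilon/2<\varepsilon.
\]
The main obstacle, modest as it is, is the ``same-side'' subcase: one has to recognize that an apparent mismatch between $\{\alpha_1(y),\alpha_2(y)\}$ and $\{\lambda(y),\overline{\lambda(y)}\}$ cannot occur unless $\lambda(y)$ is already nearly real, in which case the replacement of $\alpha_2$ by $\overline{\alpha_1}$ is harmless. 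No continuity or connectedness hypothesis on $Y$ enters, because the argument is entirely pointwise and uniform in $y$.
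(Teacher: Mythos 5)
Your proof is correct and follows the same basic strategy as the paper: apply Lemma \ref{SP} pointwise to compare $sp(g(y))=\{\lambda(y),\overline{\lambda(y)}\}$ with $sp(\alpha_1(y)p_1(y)+\alpha_2(y)p_2(y))=\{\alpha_1(y),\alpha_2(y)\}$, deduce $|\overline{\alpha_1(y)}-\alpha_2(y)|$ is small, and finish by the triangle inequality since $p_1+p_2=1$. The one place you go beyond the paper is the explicit treatment of the ``same-side'' subcase where $\alpha_1(y)$ and $\alpha_2(y)$ both lie near $\lambda(y)$ (and neither near $\overline{\lambda(y)}$); the paper's text passes directly from ``$\alpha_1(y)$ is near $\lambda(y)$ or $\overline{\lambda(y)}$'' and ``$\alpha_2(y)$ is near $\overline{\lambda(y)}$ or $\lambda(y)$'' to $|\alpha_2(y)-\overline{\alpha_1(y)}|<2\epsilon/7$ without remarking that the disjunctions could both resolve to the same eigenvalue. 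That subcase genuinely occurs (for instance whenever $\lambda(y)$ is real, so $g(y)$ is a scalar there), and your observation that it forces $|\lambda(y)-\overline{\lambda(y)}|$ to be small, making the replacement of $\alpha_2$ by $\overline{\alpha_1}$ harmless, is exactly the right way to close it. Your constants are a bit looser ($\epsilon/2$ versus $2\epsilon/7$ in the clean case) but still well within budget, since $\epsilon/8+\epsilon/2<\epsilon$.
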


\begin{proof}
Suppose that (\ref{SP2-1}) holds. It follows from \ref{SP} that, for
each $y\in Y,$
\beq\label{SP2-3}
|\lambda(y)-\af_1(y)|<\ep/7\,\,\,{\rm or}\,\,\,
|\overline{\lambda(y)}-\af_1(y)|<\ep/7
\eneq
Then
\beq\label{SP2-4}
|{\overline{\lambda(y)}}-\af_2(y)\|<\ep/7\,\,\,{\rm
or}\,\,\,|\lambda(y)-\af_2(y)|<\ep/7.
\eneq
It follows that
\beq\label{SP2-5}
|\af_2(y)-\overline{\af_1(y)}|<2\ep/7
\eneq
for all $y\in Y.$ Therefore
\beq\label{SP2-6}
\|g-(\af_1p_1+\overline{\af_1}p_2)\|<\ep.
\eneq

\end{proof}

By modifying 4.4 of \cite{GP}, one has the following:

\begin{lem}\label{Count1}
Let $Y$ be a finite CW complex  with ${\rm dim} Y>3.$ Then there is
a self-adjoint element  $b\in C(Y, M_2)$  with $sp(b)=[-1,1]$ which
can not be approximately  diagonalized.
\end{lem}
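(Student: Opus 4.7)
The plan is to adapt the topological obstruction in \cite{GP} (Proposition~4.4) from exact to approximate diagonalization, combining spectral stability (Lemma~\ref{SP}) with the classification of complex line bundles on contractible spaces. Since ${\rm dim}\,Y>3$, $Y$ carries a $4$-cell $e^4$ with characteristic map $\Phi:D^4\to Y$ restricting to a homeomorphism on the interior. Fix a tame unknot $K\subset{\rm int}(D^4)$. A Mayer--Vietoris computation, together with a codimension-$3$ argument for $\pi_1$, shows that $M:={\rm int}(D^4)\setminus K$ is simply connected with $H^2(M;\Z)\cong\Z$ and $H^4(M;\Z)=0$; hence there is a continuous map $f:M\to S^2$ whose pullback of the generator of $H^2(S^2)$ is the generator of $H^2(M;\Z)$ (the obstruction to extend such $f$ from the $3$-skeleton of $M$ is empty). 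Choose also a continuous cut-off $r:D^4\to[0,1]$ vanishing on $\partial D^4\cup K$, equal to $1$ on a closed neighborhood $V$ of the ``middle'' of $M$ chosen so that $V\hookrightarrow M$ is a homotopy equivalence, with $\max r=1$. Writing $\sigma_1,\sigma_2,\sigma_3$ for the Pauli matrices, define $b\in C(Y,M_2)$ by $b(\Phi(y))=\sum_{i=1}^3 r(y)f_i(y)\sigma_i$ for $y\in D^4$ and $b\equiv0$ on $Y\setminus\Phi({\rm int}\,D^4)$. This is self-adjoint, continuous (as $b|_{\partial e^4}=0$), and has $sp(b)=[-1,1]$ because $sp(b(y))=\{\pm r(y)\}$ and $r$ exhausts $[0,1]$.

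\textbf{Spectral stability.} Suppose for contradiction that $\|b-(\alpha_1 p_1+\alpha_2 p_2)\|<\ep$ for some $\alpha_i\in C(Y,\R)$ and mutually orthogonal rank-one projections $p_1,p_2\in C(Y,M_2)$ with $p_1+p_2=1$. Let $U=\{y\in Y:r(y)>1/2\}$; by construction $U\subset{\rm int}(e^4)$ and the inclusion $U\hookrightarrow M$ is a homotopy equivalence, so $H^2(U;\Z)\cong\Z$. On $U$ the spectral gap of $b(y)$ exceeds~$1$, so by Lemma~\ref{SP} and a labeling argument on the connected open set $U$ we may arrange $\alpha_1>0>\alpha_2$ on $U$ with $|\alpha_1-r|,|\alpha_2+r|<\ep$. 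The Riesz-projection formula for a small contour around $+r(y)$ then yields $\|p_1|_U-P_+|_U\|\le C\ep$, where $P_+(y)$ denotes the $+r(y)$-spectral projection of $b(y)$. For $\ep$ sufficiently small this distance is below~$1$, and hence $p_1|_U$ and $P_+|_U$ are unitarily equivalent in $C(U,M_2)$, defining isomorphic complex line bundles on $U$.

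\textbf{Topological contradiction; main obstacle.} The line bundle $L_{p_1}$ associated to $p_1$ is a subbundle of the trivial bundle $Y\times\C^2$; restricted to the contractible open set ${\rm int}(e^4)\supset U$, it is trivial, hence $c_1(L_{p_1}|_U)=0\in H^2(U;\Z)$. On the other hand $L_{P_+}|_U$ is classified by $f|_U$ and its first Chern class is a generator of $H^2(U;\Z)$ by construction. The two line bundles are therefore not isomorphic, contradicting spectral stability. The principal technical step is arranging $r$ and $f$ so that $U=\{r>1/2\}$ genuinely detects the nontrivial $H^2$-class of $M$; a tubular-thickening cut-off of $K\cup\partial D^4$ does this. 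The rest combines triviality of complex line bundles over contractible spaces with the standard $\ep$-stability of spectral projections, both routine.
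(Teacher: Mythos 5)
Your argument is correct in essence, but it follows a route that is genuinely different from the one in the paper, so let me compare them and flag the places where you need to tighten the reasoning.

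The paper's proof works on a subspace $Y_0\subset Y$ homeomorphic to $S^3$ and exploits the \emph{unitary} obstruction: the standard $SU(2)$-valued unitary $u$ on $S^3$ lies outside $U_0(C(S^3,M_2))$, while $\pi^1(S^3)=0$ forces every diagonal unitary $\sum z(\af_k)p_k$ to lie in $U_0$. It then defines a normal element $g=f\cdot U$ via a cut-off $f$ (an extension of $u$ to an ANR neighborhood) and reduces to the self-adjoint case by observing that $g+g^*$ is a \emph{scalar} (because $u(y)\in SU(2)$, so $u+u^*=\operatorname{tr}(u)\cdot 1$), so an approximate diagonalization of $b=(g-g^*)/2\sqrt{-1}$ would yield one of $g$. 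Your proof instead goes straight to a self-adjoint $b$ by working on $M=\operatorname{int}(D^4)\setminus K$ (an unknot complement) and uses the \emph{Chern class} obstruction: the positive spectral projection of $b$ pulls back the nontrivial generator of $H^2(M;\Z)$ via the map $f\colon M\to S^2$, whereas any candidate $p_1$ from a diagonalization is the restriction of a line bundle over the contractible $\operatorname{int}(D^4)$ and is therefore trivial on $U=\{r>1/2\}$. Both obstructions are legitimate; yours avoids the passage through a normal element but requires more topology of the unknot complement, while the paper's argument is topologically lighter at the cost of the $SU(2)$ trick.

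Two points need care. First, your justification for the existence of $f\colon M\to S^2$ hitting the generator of $H^2(M;\Z)$ only addresses the obstruction in $H^4(M;\pi_3(S^2))$; but $H^3(M;\Z)\cong\Z$ (by Mayer--Vietoris one gets $H_2(M)\cong H_3(M)\cong\Z$), so the obstruction to extending over the $3$-skeleton is a priori nontrivial and must also be dealt with. The correct (and equally short) argument is that $M$, being a noncompact $4$-manifold, has the homotopy type of a CW complex of dimension $\le 3$, so the classifying map $M\to \C P^\infty=K(\Z,2)$ can be compressed into the $3$-skeleton $(\C P^\infty)^{(3)}=S^2$; alternatively one checks directly that $M\simeq S^2\vee S^3$ and takes the projection. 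Second, ``$\dim Y>3$'' does not by itself yield a $4$-cell in $Y$ (all cells could have dimension $\ne 4$); you should instead, as the paper does, fix a topological $4$-ball $D^4$ embedded inside the interior of a top-dimensional cell, after which your argument goes through verbatim. Finally, you assert that $U\hookrightarrow M$ is a homotopy equivalence; what you actually use and can more easily justify is that $H^2(M;\Z)\to H^2(U;\Z)$ is injective, which follows from $V\subset U\subset M$ and $V\hookrightarrow M$ being an equivalence, and is enough to conclude that $c_1(L_{P_+}|_U)\ne 0$.
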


\begin{proof}
Since $Y$ is a finite CW complex with ${\rm dim}Y>3,$ it contains a
4-dimensional cube. Therefore there is a subset $Y_0\subset Y$ such
that $Y_0$ is homeomorphic to $S^3.$ Identifying $Y_0$ with $S^3,$
one obtains a unitary $u\in U(M_2(C(Y_0))\setminus U_0(M_2(C(Y_0))$
which has the form
$$
u(z, w)=\begin{pmatrix} z & -{\bar w}\\
                                    w & {\bar z},
                                    \end{pmatrix}
                                    $$
                                    where $(z, w)\in S^3=\{(z, w)\in \C^2: |z|^2+|w|^2=1\}.$

In fact, for every $y\in Y_0,$ $u(y)\in SU(2).$  Since $SU(2)$ is
absolute neighborhood retract, there is a neighborhood $Y_1$ of
$Y_0$ and a map $U\in C(Y_1, SU(2))$ which extends $u.$  Let $f\in
C(Y)$ be a function such that $0\le f(y)\le 1,$ $f(y)=1$ if
$y\in Y_0$ and $f(y)=0$ if $y\in Y\setminus Y_1.$ Define a normal
element $g\in C(Y, M_2)$ by $g(y)=f(y)U(y)$ if $y\in Y_1$ and
$g(y)=0$ if $y\in Y\setminus Y_1.$ Note that the eigenvalues of $g$
have the form $\lambda$ and $\bar \lambda,$ where
$|\lambda|^2=f(x)^2.$  Let $1/3> \dt>0.$ Suppose that there are
continuous maps $\af_1,\af_2: Y\to D,$ where $D$ is the unit disk,
and mutually orthogonal rank one projections $p_1, p_2\in C(Y, M_2)$
such that
\beq\label{count1-2}
\|g-\sum_{i=1}^2 \af_i p_i\|<\dt/8.
\eneq
It follows from \ref{SP2} that
\beq\label{count1-2+}
\|g-(\af_1p_1+{\overline{\af_1}}p_2)\|<\dt.
\eneq

 Let $\pi: C(Y, M_2)\to C(Y_0, M_n)$
be the quotient map. Then
\beq\label{count1-3}
\|u(y)-(\af_1(y)\pi(p_1)+{\overline{\af_1(y)}}\pi(p_2))\|<\dt\rforal
y\in Y_0.
\eneq
Since, for each $y\in Y_0,$ $u(y)$ has eigenvalues $\lambda(y)$ and
${\overline{\lambda(y)}}$ with $|\lambda(y)|=1,$ with small $\dt,$
one may assume that
$$
|\af_1(y)-\af_1(y)|\af_1(y)|^{-1}|<1/8\tforal y\in Y_0,
$$
$i=1,2.$ It follows (with a sufficiently small $\dt$) that
\beq\label{count1-4}
\|u(y)-(\bt_1(y)\pi(p_1)+\overline{\bt_1(y)}\pi(p_2))\|<1/2
\eneq
for all $y\in Y_0,$ where $\bt_1(y)=\af_1(y)|\af_1(y)|^{-1}.$ Since
$\bt_1$ maps $Y_0$ to $S^1,$ $Y_0$ is homeomorphic to $S^3$ and
since $\pi^1(S^3)=\{0\},$ there is a continuous path of $\{w(t):
t\in [0,1]\}\subset U(C(Y_0, M_2))$ such that
\beq\label{count1-5}
w(0)=\beta_1\andeqn  w(1)=1.
\eneq
Thus $\beta_1(y)\pi(p_1)+\overline{\beta_1(y)}\pi(p_2)\subset
U_0(C(Y_0, M_2)).$ From (\ref{count1-4}) and the fact that $u\not\in
U_0(C(Y_0, M_2)),$ this is impossible.

Therefore $g$ can not be approximately diagonalized. On the other
hand, $g(y)+g^*(y)=\gamma(y)1_{M_2},$ where
$\gamma(y)=\lambda(y)+\overline{\lambda(y)},$ for all $y\in Y.$ Let
$b=(1/2 i)(g-g^*).$ Then $b$ is self-adjoint. Suppose that there
were sequences $\af_{j,n}: Y\to \R$ and sequences of pairs
$\{p_{1,n},p_{2,n}\}$ of mutually orthogonal rank one projections in
$C(Y, M_2)$ such that
\beq\label{count1-6}
\lim_{n\to\infty}\|b-(\sum_{j=1}^2 \af_{j,n}p_{j,n})\|=0.
\eneq
Note that $p_{1,n}+p_{2,n}=1_{M_2}.$  Let
$\beta_{j,n}=\gamma+\sqrt{-1}\af_{j,n},$ $j=1,2$ and $n=1,2,....$ It would
imply that
\beq\label{count1-7}
\lim_{n\to\infty}\|g-\sum_{j=1}^2 \beta_{j,n}p_{j,n}\|=0.
\eneq
This contradicts what we have proved that $g$ can not be
approximately diagonalized.  Therefore one concludes that the
self-adjoint element $b$ can not be approximately diagonalized.

Finally, since $u\in U(C(S^3, M_2))\setminus U_0(C(S^3, M_2)),$
$sp(u)=\T.$ It follows that $sp(b)=[-1,1].$

\end{proof}

\begin{thm}\label{TCount}
Let $Y$ be a finite CW complex with ${\rm dim} Y>3$ and let $n\ge 2$
be an integer. Then, for any finite CW complex  $X$ with ${\rm dim}
X\ge 1,$ there exists a unital \hm\, $\phi: C(X)\to C(Y, M_n)$ which
can not be approximately diagonalized.
\end{thm}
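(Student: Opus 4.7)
The plan is to reduce to Lemma \ref{Count1}. First, since $\dim Y > 3$, replace $Y$ by a connected component of dimension at least $4$ so that we may assume $Y$ is connected. Lemma \ref{Count1} then furnishes a self-adjoint element $b \in C(Y, M_2)$ with $sp(b)=[-1,1]$ that cannot be approximately diagonalized. Since $X$ is a finite CW complex with $\dim X \ge 1$, it contains an embedded arc; fix an embedding $\iota:[-1,2]\hookrightarrow X$. Let $P_0\in M_n$ be the block projection onto the first two coordinates and view $b \in P_0\, M_n(C(Y))\, P_0$. Set $\tilde b := b + 2(1-P_0)\in C(Y,M_n)$, a self-adjoint element with $sp(\tilde b)\subset [-1,1]\cup\{2\}\subset \iota([-1,2])$, and define a unital \hm\, $\phi:C(X)\to C(Y,M_n)$ by functional calculus, $\phi(f):=(f\circ\iota)(\tilde b)$.

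Suppose, toward a contradiction, that $\phi$ can be approximately diagonalized. By the Tietze extension theorem, choose a real-valued $\xi\in C(X)$ with $\xi\circ\iota=\mathrm{id}_{[-1,2]}$, so $\phi(\xi)=\tilde b$. Then for any small $\epsilon>0$ there exist continuous maps $\alpha_1,\ldots,\alpha_n:Y\to X$ and mutually orthogonal rank-one projections $p_1,\ldots,p_n\in C(Y,M_n)$ with $\|\tilde b-\sum_{i=1}^n \beta_i p_i\|<\epsilon$, where $\beta_i:=\xi\circ\alpha_i:Y\to\R$. Pointwise, Weyl's eigenvalue inequality forces exactly $n-2$ of the values $\beta_i(y)$ into $[2-\epsilon,2+\epsilon]$ and the remaining two into $[-1-\epsilon,1+\epsilon]$. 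For $\epsilon<1/2$ these two intervals are disjoint, so continuity of each $\beta_i$ together with connectedness of $Y$ makes the partition $\{1,\ldots,n\}=I\sqcup J$ with $|J|=2$, $\beta_i(y)\in[2-\epsilon,2+\epsilon]$ for $i\in I$, and $\beta_j(y)\in[-1-\epsilon,1+\epsilon]$ for $j\in J$, independent of $y$.

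The key step is to compare the rank-$2$ projection $Q:=\sum_{j\in J}p_j$ with the trivial block projection $P_0$. Apply continuous functional calculus with a fixed Lipschitz cutoff $g$ satisfying $g\equiv 1$ on $[-2,\,3/2]$ and $g\equiv 0$ on $[7/4,\,5/2]$: then $g(\tilde b)=P_0$ while $g\bigl(\sum_i\beta_i p_i\bigr)=Q$, so $\|P_0-Q\|=O(\epsilon)$. For $\epsilon$ small enough, there is a unitary $v\in C(Y,M_n)$ with $\|v-1\|=O(\epsilon)$ and $v^*Qv=P_0$. The projections $\tilde p_j:=v^*p_jv$ for $j\in J$ are mutually orthogonal rank-one and sum to $P_0$, hence live in $P_0 C(Y,M_n)P_0\cong C(Y,M_2)$. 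Compressing the approximate identity $v^*\tilde b v\approx\sum_i\beta_i v^*p_iv$ by $P_0$ on both sides kills the $I$-terms (since $v^*p_iv\le 1-P_0$ for $i\in I$) and yields $b\approx\sum_{j\in J}\beta_j\tilde p_j$ in $C(Y,M_2)$. Truncating the real functions $\beta_j$ to $[-1,1]$ produces a genuine approximate diagonalization of $b$ inside $C(Y,M_2)$, contradicting Lemma \ref{Count1}.

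The main technical point is showing that the rank-$2$ projection $Q$ assembled from two of the pieces $p_j$ is close to the trivial block projection $P_0$; this is what permits the hypothetical diagonalization in $C(Y,M_n)$ to descend into the trivial subbundle $C(Y,M_2)\cong P_0 C(Y,M_n) P_0$ where Lemma \ref{Count1} applies. The whole reduction is engineered by placing the auxiliary eigenvalue at $2$ strictly outside $sp(b)=[-1,1]$, so that a single Lipschitz cutoff $g$ separates the two spectral regions and converts norm closeness of the elements into norm closeness of their spectral projections.
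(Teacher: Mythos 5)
Your proposal is correct and uses essentially the same strategy as the paper: split off an auxiliary eigenvalue isolated from $sp(b)=[-1,1]$, use the spectral gap to show any hypothetical approximate diagonalization must respect the rank-$2$/rank-$(n-2)$ decomposition up to a small unitary, compress to the trivial rank-$2$ corner $P_0 C(Y,M_n)P_0\cong C(Y,M_2)$, and contradict Lemma~\ref{Count1}. The only cosmetic differences are that the paper places the extra point to get the disconnected space $\Omega_1=[0,1/2]\cup\{1\}$ so that $f_0$ is literally a projection in $C(\Omega_1)$ and $\phi_1(f_0)$ is exactly $e_{11}+e_{22}$, whereas you put the auxiliary eigenvalue at $2$ inside the arc $[-1,2]$ and recover the spectral projection via a cutoff $g$ (one should either take $g$ smooth or invoke the Riesz projection over a fixed contour separating the two spectral clusters, rather than bare Lipschitzness, to get $\|P_0-Q\|=O(\ep)$, but that is a routine repair); you also handle $n=2$ as the degenerate case $I=\emptyset$ rather than separately.
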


\begin{proof}
$X$ contains a compact subset which is homeomorphic to $[-1,1].$
There is a surjective \hm\, $s: C(X)\to C([-1,1]).$ If $n=2,$ let
$b\in C(Y, M_2)$ with $sp(b)=[-1,1]$ be the self-adjoint element
given by \ref{Count1} which can not be approximated by self-adjoint
elements in $C(Y, M_2)$ which can be diagonalized. Define $\phi:
C(X)\to C(Y, M_2)$ by $\phi(f)=s(f)(b)$ for all $f\in C(X).$ Then
$\phi$ can not be approximated by \hm s which can be diagonalized.

If $n\ge 3,$ let $\gamma: [-1,1]\to [0, 1/2]$ be a homeomorphism.
Put $\Omega_1=[0, 1/2]\cup \{1\}.$  Let $\{e_{i,j}: 1\le i,j\le n\}$
be a system of matrix units. Define $\phi_1: C(\Omega_1)\to C(Y,
M_n)$ by
\beq\label{TC-1}
\phi_1(f)=f(b)(e_{11}+e_{22})+f(1)(\sum_{i=3}^n e_{ii})\tforal f\in
C(\Omega_1).
\eneq

Suppose that $\phi_1$ that there exist continuous maps $\af_{j,k}: X\to
\Omega_{1}$ ($j=1,2,...,n$ ) and mutually orthogonal rank one
projections $p_{1,k}, p_{2, k},...,p_{n,k},$ $k=1,2,...,$ such that
\beq\label{TC-3}
\lim_{k\to \infty}\|\phi_1(f)-\sum_{j=1}^n f(\af_{j,k})p_{j,k}\|=0
\eneq
for all $f\in C(\Omega_1).$ Denote $\psi_k(f)=\sum_{j=1}^n
f(\af_{j,k})p_{j,k}$ for all $f\in C(\Omega_1).$

Let $f_0\in C(\Omega_1)$ such that $f_0(t)=0$ if $t=1$ and
$f_0(t)=1$ if $t\in [0, 1/2].$ Then $f_0$ is a projection. It
follows that
\beq\label{TC-4}
\lim_{k\to\infty}\|\phi_1(f_0)-\psi_k(f_0)\|=0.
\eneq

Note that $\phi_1(f_0)=e_{11}+e_{22}.$ From (\ref{TC-4}), when $k$
is sufficiently large, there are unitaries $v_k\in C(Y, M_n)$ such
that
\beq\label{TC-5}
v_k^*\psi_k(f_0)v_k=e_{11}+e_{22} \andeqn \lim_{k\to
\infty}\|1-v_k\|=0.
\eneq
Since $\psi_(f_0)$ is a projection which commutes with $p_{j,k}$ and
$p_{j,k}$ is a rank one projection, it follows that
$\psi_k(f_0)p_{j,k}=p_{j,k}$ or $\psi_k(f_0)p_{j,k}=0.$  By
(\ref{TC-5}), $\psi_k(f_0)$ has rank 2 for all sufficient large $k.$
To simplify notation, one may assume that
$$
\psi_k(f_0)p_{j,k}=p_{j,k},\,\,\,j=1,2.
$$
Let $q_{j,k}=v_k^*\psi_k(f_0)v_k,$ $j=1,2,$ $k=1,2,....$ Note that
$q_{j,k}$ has rank one, $j=1,2.$ By (\ref{TC-3}), (\ref{TC-4}) and
(\ref{TC-5}),
\beq\label{TC-6}
\lim_{k\to\infty} \|\phi_1(ff_0)-\sum_{j=1}^2 f(\af_{j,k})q_{j,k}\|=0
\eneq
for all $f\in C(\Omega_1).$ This would imply that $b$ could be
approximated by diagonalizable self-adjoint elements. A
contradiction.

Therefore  $\phi_1$ can not be approximated by \hm s which are
diagonalizable. There is a surjective \hm\, $s_1: C(X)\to
C(\Omega_1).$ Define $\phi: C(X)\to C(Y, M_n)$ by
$\phi(f)=\phi_1(s_1(f))$ for all $f\in C(X).$ Then $\phi$ can not be
approximated by \hm s which can be diagonalized.

\end{proof}


\vspace{0.4in}

\noindent email: hlin@uoregon.edu
\end{document}